\newcommand*{\caL}{\ensuremath{\mathcal{L}}}		
\newcommand*{\caH}{\ensuremath{\mathcal{H}}}		
\newcommand*{\caG}{\ensuremath{\mathcal{G}}}		
\newcommand{\supp}{\mathrm{supp}}
\newcommand{\scc}{\mathrm{scc}}
\newcommand{\sccl}{\mathrm{sc}}
\newcommand{\gh}{\mathcal{G}^{\sigma}\mathcal{H}_{r,\rho}}
\newcommand{\R}{\mathbb{R}}
\newcommand{\N}{\mathbb{N}}
\newcommand{\Z}{\mathbb{Z}}
\newcommand{\T}{\mathbb{T}}
\newcommand{\cinf}{\mathscr{C}^{\infty}}
\newcommand{\Fc}{\mathscr{F}}
\newcommand{\C}{\mathbb{C}}
\newcommand{\Op}{\mathrm{Op}}
\def\Op{ {\operatorname{Op}} }
\newcommand{\condA}{$\mathrm{(}\mathscr{A}\mathrm{)}$}
\newtheorem{thm}{Theorem}[section]
\newtheorem{defn}[thm]{Definition}
\newtheorem{prop}[thm]{Proposition}
\newtheorem{lemma}[thm]{Lemma}
\newtheorem{cor}[thm]{Corollary}
\newtheorem{rem}[thm]{Remark}
\title[{\tiny{Time-periodic operators on asymptotically Euclidean manifolds}}]
{Global hypoellipticity\\for a class of time-periodic operators\\on asymptotically Euclidean manifolds}
\author[F. Avila Silva]{Fernando de \'Avila Silva}
\address{Department of Mathematics  \\ Federal University of Paran\'a \\ Curitiba, CEP 81531-980, Caixa Postal 19081, Paran\'a, Brazil}
\email{fernando.avila@ufpr.br}
\author[M. Bonino]{Matteo Bonino}
\address{Dipartimento di Matematica ``G. Peano'' \\Universit\`a di Torino\\
	Via Carlo Alberto 10\\
	10123 Torino\\
	Italy}
\email{matteo.bonino@unito.it}
\author[S. Coriasco]{Sandro Coriasco}
\address{Dipartimento di Matematica ``G. Peano'' \\Universit\`a di Torino\\
	Via Carlo Alberto 10\\
	10123 Torino\\
	Italy}
\email{sandro.coriasco@unito.it}
\begin{document}
	
	\begin{abstract}
		We introduce time-periodic Gevrey-Sobolev-Kato spaces on asymptotically Euclidean manifolds
		and study their characterisation throughout Fourier expansions associated with suitable elliptic operators. 
		As an application, we study the global hypoellipticity problem for a naturally associated 
		class of time-periodic evolution equations.
	\end{abstract}

	\keywords{Gevrey periodic functions; Asymptotically Euclidean manifolds; Weighted Sobolev spaces; Periodic equations; Global hypoellipticity; Fourier analysis}
	\subjclass[2010]{primary: 46F05, 35B10, 35B65, 35H10, 35S05}

	\maketitle

	\tableofcontents

\section{Introduction}\label{sec:intro}

Let $X$ be a $d$-dimensional asymptotically Euclidean manifold, that is, a smooth compact manifold $X$ with boundary, 
equipped with a Riemannian metric $g$ satisfying suitable hypotheses close to the boundary $\partial X$. We denote by $\rho_X$
a fixed \textit{boundary defining function} for $X$ (see \cite{ME} and Appendix \ref{app:scattering} for details). 
In this paper we introduce a family of Gevrey-Sobolev-Kato spaces on $\T^n\times X$, where $\T = \R /(2\pi \Z)$ is the $1$-dimensional torus, 
and study the global hypoellipticity properties of a naturally associated class of time-periodic evolution operators.

In short, the spaces under study are constructed as follows (see Section \ref{sec:spaces} below for details). 
Let $\dot{\mathscr{C}}^\infty(X)=\bigcap_{k\in\Z_+}\rho_X^k C^\infty(X)$ be the space of 
smooth functions on $X$ vanishing of infinite order at $\partial X$, and let $(\dot{\mathscr{C}}^\infty(X))'$ be its dual. For  $r, \rho \in \mathbb{R}$ consider 
$H^{r,\rho}(X)$, a Sobolev-Kato (or \textit{weighted Sobolev}) space on $X$ (see, e.g., \cite[p. 76]{ME}).
Then, for any fixed $\sigma > 1$  and $C > 0$, we study the spaces of all functions $u \in \mathscr{C}^{\infty}(\mathbb{T}^n; (\dot{\mathscr{C}}^\infty(X))')$ such that
\begin{equation}\label{eq:GHnorm_Intro}
	\sup_{\gamma \in \mathbb{Z}_+^n} \left\{ C^{-|\gamma|} (\gamma!)^{-\sigma} \sup_{t \in \mathbb{T}^n} \| \partial_t^{\gamma} u(t) \|_{H^{r,\rho}(X)} \right\} < \infty.
\end{equation}
We then consider suitable projective and inductive limits of such space families and of their duals.
 
The key tool in our analysis is the characterisation of these spaces in terms of a 
discretisation process, obtained by means of Fourier expansions generated by a suitable elliptic operator $P$ on $X$. 
Specifically, if $\{ \phi_j \}_{j \in \mathbb{N}^*} \subset \dot{\mathscr{C}}^\infty(X)$  is an associated orthonormal basis of eigenfunctions of $P$, 
corresponding to the eigenvalues $\{ \lambda_j \}_{j \in \mathbb{N}^*}$, we  expand elements  
$u \in \mathscr{C}^{\infty}(\mathbb{T}^n; (\dot{\mathscr{C}}^\infty(X))')$ in the series
\begin{equation}\label{FourierExpIntro_TxX}
	u(t) = \sum_{j \in \N^\ast} u_j(t) \phi_j,
\end{equation}
with coefficients $u_j$ belonging to some Gevrey distributions classes on $\T^n$.

Subsequently, such expansions are applied to the study of the global hypoellipticity of operators of the type 
\begin{equation}\label{eq:op-intro}
	\caL= D_t + c(t) P , \quad  D_t = -i\partial_t, t\in\T, 
\end{equation}
where $c\in\caG^s(\T)$ is a complex-valued Gevrey function on $\T$. Namely, it follows from \eqref{FourierExpIntro_TxX} that the equation $\caL u=f$ 
 is equivalent to  sequence of ordinary differential equations,
\begin{equation}\label{eq:odesintro}
	D_tu_j(t) + \lambda_j c(t) u_j(t) = f_j(t), \ t \in \T, j \in \N^\star.
\end{equation}
Hence, according to Theorem \ref{thm:reprF} and its corollaries below, 
we may characterise the regularity of $u$ by analyzing  the behaviour of the solutions $u_j(t)$  (and of their derivatives) as $j \to \infty$.

It is interesting to observe that the behavior of  $\lambda_j$ at infinity and its interactions with the constant
$$
c_0 = \dfrac{1}{2\pi} \int_{0}^{2\pi} c(t) dt
$$
appear as obstructions to the global hypoellipticity of $\caL$ (a phenomenon firstly observed by Hounie in \cite{Hou79}). This is connected with the so-called \textit{Diophantine approximations}, and is closely related to Liouville numbers, a widely explored concept in the study of global properties of operators on the 
torus, as presented, e.g., in \cite{BER,DGY,GW72,Pet05,GPR} and the references quoted therein.

It is worth noting that applications of Fourier expansions for the characterisations of functional spaces are widely employed in the literature. 
For instance, we recall  Seeley's papers \cite{See65,See69}, for the study of  smooth and analytic functions on vector bundles. 
In the case of Hilbert spaces and closed manifolds, we refer, e.g., to Delgado and Ruzhansky \cite{DR}, while for compact Lie groups 
we mention the work by Kirilov, Moraes and Ruzhansky in \cite{KMR2022,KMR2020}. 
On Euclidean spaces, in the context of Gelfand-Shilov classes, we may cite  Cappiello, Gramchev and Pilipovi\'c \cite{GPR,CGPR}. Expansions like \eqref{FourierExpIntro_TxX}  on the product of manifolds (compact or not) and their applications to the study of periodic evolution equation have been recently considered: in the case of $\T \times M$, $M$ a smooth closed manifold, by \'Avila, Gramchev, and Kirilov \cite{AGK18}; in the non-compact case $\T \times \R^d$,
by \'Avila and Cappiello \cite{AC,AC24}, in the setting of Gelfand-Shilov classes, and by \'Avila, Bonino and Coriasco \cite{ABC25}, in the setting of weighted spaces and Schwartz functions and distributions (see also Pedroso Kowacs \cite{PK}).

In this paper we extend, to the more general case $\T\times X$, $X$ an asymptotically Euclidean manifold, the results obtained in \cite{ABC25}, 
focused on the Gevrey-Sobolev-Kato classes on $\T^n\times \R^d$, as well as the similar Fourier analysis, there performed through expansions 
generated by suitable elliptic SG-operators on $\R^d$. Moreover, here we also start the study of the time-dependent coefficient operators 
\eqref{eq:op-intro}, in addition to the time-independent ones, analogous to those appearing in \cite{ABC25}. 

The paper is organized as follows. In Section \ref{sec:spaces} we illustrate the construction of the spaces we are interested in,
and give characterisation results in terms of Fourier expansions. Subsequently, in Section \ref{sec:gh} we study the global
hypoellipticity properties of operators of the form \eqref{eq:op-intro} in the constructed functional setting. For the convenience
of the reader, we have also included a short Appendix, where we recall some basic definitions and properties of the 
asymptotically Euclidean manifolds.

\section{Gevrey-Sobolev-Kato spaces on $\T^n \times X$}\label{sec:spaces}
\setcounter{equation}{0}


We present here the definitions and main properties of the class of Gevrey time-periodic Sobolev-Kato spaces on $\T^n\times X$,
where $X$ is a $d$-dimensional scattering manifold and $\T^n$ is the $n$-dimensional torus. 
By arguments involving admissible local charts and partitions of unity, the construction and the properties
of such spaces are similar to those of the Gevrey time-periodic Sobolev-Kato spaces on $\T^n\times\R^d$ introduced in \cite{ABC25}.

\subsection{Gevrey classes on the torus}

Let us begin by recalling the standard characterisation of the Gevrey classes on the $n$-dimensional torus $\mathbb{T}^n$. Given $\eta > 0$ and $\sigma \geq 1$, define $\mathcal{G}^{\sigma,\eta} = \mathcal{G}^{\sigma,\eta}(\mathbb{T}^n)$ as the space of all smooth functions $u \in \mathscr{C}^\infty(\mathbb{T}^n)$ such that there exists $C = C_{\sigma,\eta} > 0$ for which
$$
\sup_{t \in \mathbb{T}^n} |\partial_t^{\gamma} u(t)| \leq C \eta^{|\gamma|} (\gamma!)^\sigma, \quad \gamma \in \mathbb{Z}_+^n.
$$
This is a Banach space, endowed with the norm
$$
\|u\|_{\mathcal{G}^{\sigma,\eta}} = \sup_{\gamma \in \mathbb{Z}_+^n} \left\{ \sup_{t \in \mathbb{T}^n} \eta^{-|\gamma|} (\gamma!)^{-\sigma} |\partial_t^{\gamma} u(t)| \right\}.
$$
The space of periodic Gevrey functions of order $\sigma$ is then defined by
$$
\mathcal{G}^{\sigma}(\mathbb{T}^n) = \underset{\eta \rightarrow +\infty}{\mathrm{ind}\, \lim} \; \mathcal{G}^{\sigma,\eta}(\mathbb{T}^n).
$$
The dual space $(\mathcal{G}^{\sigma}(\mathbb{T}^n))'$ consists of all linear maps $\theta: \mathcal{G}^{\sigma}(\mathbb{T}^n) \to \mathbb{C}$ such that, for every $\eta > 0$, there exists $C = C_\eta > 0$ satisfying,
for all $u \in \mathcal{G}^{\sigma,\eta}(\T^n)$,
$$
|\langle \theta, u \rangle| \leq C \|u\|_{\mathcal{G}^{\sigma,\eta}}.
$$

\subsection{The spaces $\gh(\T^n \times X)$}

Let $X$ be a $d$-dimensional asymptotically Euclidean manifold. For any fixed $\sigma > 1$, $r, \rho \in \mathbb{R}$, and $C > 0$, we define
$$
\gh^{C} = \gh^{C}(\mathbb{T}^n \times X)
$$
as the space of all functions $u \in \mathscr{C}^{\infty}(\mathbb{T}^n; (\dot{\mathscr{C}}^\infty(X))')$ such that
\begin{equation}\label{eq:GHnorm}
\|u\|_{\gh^C(\mathbb{T}^n \times X)} = \|u\|_{\sigma,C,r,\rho} := 
\sup_{\gamma \in \mathbb{Z}_+^n} \left\{ C^{-|\gamma|} (\gamma!)^{-\sigma} \sup_{t \in \mathbb{T}^n} \| \partial_t^{\gamma} u(t) \|_{H^{r,\rho}(X)} \right\} < \infty.
\end{equation}
If $u \in \gh^{C}$, then:
\begin{itemize}
    \item for any $\gamma \in \mathbb{Z}_+^n$, the map $\mathbb{T}^n \ni t \mapsto \partial_t^{\gamma} u(t) \in H^{r,\rho}(X)$ is well-defined;
    \item the estimate $\sup_{t \in \mathbb{T}^n} \| \partial_t^\gamma u(t) \|_{H^{r,\rho}(X)} \leq C^{|\gamma|+1} (\gamma!)^\sigma$ holds true.
\end{itemize}

\begin{prop}
The spaces $\gh^C(\mathbb{T}^n \times X)$ are Banach spaces endowed with the norm 
$\left\| \cdot \right\|_{\sigma, r,\rho, C}$  given by \eqref{eq:GHnorm}. Moreover, the following inclusions holds true:

\begin{enumerate}
	\item For each $C>0$ and $\sigma \leq \mu$
	\begin{equation}\label{sig<tho}
			\gh^C \subset \mathcal{G}^{\mu}\mathcal{H}^C_{r,\rho};
	\end{equation}

	\item For each $\sigma > 1$ and $C\leq \tilde C$ we have
	\begin{equation}\label{CleqD}
		\gh^C\subset \mathcal{G}^{\sigma}\mathcal{H}^{\tilde C }_{r,\rho};
	\end{equation}
	
	\item For each $\sigma > 1$ and $C>0$ we obtain
	\begin{equation}\label{sig,C,sobolev}
		\mathcal{G}^{\sigma}\mathcal{H}^C_{r,\rho} \subset \mathcal{G}^{\sigma}\mathcal{H}^C_{t,\lambda},
	\end{equation}
	whenever $t \leq r$ and $\lambda \leq \rho$.

\end{enumerate}
\end{prop}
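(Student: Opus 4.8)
The plan is to establish the three assertions in sequence, starting from the Banach space property and then using monotonicity of the governing sup-norm with respect to the parameters $\sigma$, $C$, $r$, and $\rho$. First I would verify that $\|\cdot\|_{\sigma,C,r,\rho}$ is a genuine norm: homogeneity and the triangle inequality are immediate from the definition \eqref{eq:GHnorm} as a supremum of seminorms $u\mapsto C^{-|\gamma|}(\gamma!)^{-\sigma}\sup_{t}\|\partial_t^\gamma u(t)\|_{H^{r,\rho}(X)}$, each of which inherits subadditivity from the $H^{r,\rho}(X)$-norm; definiteness follows because the $\gamma=0$ term already forces $u(t)=0$ in $H^{r,\rho}(X)$ for all $t$, hence $u=0$ as an element of $\mathscr{C}^\infty(\mathbb{T}^n;(\dot{\mathscr{C}}^\infty(X))')$. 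For completeness, I would take a Cauchy sequence $(u_k)$ in $\gh^C$: for each fixed $\gamma$ the sequence $(\partial_t^\gamma u_k)$ is Cauchy in $\mathscr{C}(\mathbb{T}^n;H^{r,\rho}(X))$ (a Banach space), hence converges uniformly to some $v_\gamma$; standard interchange of limits and derivatives gives $v_\gamma=\partial_t^\gamma v_0$, so the limit $u:=v_0$ lies in $\mathscr{C}^\infty(\mathbb{T}^n;H^{r,\rho}(X))\subset\mathscr{C}^\infty(\mathbb{T}^n;(\dot{\mathscr{C}}^\infty(X))')$, and passing to the limit inside the supremum shows $u\in\gh^C$ with $u_k\to u$ in the norm.

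Next I would treat the three inclusions, which are all consequences of elementary monotonicity of the defining supremum. For \eqref{sig<tho}, when $\sigma\le\mu$ one has $(\gamma!)^{-\mu}\le(\gamma!)^{-\sigma}$ for every $\gamma\in\mathbb{Z}_+^n$, since $\gamma!\ge 1$; hence $\|u\|_{\mu,C,r,\rho}\le\|u\|_{\sigma,C,r,\rho}$ and the inclusion (with continuity of the embedding) follows. For \eqref{CleqD}, when $C\le\tilde C$ one has $\tilde C^{-|\gamma|}\le C^{-|\gamma|}$, so again $\|u\|_{\sigma,\tilde C,r,\rho}\le\|u\|_{\sigma,C,r,\rho}$. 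For \eqref{sig,C,sobolev}, the point is the continuous embedding $H^{r,\rho}(X)\hookrightarrow H^{t,\lambda}(X)$ whenever $t\le r$ and $\lambda\le\rho$: this is a standard property of the Sobolev-Kato scale on an asymptotically Euclidean manifold (cf. \cite[p. 76]{ME}), giving $\|\partial_t^\gamma u(t)\|_{H^{t,\lambda}(X)}\le c\,\|\partial_t^\gamma u(t)\|_{H^{r,\rho}(X)}$ with a constant $c$ independent of $\gamma$ and of $t\in\mathbb{T}^n$; taking suprema yields $\|u\|_{\sigma,C,t,\lambda}\le c\,\|u\|_{\sigma,C,r,\rho}$.

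I do not expect a serious obstacle here; the only point requiring a little care is the completeness argument, where one must justify that the componentwise uniform limits $v_\gamma$ assemble into a single smooth function with $v_\gamma=\partial_t^\gamma v_0$ — this is the classical theorem on differentiating a uniformly convergent sequence, applied inductively in each coordinate direction of $\mathbb{T}^n$ — and that the limit, a priori valued in $H^{r,\rho}(X)$, is automatically an element of $\mathscr{C}^\infty(\mathbb{T}^n;(\dot{\mathscr{C}}^\infty(X))')$ via the continuous inclusion $H^{r,\rho}(X)\hookrightarrow(\dot{\mathscr{C}}^\infty(X))'$. Everything else reduces to the two trivial inequalities $\gamma!\ge 1$ and $C\le\tilde C\Rightarrow \tilde C^{-|\gamma|}\le C^{-|\gamma|}$, together with the embedding properties of the weighted Sobolev spaces on $X$ recalled in the Appendix.
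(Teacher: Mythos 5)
Your proof is correct and is exactly the argument the paper has in mind: the paper states this proposition without proof (deferring to the analogous results for $\T^n\times\R^d$ in \cite{ABC25}), and your route --- completeness via Cauchy sequences in $\mathscr{C}(\T^n;H^{r,\rho}(X))$ for each fixed $\gamma$, plus the monotonicity inequalities $(\gamma!)^{-\mu}\le(\gamma!)^{-\sigma}$, $\tilde C^{-|\gamma|}\le C^{-|\gamma|}$, and the embedding $H^{r,\rho}(X)\hookrightarrow H^{t,\lambda}(X)$ for $t\le r$, $\lambda\le\rho$ --- is the standard argument that proves it.
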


\begin{defn}
With $X$ a $d$-dimensional asymptotically Euclidean manifold, We define the spaces
$$
\mathcal{G}^{\sigma}\mathcal{H}_{r,\rho} (\T^n\times X) = 
\bigcup_{C > 0} \mathcal{G}^{\sigma} \mathcal{H}^C_{r,\rho}(\mathbb{T}^n \times X),
\;
\mathcal{G} \mathcal{H}_{r,\rho} (\T^n\times X) = \bigcup_{\sigma > 1} \mathcal{G}^{\sigma} \mathcal{H}_{r,\rho} (\T^n\times X),
$$ 
endowed with the induced inductive limit topologies.
\end{defn}

\begin{defn}
We define
$$
(\mathcal{G} \mathcal{H}_{r,\rho}')(\T^n\times X)=
\mathcal{G} \mathcal{H}_{r,\rho}' := \left(\mathcal{G} \mathcal{H}_{r,\rho}(\mathbb{T}^n \times X)\right)'
$$
as the space of all linear continuous maps $\theta: \mathcal{G} \mathcal{H}_{r,\rho} \to \mathbb{C}$. Then, a linear functional $\theta: \mathcal{G} \mathcal{H}_{r,\rho}(\mathbb{T}^n \times X) \to \mathbb{C}$ belongs to $\mathcal{G} \mathcal{H}_{r,\rho}'(\mathbb{T}^n \times X)$ if and only if, for every $\sigma > 1$, $C > 0$, there exists $B = B_{\sigma,C} > 0$ such that for every $u \in \mathcal{G}^{\sigma} \mathcal{H}^C_{r,\rho}(\mathbb{T}^n \times X)$,
$$
|\langle \theta, u \rangle| \leq B \sup_{\gamma \in \mathbb{Z}_+^n} \left\{ C^{-|\gamma|} (\gamma!)^{-\sigma} \sup_{t \in \mathbb{T}^n} \| \partial_t^{\gamma} u(t) \|_{H^{r,\rho}(X)} \right\}.
$$

\end{defn}

\begin{prop}
The spaces $\mathcal{G} \mathcal{H}_{r,\rho}$ and $\mathcal{G} \mathcal{H}_{r,\rho}'$ are inductive and projective limit of Banach spaces,
respectively, since
$$
\mathcal{G} \mathcal{H}_{r,\rho}= \bigcup_{\sigma > 1} \mathcal{G}^{\sigma} \mathcal{H}_{r,\rho} = \varinjlim_\sigma \mathcal{G}^{\sigma} \mathcal{H}_{r,\rho}^{\sigma-1}
$$    
and
$$
\mathcal{G} \mathcal{H}_{r,\rho}'= \bigcap_{\sigma > 1} (\mathcal{G}^{\sigma} \mathcal{H}_{r,\rho})' = \varprojlim_\sigma (\mathcal{G}^{\sigma} \mathcal{H}_{r,\rho}^{\sigma-1})'.
$$ 

\end{prop}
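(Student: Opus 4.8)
The plan is to deduce everything from the monotonicity inclusions \eqref{sig<tho} and \eqref{CleqD} of the previous proposition together with standard facts on locally convex inductive and projective limits; no new estimate is needed, since each $\gh^C$ is already known to be a Banach space.

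First I would exhibit a countable cofinal chain inside the directed system $\{\gh^C(\T^n\times X)\}_{\sigma>1,\,C>0}$. Given $\sigma>1$ and $C>0$, choose an integer $\mu\ge\max\{\sigma,\,C+1\}$; applying \eqref{sig<tho} with $\sigma\le\mu$ and then \eqref{CleqD} with $C\le\mu-1$ produces a continuous inclusion
$$
\gh^C(\T^n\times X)\hookrightarrow \mathcal{G}^{\mu}\mathcal{H}^{\mu-1}_{r,\rho}(\T^n\times X),
$$
so the subfamily $\{\mathcal{G}^{\mu}\mathcal{H}^{\mu-1}_{r,\rho}\}_{\mu\in\N,\,\mu\ge2}$ is cofinal; moreover it is linearly ordered, since for integers $2\le\mu\le\mu'$ the same two inclusions give $\mathcal{G}^{\mu}\mathcal{H}^{\mu-1}_{r,\rho}\hookrightarrow\mathcal{G}^{\mu'}\mathcal{H}^{\mu-1}_{r,\rho}\hookrightarrow\mathcal{G}^{\mu'}\mathcal{H}^{\mu'-1}_{r,\rho}$. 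Hence, as sets,
$$
\mathcal{G}\mathcal{H}_{r,\rho}(\T^n\times X)=\bigcup_{\sigma>1}\bigcup_{C>0}\gh^C=\bigcup_{\mu\ge2}\mathcal{G}^{\mu}\mathcal{H}^{\mu-1}_{r,\rho},
$$
an increasing countable union of Banach spaces.

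Next I would upgrade this to a topological statement. Because all linking maps of both directed systems are continuous and the chain $\{\mathcal{G}^{\mu}\mathcal{H}^{\mu-1}_{r,\rho}\}_\mu$ is cofinal, the two inductive limit topologies agree, so $\mathcal{G}\mathcal{H}_{r,\rho}=\varinjlim_{\mu}\mathcal{G}^{\mu}\mathcal{H}^{\mu-1}_{r,\rho}$ is an (LB)-space; the same remark applied to $\{\gh^C\}_{C>0}$ (with $C=k\in\N$) shows that each $\mathcal{G}^\sigma\mathcal{H}_{r,\rho}$ is an (LB)-space. For the dual I would use the universal property of the inductive limit topology: a linear functional $\theta$ on $\mathcal{G}\mathcal{H}_{r,\rho}$ is continuous iff its restriction to every step $\gh^C$ is continuous, hence, by the cofinality just established, iff its restriction to every $\mathcal{G}^{\mu}\mathcal{H}^{\mu-1}_{r,\rho}$ is continuous, equivalently iff its restriction to every $\mathcal{G}^\sigma\mathcal{H}_{r,\rho}$ is continuous. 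Identifying $\theta$ with the coherent family of its restrictions (transition maps being the adjoints of the inclusions) and invoking the standard fact that the strong dual of a locally convex inductive limit is the projective limit of the strong duals of the steps, I obtain
$$
\mathcal{G}\mathcal{H}'_{r,\rho}=\bigcap_{\sigma>1}(\mathcal{G}^\sigma\mathcal{H}_{r,\rho})'=\varprojlim_{\mu}(\mathcal{G}^{\mu}\mathcal{H}^{\mu-1}_{r,\rho})',
$$
which is a projective limit of Banach spaces, each step being the dual of a Banach space, and the strong dual topology on the left matches the projective limit topology on the right.

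I expect the only delicate point to be purely topological: verifying that replacing the uncountable directed system $\{\gh^C\}_{\sigma,C}$ by the countable chain $\{\mathcal{G}^{\mu}\mathcal{H}^{\mu-1}_{r,\rho}\}_\mu$ does not alter the inductive-limit topology (and hence its dual). This is precisely the assertion that a cofinal subsystem of a directed inductive system of locally convex spaces computes the same colimit, which applies here because all steps are Banach and all linking maps are continuous inclusions; everything else is the bookkeeping with \eqref{sig<tho} and \eqref{CleqD} carried out above.
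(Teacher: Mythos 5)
Your proposal is correct and follows the argument the paper intends (which it leaves implicit, referring to \cite{ABC25}): the monotonicity inclusions \eqref{sig<tho} and \eqref{CleqD} are norm-decreasing, so the diagonal family $\{\mathcal{G}^{\mu}\mathcal{H}^{\mu-1}_{r,\rho}\}_{\mu\ge 2}$ is a cofinal countable chain in the directed system $\{\mathcal{G}^{\sigma}\mathcal{H}^{C}_{r,\rho}\}_{\sigma>1,C>0}$, which yields both the inductive-limit description and, via the universal property, the dual as the projective limit of the dual Banach spaces. The only remark is that your final appeal to the strong dual of an inductive limit being topologically the projective limit of strong duals is more than what the statement requires (and in general needs regularity of the (LB)-space); the paper's definition of $\mathcal{G}\mathcal{H}'_{r,\rho}$ only involves continuity on each Banach step, for which your universal-property argument already suffices.
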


\subsection{The spaces $\Fc = \Fc(\mathbb{T}^n \times X)$ and $\Fc' = \left(\Fc(\mathbb{T}^n \times X)\right)^\prime$} 

\begin{defn}
We define the space
\begin{equation}\label{eq:defF}
    \Fc = \Fc(\mathbb{T}^n \times X) \doteq \bigcap_{r, \rho \in \mathbb{R}} \mathcal{G}\mathcal{H}_{r,\rho}(\mathbb{T}^n \times X),
\end{equation}
equipped with the projective limit topology. 
\end{defn}

Specifically, for a sequence $(f_j)_{j\in\N}\subset\Fc$, $f_j \to 0$ as $j \to \infty$ in $\Fc$ if and only if for every pair $(r, \rho) \in \mathbb{R}^2$ there exists $\sigma = \sigma_{r\rho} > 1$ such that
$$
\lim_{j \to \infty} f_j = 0 \quad \text{in} \quad \mathcal{G}^\sigma\mathcal{H}_{r,\rho}.
$$
\begin{defn}
We denote by $\Fc'$ the space of all continuous linear functionals $\theta\colon \Fc \to \mathbb{C}$. Then,
\begin{equation}\label{F'=UH'}
    \Fc' =\Fc'(\mathbb{T}^n \times X)= (\Fc(\mathbb{T}^n \times X))' = \bigcup_{r, \rho \in \mathbb{R}} \mathcal{G}\mathcal{H}_{r,\rho}'(\mathbb{T}^n \times X),
\end{equation}	
equipped with the inductive limit topology.    
\end{defn}

\subsection{Eigenfunction expansions in $\Fc(\T^n \times X)$ and $\Fc'(\T^n \times X)$}

Let $P \in \Psi_{\scc}^{m,\mu}(X)$ be a self-adjoint, elliptic scattering operator of orders $m, \mu > 0$, and let $\{ \phi_j \}_{j \in \mathbb{N}^*} \subset \dot{\mathscr{C}}^\infty(X)$ denote the associated orthonormal basis of eigenfunctions, corresponding to the eigenvalues $\{ \lambda_j \}_{j \in \mathbb{N}^*}$. The sequence $\{\widetilde{\lambda}_j\}_{j\in\N^*}$ denotes the sequence of eigenvalues of $\widetilde{P}=P+P_0$, $P_0$ 
the orthogonal projection on $\ker P$ (that is, it coincides with the
sequence $\{ \lambda_j \}_{j \in \mathbb{N}^*}$, except for the first $N$ vanishing element, 
$N=\dim\ker P<\infty$, which are substituted by $\widetilde{\lambda}_j=1$, $j=1,\dots,N$).

Recall that, if $P$ is, additionally, classical and positive, the asymptotic Weyl law, as studied in \cite{BC11, CorMan13, CD21}, describes the behavior of the spectral counting function as $\lambda \to +\infty$. In this case, we have
$$
N(\lambda) \sim 
\begin{cases}
C_1\, \lambda^{d/\min\{m,\mu\}}, & \quad m \ne \mu, \\
C_2\, \lambda^{d/m} \log \lambda, & \quad m = \mu,
\end{cases}
$$
where $N(\lambda) = N_P(\lambda) = \left| \{ \lambda_j \leq \lambda : \lambda_j \text{ is an eigenvalue of } P \} \right|$ denotes the spectral counting function of $P$, and the constants $C_1, C_2$ depend on the principal symbol of the operator.

The asymptotics of $N(\lambda)$ above also determines the asymptotic behaviour of the eigenvalues $\lambda_j$ (see, e.g., ~\cite{ABC25} for details). Specifically, as $j \to \infty$, one obtains
$$
\lambda_j \sim
\begin{cases}
\widetilde{C}_1\, j^{\min\{m,\mu\}/d}, & \quad m \ne \mu, \\
\widetilde{C}_2 \left( \dfrac{j}{\log j} \right)^{m/d}, & \quad m = \mu,
\end{cases}
$$
for suitable constants $\widetilde{C}_1, \widetilde{C}_2 > 0$.

\begin{rem}\label{rem:polyboundlambda}
Notice that, in such case, there are constants $K,K'>0$ and exponents $\varrho, \varrho' > 0$ such that
$$
K' j^{\varrho'} \leq \lambda_j \leq K j^{\varrho}, \quad j \to \infty.
$$
Moreover, the exponents can be chosen such that $\varrho - \varrho' \leq \epsilon$ for any $\epsilon > 0$.
\end{rem}
\noindent
We first state results concerning eigenfunctions expansions for the weighted Sobolev spaces on $X$. We omit the proofs,
which follow by the same arguments employed to prove the analogous results in \cite{ABC25}, by reduction to the case $X=\R^d$
via admissible local charts and partitions of unity.

\begin{prop}\label{prop:eqnorws}
		Let $P \in \Psi_{\scc}^{m,\mu}(X)$ be an elliptic, normal scattering operator on $X$ with order components $m,\mu>0$. 
		Denote by $P_0$ the orthogonal projection on $\ker P$ and let $\widetilde P=P+P_0$. Then,
		$$
			u \in H^{m,\mu}(X) \Longleftrightarrow P u \in H^{0,0}(X)\equiv L^2(X)
		$$
		and
		$$
			\|u\|_{H^{m,\mu}(X)} \asymp \|\widetilde Pu\|_{L^2(X)}.
		$$
	\end{prop}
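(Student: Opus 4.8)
The plan is to reduce the statement to the corresponding result on $\R^d$, which is already established in \cite{ABC25}, using the standard machinery of admissible local charts and partitions of unity for scattering (SG) calculus on asymptotically Euclidean manifolds, as recalled in Appendix \ref{app:scattering}. Since the claimed equivalence is \emph{local-to-global} in nature once a suitable covering is fixed, the argument splits into three parts: the easy implication, the converse implication via parametrix construction, and the norm equivalence, which in fact follows at once from the two implications together with the closed graph theorem (or, more concretely, from explicit estimates obtained along the way).

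First I would observe that $\widetilde P = P + P_0$ is still an elliptic scattering operator of orders $(m,\mu)$, since $P_0$ is a finite-rank smoothing operator (its range lies in $\dot{\mathscr{C}}^\infty(X)$, being spanned by eigenfunctions $\phi_1,\dots,\phi_N$), hence $P_0 \in \Psi_{\scc}^{-\infty,-\infty}(X)$ and does not affect the principal symbol. Moreover $\widetilde P$ is \emph{invertible} on $L^2(X)$: it is self-adjoint (or normal) with trivial kernel, because on $\ker P$ it acts as the identity and on $(\ker P)^\perp$ it agrees with $P$, which is injective there. By elliptic regularity for scattering operators, $\widetilde P \colon H^{m,\mu}(X)\to H^{0,0}(X)=L^2(X)$ is a topological isomorphism, with inverse $\widetilde P^{-1}\in\Psi_{\scc}^{-m,-\mu}(X)$ constructed as a parametrix modulo $\Psi_{\scc}^{-\infty,-\infty}(X)$, the smoothing remainder being absorbed using invertibility on $L^2$. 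The forward implication $u\in H^{m,\mu}(X)\Rightarrow \widetilde P u\in L^2(X)$ together with $\|\widetilde P u\|_{L^2}\lesssim\|u\|_{H^{m,\mu}}$ is just continuity of $\widetilde P$ on the scales; the reverse implication $\widetilde P u\in L^2(X)\Rightarrow u=\widetilde P^{-1}(\widetilde P u)\in H^{m,\mu}(X)$ with $\|u\|_{H^{m,\mu}}\lesssim\|\widetilde P u\|_{L^2}$ is continuity of $\widetilde P^{-1}$. Combining, $\|u\|_{H^{m,\mu}(X)}\asymp\|\widetilde P u\|_{L^2(X)}$.

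The main obstacle, such as it is, lies in setting up the reduction to $\R^d$ cleanly: one must choose a finite atlas of admissible charts $\{(U_i,\kappa_i)\}$ near $\partial X$ and one interior chart, a subordinate partition of unity $\{\chi_i\}\subset\dot{\mathscr{C}}^\infty(X)$ (together with cutoffs $\widetilde\chi_i$ equal to $1$ on $\supp\chi_i$), and then check that the pushed-forward operators $(\kappa_i)_*(\widetilde\chi_i P\chi_i)$ are elliptic SG-operators on $\R^d$ in the sense of \cite{ABC25}, so that the Sobolev-Kato norm $\|u\|_{H^{r,\rho}(X)}$ is equivalent to $\sum_i\|(\kappa_i)_*(\chi_i u)\|_{H^{r,\rho}(\R^d)}$. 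This is standard but requires care with the weight at the boundary, i.e.\ with the relation between $\rho_X$ and the variable $\langle x\rangle^{-1}$ in the Euclidean model; all of this is exactly the content of the passage ``by reduction to the case $X=\R^d$ via admissible local charts and partitions of unity'' already invoked before the proposition. Once that dictionary is in place, the Euclidean statement from \cite{ABC25} applies chart-by-chart, and patching the local equivalences via the finite partition of unity yields the global norm equivalence $\|u\|_{H^{m,\mu}(X)}\asymp\|\widetilde P u\|_{L^2(X)}$, completing the proof. I would therefore present the argument briefly, pointing to Appendix \ref{app:scattering} for the SG-calculus facts and to \cite{ABC25} for the Euclidean prototype, and spend most of the written proof on the invertibility of $\widetilde P$ on $L^2$ and the parametrix step, which are the only genuinely new small points.
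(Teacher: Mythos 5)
Your core argument (the first two paragraphs) is correct and is essentially the intended one: the paper omits the proof and defers to the Euclidean case of \cite{ABC25}, whose argument is exactly the combination you give — continuity of $\widetilde P\colon H^{m,\mu}(X)\to L^2(X)$, a parametrix $Q\in\Psi_{\scc}^{-m,-\mu}(X)$ whose smoothing remainder maps $(\dot{\mathscr{C}}^\infty(X))'$ into $\dot{\mathscr{C}}^\infty(X)$ (so $Pu\in L^2$ forces $u\in H^{m,\mu}$), and injectivity of $\widetilde P$ together with Fredholmness of index zero (normality gives $\ker P=\ker P^*$, ellipticity with $m,\mu>0$ gives the Fredholm property), whence $\widetilde P$ is a topological isomorphism and the norm equivalence follows. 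Two small points worth writing out rather than asserting: surjectivity should be justified via closed range plus $\ker\widetilde P^*=\{0\}$ (or index zero), not just the phrase ``by elliptic regularity''; and the equivalence stated with $P$ rather than $\widetilde P$ follows because $P_0u$ is a finite linear combination of the eigenfunctions $\phi_j\in\dot{\mathscr{C}}^\infty(X)$.

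The step you should not present as written is the patching claim in your last paragraph: one cannot ``apply the Euclidean statement from \cite{ABC25} chart-by-chart'' to the pushed-forward operators $(\kappa_i)_*(\widetilde\chi_i P\chi_i)$ and then sum over the partition of unity. Ellipticity in the SG/scattering sense is a global condition on the symbol, and these localized operators have symbols vanishing off the cutoff supports, are not normal, and their kernels bear no relation to $P_0$, so the hypotheses of the Euclidean proposition fail for them. The chart reduction invoked by the paper operates at a different level: it transfers the definition of $H^{r,\rho}(X)$, the mapping properties of $\Psi_{\scc}^{m,\mu}(X)$ on the Sobolev--Kato scale, and the construction of a parametrix (glued from local parametrices plus an interior piece) from the $\R^d$ calculus to $X$. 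Once those calculus facts are available — and they are precisely what your first two paragraphs use — the direct argument you gave is complete, and the third paragraph should be reduced to a reference to those facts rather than a chart-by-chart application of the norm equivalence itself.
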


 \begin{defn}
    	Let $P \in \Psi_{\scc}^{m,\mu}(X)$ be an elliptic, normal scattering operator on $X$ with order components $m,\mu>0$, 
	and denote by $\{\phi_j\}_{j \in \N^\ast}$ a basis of orthonormal eigenfunctions of $P$. 
	Given $f \in \dot{\mathscr{C}}^\infty(X)$ we set
	\[
    		f_j = (f,\phi_j)_{L^2(X)},\quad j \in \N^\ast,
    	\]
    	which implies $f = \sum_{j \in \N^\ast} f_j \phi_j$. By duality, for $u \in (\dot{\mathscr{C}}^\infty(X))'$ we set 
    	\begin{equation}\label{eq:uj}
    		u_j = u\!\left(\overline{\phi_j}\right)=\langle u , \overline{\phi_j}\rangle,
    	\end{equation}
    	which implies $u = \sum_{j \in \N^\ast} u_j \phi_j$.
	\end{defn}
\begin{thm}\label{thm:exp_Hmmu}
		Let $P \in  \Psi_{\scc}^{m,\mu}(X)$ be an elliptic, normal scattering operator 
		with order components $m,\mu>0$, and denote by $\{\phi_j\}_{j \in \N^\ast}$ a basis of orthonormal 
		eigenfunctions of $P$ with corresponding eigenvalues $\{\lambda_j\}_{j \in \N^\ast}$. Let $r \in \N$. Then, $u \in (\dot{\mathscr{C}}^\infty(X))'$
		belongs to $H^{rm,r\mu}(X)$ if and only if 
		\[
			\sum_{j \in \mathbb N} |u_j|^2 |\lambda_j|^{2r} <\infty,
		\]
		with $u_j$ defined in \eqref{eq:uj}. Moreover, 
		\begin{equation}\label{eq:Hmmuseries}
			\|u\|_{H^{rm,r\mu}(X)}^2 \asymp \sum_{j \in \N^\ast} |u_j|^2 | \widetilde{\lambda}_j|^{2r},
		\end{equation}
		with the eigenvalues $\{\widetilde{\lambda}_j\}_{j \in \N^\ast}$ of $\widetilde{P} = P + P_0$, $P_0$ the projection on $\ker P$.
	\end{thm}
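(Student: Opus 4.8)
The plan is to deduce Theorem~\ref{thm:exp_Hmmu} from Proposition~\ref{prop:eqnorws} by an iteration/bootstrap argument combined with the spectral decomposition of $\widetilde P$. The starting point is the case $r=1$: by Proposition~\ref{prop:eqnorws}, $u\in H^{m,\mu}(X)$ if and only if $\widetilde Pu\in L^2(X)$, and $\|u\|_{H^{m,\mu}(X)}\asymp\|\widetilde Pu\|_{L^2(X)}$. Since $\{\phi_j\}$ is an orthonormal basis of $L^2(X)$ consisting of eigenfunctions, with $\widetilde P\phi_j=\widetilde\lambda_j\phi_j$, Parseval gives $\|\widetilde Pu\|_{L^2}^2=\sum_j|\widetilde\lambda_j|^2|u_j|^2$, while $\|u\|_{L^2}^2=\sum_j|u_j|^2$; hence $u\in H^{m,\mu}(X)\iff\sum_j|u_j|^2|\widetilde\lambda_j|^2<\infty$, which is \eqref{eq:Hmmuseries} for $r=1$ (using that $\widetilde\lambda_j=\lambda_j$ for all but finitely many $j$, so $\sum_j|u_j|^2|\widetilde\lambda_j|^{2}<\infty\iff\sum_j|u_j|^2|\lambda_j|^{2}<\infty$).

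\medskip
\noindent
For the inductive step, suppose the statement holds for some $r\in\N$, $r\ge1$, and let $u\in(\dot{\mathscr C}^\infty(X))'$. First observe that $H^{(r+1)m,(r+1)\mu}(X)=\{v\in H^{m,\mu}(X):\widetilde Pv\in H^{rm,r\mu}(X)\}$ with equivalent norms: this follows by applying Proposition~\ref{prop:eqnorws} to the operator $\widetilde P$ on the scale of spaces, i.e.\ $v\in H^{(k+1)m,(k+1)\mu}\iff\widetilde Pv\in H^{km,k\mu}$, which is the natural mapping property of the elliptic operator $\widetilde P\in\Psi_{\scc}^{m,\mu}(X)$ (invertible, since $\widetilde P$ has trivial kernel) acting as an isomorphism $H^{s+m,t+\mu}(X)\to H^{s,t}(X)$ for all $s,t$. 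Now if $u\in H^{(r+1)m,(r+1)\mu}(X)$ then $\widetilde Pu\in H^{rm,r\mu}(X)$, so by the inductive hypothesis applied to $\widetilde Pu$ — whose $j$-th coefficient is $(\widetilde Pu)_j=\langle\widetilde Pu,\overline{\phi_j}\rangle=\langle u,\overline{\widetilde P\phi_j}\rangle=\widetilde\lambda_j u_j$ (using self-adjointness/normality of $\widetilde P$ and that $\widetilde\lambda_j$ is real after the modification, or handling the complex case via $\widetilde P^*$) — we get $\sum_j|\widetilde\lambda_j u_j|^2|\widetilde\lambda_j|^{2r}=\sum_j|u_j|^2|\widetilde\lambda_j|^{2(r+1)}<\infty$, together with $\|\widetilde Pu\|_{H^{rm,r\mu}}^2\asymp\sum_j|u_j|^2|\widetilde\lambda_j|^{2(r+1)}$. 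Combining with $\|u\|_{H^{(r+1)m,(r+1)\mu}}\asymp\|\widetilde Pu\|_{H^{rm,r\mu}}$ yields \eqref{eq:Hmmuseries} at level $r+1$. Conversely, if $\sum_j|u_j|^2|\lambda_j|^{2(r+1)}<\infty$, then a fortiori $\sum_j|u_j|^2|\widetilde\lambda_j|^{2r}<\infty$, so the inductive hypothesis gives $u\in H^{rm,r\mu}(X)\subset H^{m,\mu}(X)$; moreover the sequence $(\widetilde\lambda_j u_j)_j$ is $\ell^2$-summable against $|\widetilde\lambda_j|^{2r}$, so $\widetilde Pu\in H^{rm,r\mu}(X)$ and hence $u\in H^{(r+1)m,(r+1)\mu}(X)$.

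\medskip
\noindent
\textbf{The main obstacle} I anticipate is making rigorous the manipulation $(\widetilde Pu)_j=\widetilde\lambda_j u_j$ and the mapping property of $\widetilde P$ at the level of the distribution space $(\dot{\mathscr C}^\infty(X))'$, rather than just on $L^2$. One must check that $\widetilde P$ maps $\dot{\mathscr C}^\infty(X)$ into itself (so that the pairing $\langle u,\overline{\widetilde P\phi_j}\rangle$ makes sense for $u\in(\dot{\mathscr C}^\infty(X))'$), that $\widetilde P\phi_j=\widetilde\lambda_j\phi_j$ still holds (immediate, since $\phi_j$ is an eigenfunction of $P$ and $P_0\phi_j=0$ or $\phi_j$ according to whether $\phi_j\perp\ker P$), and that the formal eigenfunction expansion $u=\sum_j u_j\phi_j$ converges in $(\dot{\mathscr C}^\infty(X))'$ so that termwise application of $\widetilde P$ is legitimate. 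All of this is part of the standard scattering-calculus functional framework recalled in the Appendix and already used in \cite{ABC25}; once these facts are in place, the argument above is purely formal. A secondary point, worth a remark, is the passage between $\widetilde\lambda_j$ and $\lambda_j$ in the convergence statement: since they differ only in the finitely many indices $j=1,\dots,N$ where $\lambda_j=0$ and $\widetilde\lambda_j=1$, a finite sum is always harmless, which is exactly why the convergence criterion can be stated with $\lambda_j$ while the norm equivalence must use $\widetilde\lambda_j$.
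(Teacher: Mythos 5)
Your argument is correct in substance, but it takes a genuinely different route from the one the paper (implicitly) follows: the paper omits the proof altogether, stating that it is obtained by reducing to the Euclidean case $X=\R^d$ via admissible local charts and partitions of unity and repeating the arguments of \cite{ABC25}, whereas you give a direct, intrinsic induction on $r$ that never localizes. Your base case is exactly Proposition \ref{prop:eqnorws} plus Parseval (note the proposition is stated with $Pu\in L^2(X)$ rather than $\widetilde Pu$, but the discrepancy is the finite-rank smoothing term $P_0u$, so this is harmless), and the inductive step rests on two facts you correctly flag as the real content: (i) the identity $(\widetilde Pu)_j=\widetilde\lambda_j u_j$ for $u\in(\dot{\mathscr{C}}^\infty(X))'$, which for a merely normal $P$ must be run through the adjoint, $\langle \widetilde Pu,\overline{\phi_j}\rangle=\langle u,\overline{\widetilde P^*\phi_j}\rangle=\widetilde\lambda_j u_j$ since $\widetilde P^*\phi_j=\overline{\widetilde\lambda_j}\,\phi_j$; and (ii) the fact that $\widetilde P$, being elliptic with spectrum bounded away from $0$, is an isomorphism $H^{s+m,t+\mu}(X)\to H^{s,t}(X)$ for all $s,t$, so that $v\in H^{(k+1)m,(k+1)\mu}(X)\iff \widetilde Pv\in H^{km,k\mu}(X)$ with equivalent norms. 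Point (ii) goes beyond the literal statement of Proposition \ref{prop:eqnorws} (which is only the case $k=0$) and should be justified by a parametrix/elliptic-regularity argument in the scattering calculus, but it is standard; the passage between $\lambda_j$ and $\widetilde\lambda_j$ you handle correctly, since they differ only on the finitely many indices of $\ker P$ and $|\widetilde\lambda_j|$ is bounded below. What your approach buys is independence from the chart-and-partition-of-unity machinery and from the Euclidean results of \cite{ABC25}, at the price of invoking the isomorphism property of $\widetilde P$ on the whole Sobolev--Kato scale; the paper's route instead recycles the $\R^d$ proof wholesale and only needs the compatibility of the spaces and the calculus under the admissible charts.
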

\begin{cor}\label{cor:schw}
	Under the same hypotheses of Theorem \ref{thm:exp_Hmmu}, we have, for $u \in (\dot{\mathscr{C}}^\infty(X))'$,
	\[
		u \in \dot{\mathscr{C}}^\infty(X)\iff \sum_{j \in \N^\ast} |u_j|^2 |\lambda_j|^{2M} <\infty \text{ for any $M\in\N$}.
	\]
\end{cor}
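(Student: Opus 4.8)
\textbf{Proof plan for Corollary \ref{cor:schw}.}
The plan is to deduce the characterisation of $\dot{\mathscr{C}}^\infty(X)$ directly from Theorem \ref{thm:exp_Hmmu}, using the standard fact that $\dot{\mathscr{C}}^\infty(X) = \bigcap_{k \in \Z_+} H^{k,k}(X)$ (equivalently, $\dot{\mathscr{C}}^\infty(X) = \bigcap_{r,\rho \in \R} H^{r,\rho}(X)$), which is part of the basic theory of weighted Sobolev spaces on asymptotically Euclidean manifolds recalled in the Appendix. Since $m, \mu > 0$, the pairs $(rm, r\mu)$ with $r \in \N$ are cofinal among the indices $(k,k)$, $k \in \Z_+$, so $\dot{\mathscr{C}}^\infty(X) = \bigcap_{r \in \N} H^{rm,r\mu}(X)$ as well.

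First I would establish the forward implication. If $u \in \dot{\mathscr{C}}^\infty(X)$, then $u \in H^{rm,r\mu}(X)$ for every $r \in \N$, so Theorem \ref{thm:exp_Hmmu} gives $\sum_j |u_j|^2 |\widetilde\lambda_j|^{2r} < \infty$ for every $r$. Since $\widetilde\lambda_j = \lambda_j$ for all but finitely many $j$, and the finitely many exceptional terms contribute a finite amount, this is equivalent to $\sum_j |u_j|^2 |\lambda_j|^{2r} < \infty$ for every $r \in \N$. To cover an arbitrary $M \in \N$, note that $M \le rm$ and $M \le r\mu$ once $r$ is large enough (here one uses again $m,\mu > 0$, or rather that we only need the statement for a cofinal family of exponents — in fact Remark \ref{rem:polyboundlambda} shows $|\lambda_j| \to \infty$, so $|\lambda_j|^{2M} \le |\lambda_j|^{2r}$ for $j$ large whenever $M \le r$, and the finitely many small-$j$ terms are harmless).

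For the reverse implication, suppose $\sum_j |u_j|^2 |\lambda_j|^{2M} < \infty$ for every $M \in \N$. Given any $r \in \N$, pick $M \ge r$; then, using $|\widetilde\lambda_j| \le C|\lambda_j|$ for large $j$ together with $|\lambda_j|^{2r} \lesssim |\lambda_j|^{2M}$ for large $j$ (again by $|\lambda_j| \to \infty$), and absorbing the finitely many remaining terms, we get $\sum_j |u_j|^2 |\widetilde\lambda_j|^{2r} < \infty$, hence $u \in H^{rm,r\mu}(X)$ by Theorem \ref{thm:exp_Hmmu}. Since this holds for all $r \in \N$, we conclude $u \in \bigcap_{r \in \N} H^{rm,r\mu}(X) = \dot{\mathscr{C}}^\infty(X)$.

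The only genuinely delicate point is the identification $\dot{\mathscr{C}}^\infty(X) = \bigcap_{r\in\N} H^{rm,r\mu}(X)$; everything else is bookkeeping with the two polynomial weights and the finite rank correction $P \rightsquigarrow \widetilde P$. This identification is the scattering-manifold analogue of the well-known equality $\mathscr{S}(\R^d) = \bigcap_{s} H^{s,s}(\R^d)$ between Schwartz functions and the intersection of Sobolev–Kato spaces, and transfers to $X$ by admissible local charts and partitions of unity exactly as in \cite{ABC25} and \cite{ME}; I would simply cite it. If one prefers to avoid invoking it, an alternative is to run the argument through Corollary/Proposition-level consequences of Theorem \ref{thm:exp_Hmmu} already available for $H^{r,\rho}(X)$ with real indices, but the cofinality remark makes the integer-multiples formulation sufficient.
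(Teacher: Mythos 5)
Your argument is correct and is essentially the paper's intended route: the authors omit the proof, but the corollary is precisely Theorem \ref{thm:exp_Hmmu} combined with the identification $\dot{\mathscr{C}}^\infty(X)=\bigcap_{r,\rho\in\R}H^{r,\rho}(X)$ (which the paper itself invokes) and the cofinality of the pairs $(rm,r\mu)$, $r\in\N$, exactly as you do. Note only that the bookkeeping simplifies further, since the exponent $M$ in the series and the index $r$ in the theorem play the same role, so one may simply take $r=M$ and handle the finitely many $\widetilde{\lambda}_j\neq\lambda_j$ terms directly.
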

\begin{cor}\label{cor:exp_Hmmu}
	Let $P \in \Psi_{\sccl}^{m,\mu}(X)$ be an elliptic, invertible, self-adjoint, positive, classical scattering operator 
	with order components $m,\mu>0$, and denote by $\{\phi_j\}_{j \in \N^\ast}$ a basis of orthonormal 
	eigenfunctions of $P$ with corresponding eigenvalues $\{\lambda_j\}_{j \in \N^\ast}$. Let $r \in \R$. Then, $u \in (\dot{\mathscr{C}}^\infty(X))'$
	belongs to $H^{rm,r\mu}(X)$ if and only if 
	\[
		\begin{cases}
			\displaystyle\sum_{j \in \mathbb N} |u_j|^2 \, j^\frac{2r\min\{m,\mu\}}{d} <\infty, & \text{ if $m\not=\mu$,}
			\\
			\displaystyle\sum_{j \in \mathbb N} |u_j|^2 \left(\frac{j}{\log j}\right)^\frac{2rm}{d} <\infty, & \text{ if $m=\mu$,}
		\end{cases}
	\]
	with $u_j$ defined in \eqref{eq:uj}. Moreover, 
	\begin{equation}\label{eq:Hmmuseriesbis}
		\|u\|_{H^{rm,r\mu}(X)}^2 \asymp \sum_{j \in \N^\ast} |u_j|^2 | {\lambda}_j|^{2r} \asymp
				\begin{cases}
			\displaystyle\sum_{j \in \mathbb N} |u_j|^2 \, j^\frac{2r\min\{m,\mu\}}{d} <\infty,  &\text{if $m\not=\mu$,}
			\\
			\displaystyle\sum_{j \in \mathbb N} |u_j|^2 \left(\frac{j}{\log j}\right)^\frac{2rm}{d} <\infty,  &\text{if $m=\mu$.}
		\end{cases}
	\end{equation}
\end{cor}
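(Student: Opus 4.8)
The plan is to obtain Corollary \ref{cor:exp_Hmmu} from Theorem \ref{thm:exp_Hmmu}, the eigenvalue asymptotics recalled before Remark \ref{rem:polyboundlambda}, and the complex powers of $P$; the extra hypotheses (invertible, positive, classical) are precisely what makes the last ingredient available.

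\emph{Step 1 (complex powers).} Since $P$ is classical, elliptic, self-adjoint, positive and invertible, its spectrum lies in $[\lambda_1,\infty)$ with $\lambda_1>0$. By the Seeley-type construction of complex powers within the classical scattering calculus, for each $r\in\R$ the operator $P^r\in\Psi^{rm,r\mu}_{\sccl}(X)$ is again classical, elliptic, positive and self-adjoint, with $P^rP^s=P^{r+s}$, $P^0=I$, $P^{-r}=(P^r)^{-1}$ and $P^r\phi_j=\lambda_j^r\phi_j$ for all $j\in\N^\ast$. In particular $P^r$ maps $\dot{\mathscr{C}}^\infty(X)$ into itself and extends continuously to $(\dot{\mathscr{C}}^\infty(X))'$.

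\emph{Step 2 (real-order reduction to $L^2$).} I would then upgrade Proposition \ref{prop:eqnorws} to real orders: the mapping properties of elliptic scattering operators on the scattering Sobolev scale give that $P^r\colon H^{rm,r\mu}(X)\to L^2(X)$ and $P^{-r}\colon L^2(X)\to H^{rm,r\mu}(X)$ are continuous and mutually inverse, whence
\[
u\in H^{rm,r\mu}(X)\iff P^ru\in L^2(X),\qquad \|u\|_{H^{rm,r\mu}(X)}\asymp\|P^ru\|_{L^2(X)},\qquad r\in\R.
\]
For $r\in\N$ this is Proposition \ref{prop:eqnorws} iterated (applied between consecutively shifted Sobolev spaces), noting that here $\ker P=\{0\}$, so $\widetilde P=P$ and $\widetilde\lambda_j=\lambda_j$, which also recovers the relevant case of Theorem \ref{thm:exp_Hmmu}.

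\emph{Steps 3--4 (Parseval and Weyl law).} For $u$ with $P^ru\in L^2(X)$ one has $P^ru=\sum_j\lambda_j^r u_j\phi_j$, so Parseval gives $\|P^ru\|_{L^2(X)}^2=\sum_{j\in\N^\ast}\lambda_j^{2r}|u_j|^2$; conversely, if this sum is finite, then $v=\sum_j\lambda_j^r u_j\phi_j\in L^2(X)$ and $P^{-r}v$ has the same eigen-coefficients as $u$, hence $u=P^{-r}v\in H^{rm,r\mu}(X)$. Combined with Step 2 this yields the membership criterion $u\in H^{rm,r\mu}(X)\iff\sum_j\lambda_j^{2r}|u_j|^2<\infty$ together with the first equivalence in \eqref{eq:Hmmuseriesbis}. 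Finally, inserting $\lambda_j\sim\widetilde C_1\,j^{\min\{m,\mu\}/d}$ ($m\ne\mu$), resp. $\lambda_j\sim\widetilde C_2\,(j/\log j)^{m/d}$ ($m=\mu$), and raising to the power $2r$ (legitimate for any real $r$ since $\lambda_j>0$ and $\lambda_j\to\infty$) gives $\lambda_j^{2r}\asymp j^{2r\min\{m,\mu\}/d}$, resp. $\lambda_j^{2r}\asymp(j/\log j)^{2rm/d}$, for all large $j$; the finitely many remaining indices are absorbed into the comparison constants, so the three weighted $\ell^2$ sums are simultaneously finite and mutually comparable, which completes the proof.

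\emph{Main obstacle.} The delicate point is Step 2 for non-integer $r$, i.e.\ extending Proposition \ref{prop:eqnorws} off the integer lattice of orders; this rests on the construction and symbolic calculus of complex powers $P^r$ in the classical scattering calculus (which is where "classical" and "positive" are used). A route avoiding complex powers is to complex-interpolate the integer cases already covered by Theorem \ref{thm:exp_Hmmu}, using $[H^{km,k\mu}(X),H^{(k+1)m,(k+1)\mu}(X)]_\theta=H^{rm,r\mu}(X)$ together with the corresponding interpolation of the weighted $\ell^2$ spaces for $r\ge 0$, and $L^2$-duality for $r<0$; this merely shifts the technical burden onto the (standard) interpolation identities for the scattering Sobolev scale.
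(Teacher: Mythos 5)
Your proposal is correct and is essentially the argument the paper intends: the proof is omitted in the text with a pointer to the analogous result in \cite{ABC25}, whose route is exactly yours --- reduce to $L^2$ via the (Seeley/MSS-type) complex powers $P^r$ of the positive, invertible, classical operator (giving $\|u\|_{H^{rm,r\mu}}\asymp\|P^ru\|_{L^2}$), apply Parseval to get $\sum_j\lambda_j^{2r}|u_j|^2$, and insert the Weyl-law asymptotics for $\lambda_j$. The only cosmetic difference is that you work invariantly on $X$ in the scattering calculus, whereas the paper phrases the reduction through admissible charts to the $\R^d$ (SG) setting; the mathematical content is the same.
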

\begin{thm}\label{thm:unifconver}
	Let $P \in \Psi_{\sccl}^{m,\mu}(X)$ be an elliptic, normal, classical scattering operator 
	with order components $m,\mu>0$ or $m,\mu<0$, and denote by $\{\phi_j\}_{j \in \N^\ast}$ a basis of orthonormal 
	eigenfunctions of $P$. Then, $f\in\dot{\mathscr{C}}^\infty(X)$ if and only if 
	\begin{equation}\label{eq:convunsch}
		\sum_{j \in \N^\ast} \left|(f,\phi_j)_{L^2(X)} \right| \, \left|(A\phi_j)(x) \right|, \quad x\in X.
	\end{equation}
	converges uniformly on $X$ for every scattering operator $A$.
\end{thm}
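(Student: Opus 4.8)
The plan is to reduce the statement, via Corollary~\ref{cor:schw}, to the assertion that $f\in(\dot{\mathscr{C}}^\infty(X))'$ lies in $\dot{\mathscr{C}}^\infty(X)$ exactly when its Fourier coefficients $f_j=(f,\phi_j)_{L^2(X)}$ are rapidly decreasing, i.e.\ for every $N$ there is $C_N>0$ with $|f_j|\le C_N\langle j\rangle^{-N}$, and then to play this decay off against the polynomial growth of the $\phi_j$ under scattering operators. First I would dispose of the sign of the orders: if $m,\mu<0$ one runs the whole argument with $P$ replaced by $\widetilde P^{-1}\in\Psi_{\scc}^{-m,-\mu}(X)$, which by the scattering calculus and the invertibility of $\widetilde P=P+P_0$ is again an elliptic, normal, classical scattering operator, now of positive orders, with the same eigenfunctions $\phi_j$, the same coefficients $f_j$, and the same class of operators $A$; so I may assume $m,\mu>0$. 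Set $\kappa_j:=|\widetilde\lambda_j|$. Two preliminary facts will be used throughout: applying Remark~\ref{rem:polyboundlambda} to the positive, self-adjoint, elliptic, classical operator $\widetilde P^{\,*}\widetilde P$, whose eigenvalues are the $\kappa_j^{\,2}$, gives constants $K,K',\varrho,\varrho'>0$ with $K'\langle j\rangle^{\varrho'}\le\kappa_j\le K\langle j\rangle^{\varrho}$ for all $j$; and applying Theorem~\ref{thm:exp_Hmmu} to $u=\phi_j$ (or iterating Proposition~\ref{prop:eqnorws}) gives $\|\phi_j\|_{H^{km,k\mu}(X)}\asymp\kappa_j^{\,k}$ for every $k\in\N$. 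I will also freely use the continuous Sobolev--Kato embeddings $H^{r,\rho}(X)\hookrightarrow H^{r',\rho'}(X)$ for $r'\ge r$, $\rho'\ge\rho$, and $H^{r_0,\rho_0}(X)\hookrightarrow L^\infty(X)$, resp.\ $\hookrightarrow L^1(X)$, for $r_0,\rho_0$ large enough.

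\emph{Necessity.} Assume $f\in\dot{\mathscr{C}}^\infty(X)$, so $(f_j)_j$ is rapidly decreasing, and let $A$ be any scattering operator. Each $A\phi_j$ is continuous on $X$ (scattering operators preserve $\dot{\mathscr{C}}^\infty(X)$), and for a suitable $k=k(A,d)\in\N$ one has
$$
\sup_{x\in X}|(A\phi_j)(x)|\ \lesssim\ \|A\phi_j\|_{H^{r_0,\rho_0}(X)}\ \lesssim\ \|\phi_j\|_{H^{km,k\mu}(X)}\ \asymp\ \kappa_j^{\,k}\ \le\ K^{k}\langle j\rangle^{k\varrho}.
$$
Thus $M_j:=|f_j|\,\sup_{x\in X}|(A\phi_j)(x)|$ decays faster than any power of $\langle j\rangle$, so $\sum_j M_j<\infty$, and the Weierstrass $M$-test gives uniform (indeed absolute) convergence on $X$ of $\sum_j|f_j|\,|(A\phi_j)(x)|$.

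\emph{Sufficiency.} Conversely, assume $\sum_j|f_j|\,|(A\phi_j)(x)|$ converges uniformly on $X$ for every scattering operator $A$. Fix $M\in\N$ and take $A=\widetilde P^{\,M}$, so $A\phi_j=\widetilde\lambda_j^{\,M}\phi_j$; then
$$
g_M(x):=\sum_j|f_j|\,\kappa_j^{\,M}\,|\phi_j(x)|
$$
is a uniformly convergent series of nonnegative continuous functions, hence $g_M\in L^\infty(X)$. Since the $j$-th term of this series is $\le g_M(x)$, multiplying by $|\phi_j(x)|\ge 0$ yields the pointwise inequality $|f_j|\,\kappa_j^{\,M}\,|\phi_j(x)|^2\le g_M(x)\,|\phi_j(x)|$ on $X$; integrating and using $\|\phi_j\|_{L^2(X)}=1$ together with $\|\phi_j\|_{L^1(X)}\lesssim\|\phi_j\|_{H^{k'm,k'\mu}(X)}\asymp\kappa_j^{\,k'}\le K^{k'}\langle j\rangle^{k'\varrho}$ (with $k'$ independent of $M$),
$$
|f_j|\,\kappa_j^{\,M}\ =\ \int_X|f_j|\,\kappa_j^{\,M}\,|\phi_j(x)|^2\,dx\ \le\ \int_X g_M(x)\,|\phi_j(x)|\,dx\ \le\ \|g_M\|_{L^\infty(X)}\,\|\phi_j\|_{L^1(X)}\ \lesssim\ \|g_M\|_{L^\infty(X)}\,\langle j\rangle^{k'\varrho}.
$$
Hence $|f_j|\le C_M\,\kappa_j^{-M}\langle j\rangle^{k'\varrho}\le C_M'\,\langle j\rangle^{k'\varrho-M\varrho'}$ for all $j$; since $M$ is arbitrary, $(f_j)_j$ is rapidly decreasing, and Corollary~\ref{cor:schw} (applied, if needed, to $\widetilde P^{-1}$ in the original negative-order case) gives $f\in\dot{\mathscr{C}}^\infty(X)$.

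\emph{Expected main obstacle.} Necessity is routine: rapid decay of $(f_j)$ beats the fixed polynomial growth of $\|\phi_j\|_{H^{km,k\mu}}$. The real content is sufficiency, where purely uniform information must be upgraded to summability of the $f_j$. The mechanism is to test $g_M$ against $|\phi_j|$ and use orthonormality to isolate the single coefficient $|f_j|\kappa_j^{M}$; the subtlety to watch is that $L^2(X)$ has infinite volume, so $g_M\in L^\infty(X)$ does \emph{not} imply $g_M\in L^2(X)$, which is precisely why the estimate must be routed through the polynomially bounded $\|\phi_j\|_{L^1(X)}$ rather than through a Cauchy--Schwarz pairing of $g_M$ with $\phi_j$. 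Beyond this, the only points needing careful verification are the calculus facts behind the reduction from $m,\mu<0$ to positive orders (invertibility of $\widetilde P$ and classicality of $\widetilde P^{-1}$) and the two-sided polynomial bound on $\kappa_j$ in the merely normal, not necessarily positive, case.
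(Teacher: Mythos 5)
Your argument is correct, but it is not the route the paper takes: the paper offers no intrinsic proof of Theorem \ref{thm:unifconver} at all, asserting that it follows from the analogous statement on $\R^d$ in \cite{ABC25} by reduction through admissible local charts and partitions of unity. You instead prove the theorem directly on $X$, using only ingredients already stated in Section \ref{sec:spaces}: Corollary \ref{cor:schw} to translate membership in $\dot{\mathscr{C}}^\infty(X)$ into rapid decay of the coefficients, Theorem \ref{thm:exp_Hmmu} applied to $u=\phi_j$ to obtain $\|\phi_j\|_{H^{km,k\mu}(X)}\asymp|\widetilde{\lambda}_j|^{k}$, the Weyl-law bounds of Remark \ref{rem:polyboundlambda} transferred to the merely normal case via $\widetilde P^{*}\widetilde P$, and the weighted Sobolev embeddings into $L^\infty(X)$ and $L^1(X)$. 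The necessity direction (rapid decay of $f_j$ beats the fixed polynomial growth of $\sup_X|A\phi_j|$, then the Weierstrass test) and, above all, the sufficiency device of testing with $A=\widetilde P^{M}$, multiplying the termwise bound by $|\phi_j|$ and integrating against orthonormality so that the polynomially bounded $\|\phi_j\|_{L^1(X)}$ — rather than an $L^2$ pairing with $g_M$, which is indeed unavailable since the scattering volume of $X$ is infinite — isolates $|f_j|\,|\widetilde{\lambda}_j|^{M}$, is the classical Seeley/Gramchev--Pilipovi\'c--Rodino mechanism that also underlies the quoted proof in \cite{ABC25}; so the substance coincides, but your write-up is intrinsic and self-contained, which is a genuine gain over the chart-reduction reference, at the price of invoking a few standard calculus facts. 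The two points you flag are exactly the ones that must be secured if this is to replace the citation: (i) in the case $m,\mu<0$, the passage to $\widetilde P^{-1}$ needs the standard fact that an injective elliptic classical scattering operator with trivial $\ker\widetilde P^{*}$ is bijective on the Sobolev--Kato scale (Fredholm of index zero) and that its inverse lies in $\Psi_{\sccl}^{-m,-\mu}(X)$ (parametrix plus smoothing correction); (ii) the two-sided bounds $K'j^{\varrho'}\le|\widetilde{\lambda}_j|\le Kj^{\varrho}$ presuppose, as implicitly everywhere in the paper, that the eigenvalues are enumerated with nondecreasing modulus. Neither is a gap of substance, and with these remarks made explicit your proof is complete.
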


Let $f \in \Fc$. Then, for all $r, \rho \in \R$ there exist $\sigma = \sigma_{r\rho} > 1$, $C = C_{r\rho} > 0$, such that $\|f\|_{\sigma,C, r,\rho} <\infty$,
since $\dot{\mathscr{C}}^\infty(X)= \bigcap_{r,\rho \in \R} H^{r,\rho}(X)$
(as sets and topologically).
We define the Fourier coefficients of $f(t)$, $f\in\Fc(\T^n\times X)$, with respect to the eigenfunction basis $\{\phi_j\}$, as above, namely,
$$
f_j(t) \doteq (f(t), \phi_j)_{L^2(X)}, \quad j \in \N^\ast.
$$
Then, for every multi-index $\gamma \in \Z_+^n$, we have
$$
\partial_t^\gamma f_j(t) = \left( \partial_t^\gamma f(t), \phi_j \right)_{L^2(X)},
$$
hence
$$
\partial_t^\gamma f(t) = \sum_{j \in \N^\ast} \partial_t^\gamma f_j(t) \phi_j.
$$
We now state the main results about the corresponding Fourier expansions in $\Fc$ and $\Fc'$. We again omit the proofs, 
since they follow by arguments and computations analogous to those presented in \cite{ABC25} for the case $X=\R^d$.
\begin{thm}\label{thm:reprF}
Let $P \in \Psi_{\scc}^{m,\mu}(X)$ be a self-adjoint, elliptic scattering operator with order components $m, \mu>0$. Then $f \in \Fc(\T^n \times X)$ if and only if
$$
f(t) = \sum_{j \in \N^\ast} f_j(t) \phi_j,
$$
with $f_j(t)$ defined as above and satisfying the condition: 
\begin{equation}\label{eq:s}
			\tag{*}
			\hspace*{-3mm}
			\begin{aligned}
				\text{for }& \text{every $M \in \N$ there exist $\sigma\!=\!\sigma_M\!>\!1$ and $C\!=\!C_M\!>\!0$ such that}
				\\
				&\sup_{t \in \T^n}  \sum_{j \in \N^\ast} \widetilde{\lambda}_j^{2M} |\partial_t^\gamma f_j(t)|^2
				=
				\sup_{t \in \T^n}  \sum_{j \in \N^\ast} \widetilde{\lambda}_j^{2M} |f^\gamma_j(t)|^2 \leq C^{2(|\gamma|+1)} (\gamma!)^{2 \sigma} ,
				\\
				\text{for }&\text{every $\gamma \in \Z_+^n$}.
			\end{aligned}
		\end{equation}
\end{thm}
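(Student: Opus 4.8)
The plan is to characterise membership in $\Fc$ through the simultaneous Sobolev-Kato estimates that define it, and then transport each such estimate to the sequence side by means of Theorem~\ref{thm:exp_Hmmu}. First I would recall that, by the very definition~\eqref{eq:defF} together with the remark following it, $f \in \Fc$ if and only if for every pair $(r,\rho) \in \R^2$ there are $\sigma = \sigma_{r\rho} > 1$ and $C = C_{r\rho} > 0$ with $\|f\|_{\sigma,C,r,\rho} < \infty$, i.e.
$$
\sup_{t \in \T^n} \| \partial_t^\gamma f(t) \|_{H^{r,\rho}(X)} \leq C^{|\gamma|+1} (\gamma!)^\sigma, \qquad \gamma \in \Z_+^n.
$$
Since the orders $m,\mu$ of $P$ are positive, the family $\{H^{rm,r\mu}(X)\}_{r\in\N}$ is cofinal in $\{H^{r,\rho}(X)\}_{(r,\rho)\in\R^2}$ for the purpose of taking the intersection $\dot{\mathscr C}^\infty(X) = \bigcap_{r,\rho} H^{r,\rho}(X)$; hence it suffices to control $\|\partial_t^\gamma f(t)\|_{H^{rm,r\mu}(X)}$ for every $r = M \in \N$. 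This is exactly the hypothesis that will match the left-hand side of~\eqref{eq:s}.

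The core of the argument is then a pointwise-in-$t$ application of Theorem~\ref{thm:exp_Hmmu} to the distribution $\partial_t^\gamma f(t) \in (\dot{\mathscr C}^\infty(X))'$. That theorem gives, for each fixed $t \in \T^n$ and each $M \in \N$,
$$
\| \partial_t^\gamma f(t) \|_{H^{Mm,M\mu}(X)}^2 \asymp \sum_{j \in \N^\ast} |f_j^\gamma(t)|^2 \, \widetilde{\lambda}_j^{2M},
$$
with the implicit constants depending only on $M$ (and $P$), not on $t$ or $\gamma$; note that the Fourier coefficients of $\partial_t^\gamma f(t)$ are precisely $\partial_t^\gamma f_j(t) = f_j^\gamma(t)$, as recorded just before the statement. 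Taking the supremum over $t \in \T^n$ and combining with the Gevrey-Sobolev-Kato bound above yields, for suitable $\sigma_M, C_M$ (absorbing the equivalence constants into $C_M$),
$$
\sup_{t \in \T^n} \sum_{j \in \N^\ast} \widetilde{\lambda}_j^{2M} |f_j^\gamma(t)|^2 \leq C_M^{2(|\gamma|+1)} (\gamma!)^{2\sigma_M},
$$
which is condition~\eqref{eq:s}. For the converse, given~\eqref{eq:s} one runs the same chain of equivalences backwards to recover $\sup_t \|\partial_t^\gamma f(t)\|_{H^{Mm,M\mu}(X)} \leq C_M'^{|\gamma|+1}(\gamma!)^{\sigma_M}$ for all $M$, and then uses the cofinality of $\{(Mm,M\mu)\}_{M\in\N}$ again, together with the interpolation/inclusion properties of the Sobolev-Kato scale (part (3) of the Proposition above), to obtain the estimate for arbitrary $(r,\rho)$; one should also check that the series $\sum_j f_j(t)\phi_j$ indeed converges to an element of $\mathscr C^\infty(\T^n; (\dot{\mathscr C}^\infty(X))')$, which follows from Corollary~\ref{cor:schw} applied at each $t$ together with uniform-in-$t$ control of $t$-derivatives provided by~\eqref{eq:s}.

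The step I expect to be the main obstacle is the uniformity and bookkeeping of constants through the passage between the two sides: the norm equivalence in Theorem~\ref{thm:exp_Hmmu} introduces multiplicative constants $c_M, C_M$ that grow with $M$, and one must verify that, since $\sigma_M$ and $C_M$ in~\eqref{eq:s} are allowed to depend on $M$, these equivalence constants can be harmlessly absorbed — in particular that replacing $C_M$ by, say, $c_M^{-1/2} C_M$ or $C_M^{1/(|\gamma|+1)}\cdot(\text{const})$ still leaves a bound of the required form $C^{2(|\gamma|+1)}(\gamma!)^{2\sigma}$. A second, more technical point is the justification that differentiation in $t$ commutes with the eigenfunction expansion and with the $L^2(X)$-pairing uniformly on $\T^n$, so that $\partial_t^\gamma f_j(t) = (\partial_t^\gamma f(t), \phi_j)_{L^2(X)}$ and the map $t \mapsto \partial_t^\gamma f(t) \in H^{r,\rho}(X)$ is genuinely well-defined and continuous; this is where one invokes that $u \in \gh^C$ forces $t \mapsto \partial_t^\gamma u(t) \in H^{r,\rho}(X)$ to be well-defined, as noted in the bulleted remarks after~\eqref{eq:GHnorm}. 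Once these uniformity issues are settled, the equivalence~\eqref{eq:s} is a direct consequence of Theorem~\ref{thm:exp_Hmmu} applied slicewise in $t$, exactly as in the $X = \R^d$ case of~\cite{ABC25}.
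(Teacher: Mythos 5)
Your argument is correct and is essentially the intended one: the paper omits the proof, deferring to the $X=\R^d$ case of \cite{ABC25}, where the equivalence is obtained exactly as you do, by applying the eigenfunction characterisation of the Sobolev--Kato norms (Theorem \ref{thm:exp_Hmmu}) slicewise in $t$ to $\partial_t^\gamma f(t)$, using that the equivalence constants depend only on $M$ and $P$, and exploiting cofinality of the scale $\{(Mm,M\mu)\}_{M\in\N}$ in $\{(r,\rho)\}$ together with the inclusion properties of the spaces. The uniformity and convergence points you flag are indeed the only routine details to check, and your treatment of them is adequate.
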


\begin{cor}\label{cor:estpow}
		Let $P \in \Psi_{\sccl}^{m,\mu}(X)$ be a positive, self-adjoint, classical, elliptic scattering operator with order components $m, \mu>0$.    
 		In this case, condition \eqref{eq:s} in Theorem \ref{thm:reprF} is equivalent to the condition
		\begin{equation}\label{eq:sclsa}
			\tag{**}
			\hspace*{-3mm}
			\begin{aligned}
				\text{for every  } &\text{$M \in \N$ there exist $\sigma\!=\!\sigma_M\!>\!1$ and $C\!=\!C_M\!>\!0$ such that}
				\\
				&\sup_{t \in \T^n} |\partial^\gamma_t f_j(t)| =
				\sup_{t \in \T^n} | f_j^\gamma(t)| \leq C^{|\gamma|+1} (\gamma!)^{\sigma} |\widetilde{\lambda}_j|^{-M},
				\\
				\text{for every } & j \in \N^\ast, \gamma \in \Z_+^n.
			\end{aligned}
		\end{equation}
	\end{cor}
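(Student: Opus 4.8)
The plan is to show that the two conditions are equivalent by exploiting the two-sided estimate $\|u\|_{H^{rm,r\mu}(X)}^2 \asymp \sum_{j} |u_j|^2 |\widetilde{\lambda}_j|^{2r}$ from Theorem \ref{thm:exp_Hmmu}, together with the polynomial growth of the eigenvalues from Remark \ref{rem:polyboundlambda} and Corollary \ref{cor:exp_Hmmu}, which applies precisely because $P$ is here assumed positive, self-adjoint, classical and elliptic. Throughout, I would apply both conditions to $g(t) = \partial_t^\gamma f(t)$ with coefficients $g_j(t) = \partial_t^\gamma f_j(t) = f_j^\gamma(t)$, so the estimates on all derivatives follow once the estimate for $\gamma = 0$ is handled (with the Gevrey factor $(\gamma!)^\sigma$ and the power $C^{|\gamma|+1}$ transported along).

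First I would prove that \eqref{eq:sclsa} implies \eqref{eq:s}. Fix $M \in \N$; by \eqref{eq:sclsa} applied with the parameter $M' = M + M_0$ (where $M_0$ is a margin to be fixed), we obtain $\sup_t |f_j^\gamma(t)| \leq C^{|\gamma|+1}(\gamma!)^\sigma |\widetilde{\lambda}_j|^{-M-M_0}$ for each $j$ and $\gamma$. Then
$$
\sup_{t\in\T^n} \sum_{j\in\N^\ast} \widetilde{\lambda}_j^{2M} |f_j^\gamma(t)|^2 \leq C^{2(|\gamma|+1)}(\gamma!)^{2\sigma} \sum_{j\in\N^\ast} \widetilde{\lambda}_j^{-2M_0},
$$
and the series $\sum_j \widetilde{\lambda}_j^{-2M_0}$ converges once $M_0$ is chosen large enough, since by Remark \ref{rem:polyboundlambda} one has $\widetilde{\lambda}_j \geq K' j^{\varrho'}$ with $\varrho' > 0$, so it suffices to take $2 M_0 \varrho' > 1$. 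Absorbing this (finite) sum into the constant yields \eqref{eq:s} with the same $\sigma$ and a suitably enlarged $C$.

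For the converse, \eqref{eq:s} $\Rightarrow$ \eqref{eq:sclsa}, I would use the pointwise bound on individual coefficients coming from positivity of the terms: for each fixed index $j_0$,
$$
\widetilde{\lambda}_{j_0}^{2M} |f_{j_0}^\gamma(t)|^2 \leq \sum_{j\in\N^\ast} \widetilde{\lambda}_j^{2M} |f_j^\gamma(t)|^2 \leq C^{2(|\gamma|+1)}(\gamma!)^{2\sigma},
$$
whence $\sup_t |f_{j_0}^\gamma(t)| \leq C^{|\gamma|+1}(\gamma!)^\sigma |\widetilde{\lambda}_{j_0}|^{-M}$, which is exactly \eqref{eq:sclsa}. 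This direction is essentially immediate because a single term of a convergent series of nonnegative reals is bounded by the whole sum.

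The main obstacle is bookkeeping rather than anything deep: one must verify that the summability exponent $M_0$ can be chosen uniformly (independent of $M$ and $\gamma$) so that the constants $\sigma_M, C_M$ produced by \eqref{eq:sclsa} transfer correctly into \eqref{eq:s} without disturbing the quantifier structure — for every $M$ there exist $\sigma, C$. Since the convergence of $\sum_j \widetilde{\lambda}_j^{-2M_0}$ depends only on $M_0$ and the fixed eigenvalue asymptotics (Corollary \ref{cor:exp_Hmmu}), and since enlarging $M$ in \eqref{eq:sclsa} only strengthens the hypothesis, this is straightforward. One should also note that the equivalence $\widetilde{\lambda}_j \asymp \lambda_j$ for $j > N$ (with $\widetilde{\lambda}_j = 1$ for $j \leq N$) makes the finitely many modified eigenvalues harmless in both estimates.
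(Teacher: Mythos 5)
Your argument is correct and is essentially the proof the paper leaves implicit (it is omitted here with a reference to the analogous result in \cite{ABC25}): the direction \eqref{eq:s} $\Rightarrow$ \eqref{eq:sclsa} is the trivial single-term bound, and \eqref{eq:sclsa} $\Rightarrow$ \eqref{eq:s} follows by applying the hypothesis with $M+M_0$ and summing, the convergence of $\sum_j \widetilde{\lambda}_j^{-2M_0}$ being exactly where the positivity/classicality hypotheses enter via the polynomial lower bound $\widetilde{\lambda}_j \gtrsim j^{\varrho'}$ of Remark \ref{rem:polyboundlambda}. Your bookkeeping (fixed $M_0$ depending only on $\varrho'$, absorption of the finite sum into $C$ using $|\gamma|+1\ge 1$, and the harmlessness of the finitely many eigenvalues replaced by $1$) is sound and preserves the quantifier structure.
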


	\begin{cor}\label{cor:seq}
		Assume that $f \in \Fc(\T ^n\times X)$. 
		\begin{itemize} 
			\item[ i)] Under the hypotheses of Theorem \ref{thm:reprF},
				for every $M \in \N$ there exist $\sigma=\sigma_M>1$ and $C=C_M>0$ such that
				\begin{equation}\label{eq-seq}
					\|f_j\|_{\mathcal{G}^{\sigma,C}(\T^n)} \leq C |\widetilde{\lambda}_j|^{-M}, \quad j \in \N^\ast.
				\end{equation}
			\item[ii)] Under the hypotheses of Corollary \ref{cor:estpow},  
				for every $M \in \N$ there exist $\sigma=\sigma_M>1$ and $C=C_M>0$ such that
				\[
					\|f_j\|_{\mathcal{G}^{\sigma,C}(\T^n)} \leq C j^{-M\varrho}, \ j \to \infty,
				\]
				for some $\varrho>0$. In particular, $\{f_j\}_{j \in \N^\ast} \subset \mathcal{G}^{\sigma_1}(\T^n)$. 
		\end{itemize}
\end{cor}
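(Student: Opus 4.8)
The plan is to derive Corollary \ref{cor:seq} directly from Theorem \ref{thm:reprF} and Corollary \ref{cor:estpow} by unpacking the definition of the Gevrey-norm $\|\cdot\|_{\mathcal{G}^{\sigma,C}(\T^n)}$ and exploiting the fact that the bounds in \eqref{eq:s} and \eqref{eq:sclsa} are uniform in $t$ but, crucially, uniform in (or summable over) the index $j$. For part i), fix $M\in\N$ and let $\sigma=\sigma_M>1$, $C=C_M>0$ be the constants provided by condition \eqref{eq:s}. First I would observe that, since every term of the sum $\sum_{j}\widetilde\lambda_j^{2M}|f_j^\gamma(t)|^2$ is nonnegative, for each fixed $j_0\in\N^\ast$ we have
$$
\widetilde\lambda_{j_0}^{2M}\,\sup_{t\in\T^n}|\partial_t^\gamma f_{j_0}(t)|^2 \le \sup_{t\in\T^n}\sum_{j\in\N^\ast}\widetilde\lambda_j^{2M}|f_j^\gamma(t)|^2 \le C^{2(|\gamma|+1)}(\gamma!)^{2\sigma},
$$
so that taking square roots and dividing by $\widetilde\lambda_{j_0}^M$ gives $\sup_{t\in\T^n}|\partial_t^\gamma f_{j_0}(t)|\le C^{|\gamma|+1}(\gamma!)^\sigma\,|\widetilde\lambda_{j_0}|^{-M}$ for every $\gamma\in\Z_+^n$ and every $j_0$. (Note this is exactly the content of \eqref{eq:sclsa}, so in the general setting of Theorem \ref{thm:reprF} one still obtains the pointwise-in-$j$ bound \eqref{eq:sclsa}, simply by dropping all but one summand.) Then I would multiply through by $C^{-|\gamma|}(\gamma!)^{-\sigma}$ and take the supremum over $\gamma\in\Z_+^n$: by the very definition of the norm on $\mathcal{G}^{\sigma,C}(\T^n)$ recalled in Subsection 2.1, the left-hand side becomes $\|f_{j_0}\|_{\mathcal{G}^{\sigma,C}(\T^n)}$, while the right-hand side is $C\,|\widetilde\lambda_{j_0}|^{-M}$. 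Since $j_0$ was arbitrary, this establishes \eqref{eq-seq}.

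For part ii), I would start from the conclusion of part i), noting that the hypotheses of Corollary \ref{cor:estpow} (positive, self-adjoint, classical, elliptic scattering operator with $m,\mu>0$) are a special case of those of Theorem \ref{thm:reprF}, so \eqref{eq-seq} holds. It then remains to convert the bound $|\widetilde\lambda_j|^{-M}$ into a bound of the form $j^{-M\varrho}$. This is precisely where Remark \ref{rem:polyboundlambda} enters: under those hypotheses (and using the asymptotics of the eigenvalues recorded before the remark), there are constants $K'>0$ and an exponent $\varrho'>0$ — which one may simply rename $\varrho$ — such that $\widetilde\lambda_j=\lambda_j\ge K' j^{\varrho'}$ for all $j$ large enough. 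Hence $|\widetilde\lambda_j|^{-M}\le (K')^{-M} j^{-M\varrho}$, and absorbing the constant $(K')^{-M}$ into $C$ (renaming if needed, still allowed to depend on $M$) yields $\|f_j\|_{\mathcal{G}^{\sigma,C}(\T^n)}\le C\,j^{-M\varrho}$ as $j\to\infty$. Finally, to get the last sentence, I would specialize to $M=1$: taking $\sigma_1=\sigma_1$ and the corresponding $C_1$, we obtain $\|f_j\|_{\mathcal{G}^{\sigma_1,C_1}(\T^n)}<\infty$ for every $j$, which means in particular $f_j\in\mathcal{G}^{\sigma_1,C_1}(\T^n)\subset\mathcal{G}^{\sigma_1}(\T^n)$, so $\{f_j\}_{j\in\N^\ast}\subset\mathcal{G}^{\sigma_1}(\T^n)$.

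The argument is essentially bookkeeping, so there is no serious obstacle; the only point requiring a little care is the logical dependence between the conditions. One must be sure that the pointwise-in-$j$ estimate \eqref{eq:sclsa} is a genuine consequence of the summed estimate \eqref{eq:s} in the general (merely self-adjoint, not necessarily classical or positive) setting of Theorem \ref{thm:reprF} — this is immediate by nonnegativity of the summands, as indicated above, and does not need the reverse implication nor the Weyl-law asymptotics. Another minor subtlety is that the constants $\sigma_M$ and $C_M$ in all three statements are allowed to depend on $M$ and may change from line to line; I would state once at the outset that $C$ denotes a positive constant depending only on $M$ (and on $P$, $r$, $\rho$) whose value may increase in the course of the proof, so that absorbing multiplicative constants like $(K')^{-M}$ is unproblematic. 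With these conventions in place the proof is just the three short steps above.
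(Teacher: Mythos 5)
Your proposal is correct and follows exactly the intended route: the paper omits the proof (referring to the analogous computations in \cite{ABC25}), and the argument there is precisely this bookkeeping — part i) by dropping all but one nonnegative summand in condition \eqref{eq:s} (equivalently reading off \eqref{eq:sclsa}) and recognising the $\mathcal{G}^{\sigma,C}(\T^n)$-norm, part ii) by combining this with the lower eigenvalue bound $\lambda_j\gtrsim j^{\varrho'}$ from the Weyl-law asymptotics (Remark \ref{rem:polyboundlambda}) and finite-dimensionality of $\ker P$. The only cosmetic slip is writing ``dividing by $\widetilde\lambda_{j_0}^{M}$'' where $|\widetilde\lambda_{j_0}|^{M}$ is meant in the general self-adjoint case, which does not affect the argument.
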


	\begin{cor}\label{cor:suffhypo}
		Under the hypotheses of Corollary \ref{cor:estpow}, let $\{f_j\}_{j \in \N^\ast}\subset\mathcal{G}^{\sigma,C}(\T^n)$ 
		be a sequence with the property that for every $M' \in \N$ there exists $B=B_{M'}>0$ such that
		\[
			\|f_j\|_{\mathcal{G}^{\sigma,C}(\T^n)} \leq B j^{-M'}, \quad j \to \infty.
		\]
		Then, setting $f(t)=\sum_{j \in \N^\ast} f_j (t) \phi_j$, $t\in\T^n$, it follows $f \in \Fc(\T ^n\times X)$.
	\end{cor}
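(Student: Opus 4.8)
The plan is to show that the series $f(t)=\sum_{j}f_j(t)\phi_j$ defines an element of $\Fc(\T^n\times X)=\bigcap_{r,\rho}\mathcal{G}\mathcal{H}_{r,\rho}$ by verifying, for each fixed pair $(r,\rho)\in\R^2$, that $f\in\mathcal{G}^{\sigma}\mathcal{H}^{C'}_{r,\rho}(\T^n\times X)$ for a suitable $C'=C'_{r,\rho}>0$. By \eqref{sig,C,sobolev} it suffices to treat $(r,\rho)=(rm,r\mu)$ for arbitrarily large $r\in\N$, since these exhaust all weight pairs from below. So the computation reduces to estimating $\sup_{t\in\T^n}\|\partial_t^\gamma f(t)\|_{H^{rm,r\mu}(X)}$ and showing it is bounded by $(C')^{|\gamma|+1}(\gamma!)^{\sigma}$.

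First I would use the Sobolev-space expansion \eqref{eq:Hmmuseries} from Theorem \ref{thm:exp_Hmmu}: since $\partial_t^\gamma f(t)=\sum_j \partial_t^\gamma f_j(t)\,\phi_j$ with coefficients $\partial_t^\gamma f_j(t)$, one has
\begin{equation*}
\sup_{t\in\T^n}\|\partial_t^\gamma f(t)\|_{H^{rm,r\mu}(X)}^2 \asymp \sup_{t\in\T^n}\sum_{j\in\N^\ast}|\widetilde\lambda_j|^{2r}\,|\partial_t^\gamma f_j(t)|^2 \le \sum_{j\in\N^\ast}|\widetilde\lambda_j|^{2r}\,\|f_j\|_{\mathcal{G}^{\sigma,C}(\T^n)}^2\,C^{2|\gamma|}(\gamma!)^{2\sigma},
\end{equation*}
where in the last step I bound $|\partial_t^\gamma f_j(t)|\le \|f_j\|_{\mathcal{G}^{\sigma,C}(\T^n)}\,C^{|\gamma|}(\gamma!)^\sigma$ uniformly in $t$, by definition of the norm on $\mathcal{G}^{\sigma,C}(\T^n)$. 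It remains to show that $\sum_j |\widetilde\lambda_j|^{2r}\,\|f_j\|_{\mathcal{G}^{\sigma,C}(\T^n)}^2<\infty$ for every $r$.

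Next, since we are under the hypotheses of Corollary \ref{cor:estpow}, $P$ is positive, self-adjoint and classical, so Remark \ref{rem:polyboundlambda} applies and gives $\widetilde\lambda_j\le K j^{\varrho}$ for some $K,\varrho>0$ and all large $j$. Combining this with the hypothesis $\|f_j\|_{\mathcal{G}^{\sigma,C}(\T^n)}\le B_{M'} j^{-M'}$, the $j$-th term is $O(j^{2r\varrho}\cdot j^{-2M'})=O(j^{2r\varrho-2M'})$; choosing $M'=M'(r)$ large enough that $2M'-2r\varrho>1$ makes the series convergent, with a finite sum bounded by some constant $A_r>0$. Putting the pieces together yields $\sup_{t\in\T^n}\|\partial_t^\gamma f(t)\|_{H^{rm,r\mu}(X)}\le A_r^{1/2}\,C^{|\gamma|}(\gamma!)^\sigma$, which is of the form $(C')^{|\gamma|+1}(\gamma!)^\sigma$ with $C'=\max\{C,A_r^{1/2}\}$; hence $f\in\mathcal{G}^{\sigma}\mathcal{H}^{C'}_{rm,r\mu}$. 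One should also check that $f$ genuinely lies in $\mathscr{C}^\infty(\T^n;(\dot{\mathscr{C}}^\infty(X))')$—indeed the estimates just obtained show the series and all its $t$-derivatives converge in $H^{rm,r\mu}(X)$ uniformly in $t$, for every $r$, so $f(t)\in\bigcap_r H^{rm,r\mu}(X)=\dot{\mathscr{C}}^\infty(X)$ and $t\mapsto f(t)$ is smooth with values there.

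The only mild subtlety—the "hard part," such as it is—is bookkeeping the order of quantifiers: the index $\sigma$ and constant $C$ are fixed \emph{in advance} in the statement (the sequence $\{f_j\}$ lives in a single $\mathcal{G}^{\sigma,C}(\T^n)$), so the output exponent is the same $\sigma$ for every $(r,\rho)$, and only $C'$ is allowed to depend on $(r,\rho)$; this is exactly what the projective-limit description of $\Fc$ in the line following \eqref{eq:defF} requires, so the conclusion $f\in\Fc$ follows. No genuine analytic obstacle arises: everything reduces to the polynomial eigenvalue bound of Remark \ref{rem:polyboundlambda} absorbing the super-polynomial decay of $\|f_j\|_{\mathcal{G}^{\sigma,C}}$.
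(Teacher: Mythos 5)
Your argument is correct, and it is essentially the proof the paper has in mind: the paper omits it, delegating to the analogous computation in \cite{ABC25}, and what you wrote is exactly that computation — term-by-term bound $|\partial_t^\gamma f_j(t)|\le \|f_j\|_{\mathcal{G}^{\sigma,C}}C^{|\gamma|}(\gamma!)^\sigma$, the norm equivalence \eqref{eq:Hmmuseries}, the polynomial eigenvalue bound of Remark~\ref{rem:polyboundlambda} absorbing the superpolynomial decay of $\|f_j\|_{\mathcal{G}^{\sigma,C}}$, and the reduction to the weights $(rm,r\mu)$ via \eqref{sig,C,sobolev}, together with the (needed, and correctly flagged) verification that the series and its $t$-derivatives converge uniformly in $H^{rm,r\mu}(X)$ so that $f\in\mathscr{C}^\infty(\T^n;(\dot{\mathscr{C}}^\infty(X))')$ and its Fourier coefficients are indeed the $f_j$. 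The only remark worth adding is that the middle step can be compressed: your estimate is precisely condition \eqref{eq:s} of Theorem~\ref{thm:reprF} (with $\sigma_M=\sigma$ for all $M$), so once the convergence of the series is established one may conclude by citing that theorem instead of re-deriving the Sobolev--Kato norm equivalence for each $r$.
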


	\begin{thm}\label{thm:unifconvert}
	Let $P \in \Psi_{\scc}^{m,\mu}(X)$ be a self-adjoint, elliptic scattering operator with order components $m, \mu>0$ or $m, \mu<0$,
		and let $\{\phi_j\}$ be a corresponding orthonormal basis of eigenfunctions. 
		\begin{enumerate}
		\item\label{point:implltr} For any $f \in \Fc$ the series 
		\begin{equation}\label{eq:convunscht}
			\sum_{j \in \mathbb N} \left|\partial^\gamma_t(f(t),\phi_j)_{L^2(\R^d)} \right| \, |A\phi_j(x)| , \quad \gamma \in \Z_+^n,
		\end{equation}
		converge uniformly on $\T^n\times X$ for every scattering operator $A$ and there exist $B=B_{APnd}>0$, 
		$\sigma=\sigma_{APnd}>1$, $C=C_{APnd}>0$, depending only on $A,P,n$, and $d$,
		such that the sums $S_{\gamma A}$ of \eqref{eq:convunscht} satisfy the condition
		\begin{equation}\label{eq:sumconvunift}
			S_{\gamma A}(t,x)\le B C^{|\gamma|+1} (\gamma!)^{\sigma}, \quad t\in\T^n, x\in X.
		\end{equation}
		\item If we additionally assume that $P \in \Psi_{\sccl}^{m,\mu}(X)$, $m,\mu>0$, is positive and classical, then, for $f\in\cinf(\T^n,L^2(X))$ the converse of the implication
		in point \eqref{point:implltr} above holds true as well.
		\end{enumerate} 
	\end{thm}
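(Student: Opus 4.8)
The plan is to obtain both parts directly from the Fourier‑side characterisations already established, namely Theorem~\ref{thm:reprF} for part~(1) and, jointly with it, the scalar statement Theorem~\ref{thm:unifconver} for the converse in part~(2); the only analytic tools are a Cauchy--Schwarz estimate in the spectral index and the Sobolev embedding $H^{\tau,\theta}(X)\hookrightarrow\mathscr{C}(X)$, valid once $\tau,\theta>d/2$. I treat the case $m,\mu>0$; the case $m,\mu<0$ reduces to it by replacing $P$ with $\widetilde{P}^{-1}\in\Psi_{\scc}^{-m,-\mu}(X)$, which is still elliptic and self-adjoint, has positive order components and the same eigenfunctions $\phi_j$, now with eigenvalues $\widetilde{\lambda}_j^{-1}\to\infty$, so that every estimate below survives the substitution $\widetilde{\lambda}_j\leftrightarrow\widetilde{\lambda}_j^{-1}$. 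The one point worth isolating is the following uniform bound: if $A$ is a scattering operator with order components $(m_A,\mu_A)$ and $M\in\N$ satisfies $Mm-m_A>d/2$ and $M\mu-\mu_A>d/2$, then, since $\widetilde{P}^{-M}\colon L^2(X)=H^{0,0}(X)\to H^{Mm,M\mu}(X)$ is bounded ($\widetilde{P}$ being elliptic, self-adjoint, invertible with positive orders), the operator $B_M:=A\widetilde{P}^{-M}$ maps $L^2(X)$ boundedly into $H^{Mm-m_A,M\mu-\mu_A}(X)\hookrightarrow\mathscr{C}(X)$; hence for each $x\in X$ the evaluation $u\mapsto(B_Mu)(x)$ is a bounded functional on $L^2(X)$, represented by $g_x$ with $\|g_x\|_{L^2}\le\|B_M\|_{L^2\to L^\infty}$, and since $\{\phi_j\}$ is orthonormal and $\widetilde{P}^{-M}\phi_j=\widetilde{\lambda}_j^{-M}\phi_j$, Parseval gives $\sum_{j\in\N^\ast}\widetilde{\lambda}_j^{-2M}|A\phi_j(x)|^2=\sum_j|B_M\phi_j(x)|^2=\|g_x\|_{L^2}^2\le K_M:=\|B_M\|_{L^2\to L^\infty}^2$, uniformly in $x$.

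For part~(1) I would fix $f\in\Fc$, a scattering operator $A$, and an admissible $M$ as above; Theorem~\ref{thm:reprF} provides $\sigma>1$, $C>0$ (depending on $f,A,P,d$ but not on $\gamma,t,x$) with $\sup_t\sum_j\widetilde{\lambda}_j^{2M}|\partial_t^\gamma f_j(t)|^2\le C^{2(|\gamma|+1)}(\gamma!)^{2\sigma}$ for all $\gamma\in\Z_+^n$. Splitting $|\partial_t^\gamma f_j(t)|\,|A\phi_j(x)|=\bigl(\widetilde{\lambda}_j^{M}|\partial_t^\gamma f_j(t)|\bigr)\bigl(\widetilde{\lambda}_j^{-M}|A\phi_j(x)|\bigr)$ and applying Cauchy--Schwarz together with the uniform bound above gives $S_{\gamma A}(t,x)\le K_M^{1/2}C^{|\gamma|+1}(\gamma!)^{\sigma}$, which is exactly \eqref{eq:sumconvunift} with $B=K_M^{1/2}$. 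Uniform convergence of \eqref{eq:convunscht} on $\T^n\times X$ follows by applying the same Cauchy--Schwarz to the tails $\sum_{j>N}$: since $\widetilde{\lambda}_j\to\infty$, given $\varepsilon>0$ one may choose $N$ with $\widetilde{\lambda}_j^{-2}<\varepsilon$ for $j>N$, and then, using Theorem~\ref{thm:reprF} with $M+1$ in place of $M$ for the first factor and $K_M$ for the second, the tail of $S_{\gamma A}$ is $O(\varepsilon^{1/2})$ uniformly in $(t,x)$. As each partial sum is continuous on $\T^n\times X$, so is the limit $S_{\gamma A}$, and \eqref{eq:sumconvunift} holds.

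For part~(2) I would run the argument backwards. Because $f\in\cinf(\T^n,L^2(X))$, for each $t$ and $\gamma$ the $\{\phi_j\}$-coefficients of $\partial_t^\gamma f(t)\in L^2(X)$ are $\partial_t^\gamma f_j(t)=(\partial_t^\gamma f(t),\phi_j)_{L^2}$, and Theorem~\ref{thm:unifconver}, whose hypothesis is precisely our standing assumption at fixed $t$, gives $\partial_t^\gamma f(t)\in\dot{\mathscr{C}}^\infty(X)$, with eigenexpansion converging in $\dot{\mathscr{C}}^\infty(X)$; in particular $\partial_t^\gamma f(t)\in H^{Mm,M\mu}(X)$ for all $M$. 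Fix once and for all $w\in\cinf(X)$ with $w\ge1$ and $w^{-1}\in L^2(X)$ (for instance $w=\rho_X^{-\ell}$ in a collar of $\partial X$ with $2\ell>d$); multiplication by $w$ is an elliptic scattering operator, hence so is $A_M:=w\,\widetilde{P}^M$ for every $M\in\N$. Applying the continuous operator $w\,\widetilde{P}^M$ termwise to the eigenexpansion of $\partial_t^\gamma f(t)$ identifies $w(x)\,(\widetilde{P}^M\partial_t^\gamma f(t))(x)$ with $\sum_j\widetilde{\lambda}_j^{M}\partial_t^\gamma f_j(t)\,w\phi_j(x)=\sum_j\partial_t^\gamma f_j(t)\,A_M\phi_j(x)$, so the bound assumed for $A=A_M$ gives $\|w\,\widetilde{P}^M\partial_t^\gamma f(t)\|_{L^\infty(X)}\le B\,C^{|\gamma|+1}(\gamma!)^{\sigma}$. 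Combining this with $\|u\|_{L^2}^2=\int_X w^{-2}|wu|^2\,dg\le\|w^{-1}\|_{L^2}^2\|wu\|_{L^\infty}^2$ for $u=\widetilde{P}^M\partial_t^\gamma f(t)$, and with the Parseval identity $\|\widetilde{P}^M\partial_t^\gamma f(t)\|_{L^2}^2=\sum_j\widetilde{\lambda}_j^{2M}|\partial_t^\gamma f_j(t)|^2$ (self-adjointness of $\widetilde{P}^M$), yields $\sup_t\sum_j\widetilde{\lambda}_j^{2M}|\partial_t^\gamma f_j(t)|^2\le\|w^{-1}\|_{L^2}^2\,B^2\,C^{2(|\gamma|+1)}(\gamma!)^{2\sigma}$ for all $\gamma$; since $M$ is arbitrary, this is condition~\eqref{eq:s} of Theorem~\ref{thm:reprF}, hence $f\in\Fc$.

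The step I expect to be the genuine obstacle is this last conversion in part~(2), from the $L^\infty$ control supplied by the hypothesis to the $L^2$ estimate demanded by \eqref{eq:s}: on the non-compact $X$ this is false without an integrability gain, which is exactly why one must insert the auxiliary weight $w$ with $w^{-1}\in L^2(X)$ and verify that $w\,\widetilde{P}^M$ is a scattering operator covered by the hypothesis. The remaining ingredients — the mapping properties of $B_M$, the identification of the series sum with $w\,\widetilde{P}^M\partial_t^\gamma f(t)$, and the uniform tail estimate — are routine, and are transported from the $\R^d$ treatment of \cite{ABC25} via admissible local charts and partitions of unity.
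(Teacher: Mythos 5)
Your proposal is correct and takes essentially the approach the paper intends: the paper omits the proof of this theorem, deferring to the analogous computations in \cite{ABC25}, and your argument --- Cauchy--Schwarz in $j$ combined with the uniform bound $\sum_j\widetilde{\lambda}_j^{-2M}|A\phi_j(x)|^2\le K_M$ coming from $L^2\to L^\infty$ boundedness of $A\widetilde{P}^{-M}$ via the Sobolev--Kato embedding, and, for the converse, the weighted operator $A_M=w\,\widetilde{P}^{M}$ with $w^{-1}\in L^2(X)$ to convert the sup-norm control into condition \eqref{eq:s} --- is exactly that skeleton, carried out intrinsically on $X$ rather than by reduction to $\R^d$ through charts. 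The only cosmetic point is that your constants in \eqref{eq:sumconvunift} necessarily also depend on $f$ (through Theorem \ref{thm:reprF}), which is clearly what the statement intends despite the subscript $B_{APnd}$, and your reduction of the case $m,\mu<0$ to positive orders via $\widetilde{P}^{-1}$ tacitly uses that the inverse of an invertible elliptic scattering operator stays in the calculus, a standard fact worth one line.
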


Consider now $\theta \in \Fc'$ and  $M \in \Z$ such that $\theta \in \mathcal{G}\mathcal{H}_{Mm,M\mu}'$. 
	For any $\psi \in \mathcal{G}^{\sigma}(\T^n)$ we consider $\psi \otimes \overline{\phi_j} \in \mathcal{G}^{\sigma}\mathcal{H}_{Mm,M\mu}$ by setting
	\begin{align*}
		\T^n \ni t \mapsto \psi(t)\overline{\phi_j} \colon  \R^d \to \C \colon x \mapsto \psi(t)\overline{\phi_j(x)}, \quad j \in \N^\ast.
	\end{align*}
	Then, there are well-defined linear maps $\theta_j: \mathcal{G}^{\sigma}(\T^n) \to \C, $ given by
	\begin{equation}\label{partial_in_F'}
		\langle  \theta_j  ,  \psi \rangle \doteq 
		\langle  \theta ,  \psi \otimes \overline{\phi_j}  \rangle, \quad j \in \N^\ast.
	\end{equation}
	We claim that  $\theta_j \in (\mathcal{G}^{\sigma}(\T^n))'$. 
	Indeed, given any constant $C>0$ there exists $B=B_{\sigma C}>0$ 
	such that
	\begin{equation}\label{eq:thetaseriesest}
		\begin{aligned}
			|\langle  \theta_j  ,  \psi  \rangle| & = 
			|\langle  \theta  ,  \psi \otimes \overline{\phi_j}  \rangle|  \\
			&\leq B
			\sup_{\gamma \in \Z_{+}^n}\left\{
			C^{-|\gamma|}(\gamma!)^{-\sigma} \sup_{t \in \T^n} \|\partial_t^{\gamma} \psi(t)\overline{\phi_j}\|_{H^{Mm,M\mu}}
			\right\} \\
			& = B \|\overline{\phi_j}\|_{H^{Mm,M\mu}}
			\sup_{\gamma \in \Z_{+}^n}\left\{
			C^{-|\gamma|}(\gamma!)^{-\sigma} \sup_{t \in \T^n} |\partial_t^{\gamma}\psi(t)|\right\}
			\\
			& = B \|\phi_j\|_{H^{Mm,M\mu}}
			\|\psi\|_{\mathcal{G}^{\sigma,C}} \le \widetilde{B} |\widetilde{\lambda}_j|^M \|\psi\|_{\mathcal{G}^{\sigma,C}}, \quad j \in \N^\ast,
	\end{aligned} 
	\end{equation}
	which proves the assertion. This allows us to decompose any $\theta\in\Fc'$ into a series of tensor products,
	whose first factors satisfy the estimates \eqref{eq:thetaseriesest}, as shown in the subsequent Lemma \ref{lem:thetaseries}. 
	Next, we state the convergence result in $\Fc'$.
    
	\begin{lemma}\label{lem:thetaseries}
		Let $\theta \in \Fc'(\T ^n\times X)$. Then
		\begin{equation}\label{eq:defserie}
			\theta=\sum_{j \in \N^\ast}\theta_j\otimes\phi_j,
		\end{equation}
		with $\{\theta_j\}_{j \in \N^\ast} \subset (\mathcal{G}^{\sigma}(\T^n))'$ given by \eqref{partial_in_F'}, and so satisfying \eqref{eq:thetaseriesest}.
	\end{lemma}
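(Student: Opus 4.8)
The plan is to prove \eqref{eq:defserie} by testing both $\theta$ and $\sum_j\theta_j\otimes\phi_j$ against an arbitrary $f\in\Fc$, reducing the identity to the continuity of $\theta$ on $\Fc$ together with the convergence, in the topology of $\Fc$, of the eigenfunction expansion of $f$. Fix $\sigma>1$ and $M\in\Z$ with $\theta\in\mathcal{G}\mathcal{H}_{Mm,M\mu}'$, and, for $N\in\N^\ast$, set $\theta^{(N)}:=\sum_{j=1}^N\theta_j\otimes\phi_j$. Each summand is a well-defined element of $\Fc'$, since $\theta_j\in(\mathcal{G}^{\sigma}(\T^n))'$ satisfies \eqref{eq:thetaseriesest} while $\phi_j\in\dot{\mathscr{C}}^\infty(X)=\bigcap_{r,\rho\in\R}H^{r,\rho}(X)$, so that $\theta_j\otimes\phi_j\in(\mathcal{G}^{\sigma}\mathcal{H}_{r,\rho})'$ for all $r,\rho$; hence $\theta^{(N)}\in\Fc'$. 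For $f\in\Fc$ with Fourier coefficients $f_j(t)=(f(t),\phi_j)_{L^2(X)}$, one then unwinds the definition \eqref{partial_in_F'} of $\theta_j$, the duality convention \eqref{eq:uj} in the $X$-variable, and the orthonormality of $\{\phi_j\}$ to obtain
\[
\langle\theta^{(N)},f\rangle=\sum_{j=1}^N\langle\theta_j,f_j\rangle=\langle\theta,S_Nf\rangle,\qquad S_Nf:=\sum_{j=1}^N f_j\,\phi_j ,
\]
i.e. $\theta^{(N)}$ acts on $f$ by applying $\theta$ to the $N$-th partial sum of the eigenfunction expansion of $f$.

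The core step is to show that $S_Nf\to f$ in $\Fc$ for every $f\in\Fc$. Since $m,\mu>0$, the inclusions \eqref{sig,C,sobolev} show that $\Fc$ is cofinally described by the scale $\{\mathcal{G}\mathcal{H}_{M'm,M'\mu}\}_{M'\in\N}$, so it suffices to fix $M'\in\N$ and prove $S_Nf\to f$ in $\mathcal{G}\mathcal{H}_{M'm,M'\mu}$. Applying Theorem \ref{thm:reprF} to $f$ with the index $M'+1$ produces $\sigma'>1$ and $C'>0$ for which \eqref{eq:s} holds with $M$ replaced by $M'+1$. Put $\epsilon_N:=(\inf_{j>N}|\widetilde\lambda_j|)^{-1}$, which tends to $0$ since the spectrum of the elliptic operator $P$ is discrete with $|\widetilde\lambda_j|\to\infty$. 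Theorem \ref{thm:exp_Hmmu}, applied with $r=M'$, then gives, for all $\gamma\in\Z_+^n$ and $t\in\T^n$,
\begin{align*}
\|\partial_t^\gamma(f-S_Nf)(t)\|_{H^{M'm,M'\mu}(X)}^2
&\asymp\sum_{j>N}\widetilde\lambda_j^{2M'}\,|\partial_t^\gamma f_j(t)|^2 \\
&\le\epsilon_N^2\sum_{j\in\N^\ast}\widetilde\lambda_j^{2(M'+1)}\,|\partial_t^\gamma f_j(t)|^2
\le\epsilon_N^2\,(C')^{2(|\gamma|+1)}(\gamma!)^{2\sigma'} .
\end{align*}
Taking $\sup_{t\in\T^n}$, multiplying by $(C')^{-|\gamma|}(\gamma!)^{-\sigma'}$ and taking $\sup_{\gamma\in\Z_+^n}$ yields $\|f-S_Nf\|_{\sigma',C',M'm,M'\mu}\lesssim\epsilon_N\to0$; hence $S_Nf\to f$ in $\mathcal{G}^{\sigma'}\mathcal{H}^{C'}_{M'm,M'\mu}\hookrightarrow\mathcal{G}\mathcal{H}_{M'm,M'\mu}$, and, $M'$ being arbitrary, $S_Nf\to f$ in $\Fc$.

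Combining the two preceding steps with the continuity of $\theta$ on $\Fc$, for every $f\in\Fc$ one gets $\langle\theta^{(N)},f\rangle=\langle\theta,S_Nf\rangle\to\langle\theta,f\rangle$, that is $\theta^{(N)}\to\theta$ in the weak-$*$ topology of $\Fc'$, which is \eqref{eq:defserie}. To upgrade this to convergence in the topology of $\Fc'$, one checks that the family $\{\theta^{(N)}\}_N$ is equicontinuous on $\Fc$: by \eqref{eq:thetaseriesest} together with the decay $\|f_j\|_{\mathcal{G}^{\sigma,C}(\T^n)}\le C\,|\widetilde\lambda_j|^{-M'}$ from Corollary \ref{cor:seq}, applied with $M'$ large enough, $|\langle\theta^{(N)},f\rangle|\le\sum_j|\langle\theta_j,f_j\rangle|$ is dominated by a convergent series independent of $N$; on an equicontinuous family weak-$*$ convergence of a sequence implies convergence in $\Fc'$.

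I expect the main obstacle to lie in the core step: one must pick a single pair $(\sigma',C')$ simultaneously controlling the truncated $H^{M'm,M'\mu}(X)$-norm and its tail, which is exactly what condition \eqref{eq:s}, read with the shifted index $M'+1$, supplies. A secondary, purely bookkeeping point is to keep track of the conjugations occurring in \eqref{eq:uj} and \eqref{partial_in_F'} when verifying the identity $\langle\theta^{(N)},f\rangle=\langle\theta,S_Nf\rangle$; this is routine but must be done with care.
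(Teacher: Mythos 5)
Your argument is essentially the intended one: the paper omits the proof of Lemma \ref{lem:thetaseries}, referring to the analogous computation in \cite{ABC25}, and your scheme is exactly that computation --- rewrite $\langle\sum_{j\le N}\theta_j\otimes\phi_j,f\rangle$ as $\theta$ applied to a partial sum of the eigenfunction expansion of $f$, and show that these partial sums converge to $f$ in $\Fc$ by using condition \eqref{eq:s} with the index shifted by one to control the tail; that core step is correct and complete. Two remarks. First, with the paper's bilinear pairing convention the coefficients produced by unwinding \eqref{partial_in_F'} are $(f(t),\overline{\phi_j})_{L^2(X)}$, i.e.\ the coefficients of $f$ along the conjugate basis $\{\overline{\phi_j}\}$, not $f_j$; this is harmless, since $\{\overline{\phi_j}\}$ is again an orthonormal eigenbasis (of the conjugated operator) with the same eigenvalues, so the identical tail estimate applies, but it is precisely the bookkeeping you flagged and should be written out rather than asserted. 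Second, your closing paragraph does not actually prove equicontinuity: the domination of $\sum_j|\langle\theta_j,f_j\rangle|$ via \eqref{eq:thetaseriesest} and Corollary \ref{cor:seq} involves constants depending on $f$, hence gives only pointwise bounds, not a uniform estimate by a fixed continuous seminorm. This does not affect the lemma, because \eqref{eq:defserie} is meant in the sense of convergence of the pairings against every $f\in\Fc$ (cf.\ Theorem \ref{thm:cauchyseq}), which your first three steps already establish; either drop the ``upgrade'' or simply invoke Theorem \ref{thm:cauchyseq} to conclude that the limit of the partial sums exists in $\Fc'$ and coincides with $\theta$.
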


	\begin{thm}\label{thm:cauchyseq}
		Let $\{\tau_j\}_{j \in \N^\ast}\subset\Fc'(\T ^n\times X)$ be such that
		$\{\langle \tau_j,f \rangle \}_{j \in \N^\ast}$ is a Cauchy sequence in $\C$, for all 
		$f \in \Fc(\T ^n\times X)$. Then there exists $\tau \in \Fc'(\T ^n\times X)$ such that
		$\displaystyle\tau = \lim_{j \to \infty}  \tau_j$, that is, 
		\[
			\langle \tau, f \rangle = \lim_{j \to \infty}  \langle \tau_j, f \rangle, \quad f \in \Fc(\T ^n\times X).
		\]
	\end{thm}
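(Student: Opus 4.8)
The plan is to \emph{define} $\tau$ by the pointwise formula $\langle\tau,f\rangle:=\lim_{j\to\infty}\langle\tau_j,f\rangle$, $f\in\Fc(\T^n\times X)$: the limit exists by hypothesis since $\C$ is complete, and $\tau$ is obviously linear, so the whole content of the statement is that $\tau$ is \emph{continuous}, i.e.\ $\tau\in\Fc'(\T^n\times X)$, together with the fact that $\langle\tau,f\rangle=\lim_j\langle\tau_j,f\rangle$, which by construction is automatic once continuity is established. Recalling \eqref{eq:defF} and \eqref{F'=UH'}, $\Fc$ is the projective limit of the spaces $\mathcal{G}\mathcal{H}_{-N,-N}(\T^n\times X)$, $N\in\N$ (a countable cofinal subfamily of the index set $\mathbb R^2$), so $\Fc'=\bigcup_N\mathcal{G}\mathcal{H}_{-N,-N}'$ and it suffices to exhibit $N\in\N$ with $\tau\in\mathcal{G}\mathcal{H}_{-N,-N}'$.

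The engine is the Banach--Steinhaus theorem. For each $f\in\Fc$ the scalar sequence $\{\langle\tau_j,f\rangle\}_j$ is Cauchy, hence bounded, so $\{\tau_j\}_{j\in\N^\ast}$ is $\sigma(\Fc',\Fc)$--bounded; if $\Fc$ is barrelled this forces $\{\tau_j\}$ to be equicontinuous, i.e.\ there is a $0$--neighbourhood $U\subset\Fc$ with $\sup_j|\langle\tau_j,f\rangle|\le 1$ for all $f\in U$. Letting $j\to\infty$ gives $|\langle\tau,f\rangle|\le1$ on $U$, so $\tau$ is continuous on $\Fc$; and since a $0$--neighbourhood of the projective limit $\Fc$ is pulled back from one of the spaces $\mathcal{G}\mathcal{H}_{-N,-N}$ of the inverse system, $\tau$ factors through it, whence $\tau\in\mathcal{G}\mathcal{H}_{-N,-N}'\subset\Fc'$. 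The barrelledness of $\Fc$ is part of the functional--analytic framework of these spaces: each $\mathcal{G}\mathcal{H}_{r,\rho}=\varinjlim_\sigma\mathcal{G}^\sigma\mathcal{H}_{r,\rho}^{\sigma-1}$ is a countable inductive limit of Banach spaces, hence barrelled, and the countable projective limit $\Fc$ of these spaces has the required stability because the linking maps are compact, thanks to the Rellich--type embeddings $H^{-(N+1),-(N+1)}(X)\hookrightarrow H^{-N,-N}(X)$ together with the compact Gevrey--scale inclusions; this is the analogue, for a general asymptotically Euclidean $X$, of the structure established for $X=\R^d$ in \cite{ABC25}.

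I expect the genuine obstacle to be precisely this last point — the verification that $\Fc$ is barrelled (equivalently, that Banach--Steinhaus is available on $\Fc$) — since it is the only place where the delicate structure of a projective limit of $(LB)$--spaces intervenes; the rest of the argument is soft. A more hands--on route that sidesteps abstract barrelledness of $\Fc$, and is likely closer to the method of \cite{ABC25}, uses only the classical Banach--Steinhaus theorem on the compact torus. By Lemma~\ref{lem:thetaseries} write $\tau_j=\sum_{k\in\N^\ast}(\tau_j)_k\otimes\phi_k$; for fixed $k$ and $\psi\in\mathcal{G}^{\sigma}(\T^n)$ the sequence $\langle(\tau_j)_k,\psi\rangle=\langle\tau_j,\psi\otimes\overline{\phi_k}\rangle$ is Cauchy, hence, $\mathcal{G}^{\sigma}(\T^n)$ being barrelled, $(\tau_j)_k$ converges to some $\tau^{(k)}\in(\mathcal{G}^{\sigma}(\T^n))'$; one sets $\tau:=\sum_k\tau^{(k)}\otimes\phi_k$. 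It then remains to show that $\tau\in\Fc'$, by extracting from the boundedness of the action of $\{\tau_j\}$ on the bounded subsets of $\mathcal{G}\mathcal{H}_{Mm,M\mu}$ a bound $|\langle\tau^{(k)},\psi\rangle|\le B'|\widetilde{\lambda}_k|^{M}\|\psi\|_{\mathcal{G}^{\sigma,C}}$ of the type \eqref{eq:thetaseriesest} \emph{uniform} in $k$ with a single $M$, and that $\langle\tau,f\rangle=\lim_j\langle\tau_j,f\rangle$ for every $f\in\Fc$; in this alternative the delicate step is the interchange of $\lim_j$ and $\sum_k$, which one justifies by a dominated--convergence argument resting on the rapid decay in $k$ of the coefficients $f_k(t)$ guaranteed by Theorem~\ref{thm:reprF}.
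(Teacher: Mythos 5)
First, a point of reference: the paper itself does not prove Theorem \ref{thm:cauchyseq} --- it is listed among the results whose proofs are omitted and deferred to the Euclidean case in \cite{ABC25} --- so your proposal can only be measured against what such a proof must contain, and there the decisive issue is exactly the one you flag but never settle: the continuity of the pointwise limit functional, equivalently the equicontinuity of $\{\tau_j\}$, i.e.\ the existence of a \emph{single} $M$ and constants such that all $\tau_j$ satisfy an estimate of the type \eqref{eq:thetaseriesest} simultaneously. In your first route this is hidden in the assertion that $\Fc$ is barrelled ``because the linking maps are compact''. That is not a quotable fact: a countable projective limit of barrelled (LB-)spaces need not be barrelled, and compactness of the linking maps only places $\Fc$ in the PLB/PLS class, for which barrelledness (equivalently ultrabornologicity) is governed by the vanishing of the derived projective-limit functor $\mathrm{Proj}^1$ and has to be verified for the specific spectrum --- which is precisely the nontrivial content you would be assuming. (There is also a sign slip: with the paper's conventions $H^{r,\rho}(X)$ shrinks as $r,\rho$ increase, so the cofinal spectrum for \eqref{eq:defF} is $\mathcal{G}\mathcal{H}_{N,N}$, $N\to+\infty$, with $\Fc'=\bigcup_N\mathcal{G}\mathcal{H}_{N,N}'$ by \eqref{F'=UH'}, and the compact embedding goes $H^{N+1,N+1}(X)\hookrightarrow H^{N,N}(X)$; harmless, but it is the spectrum you would actually have to analyse.)

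Your second route has the same gap, now stated openly: defining $\tau^{(k)}$ as weak limits in $(\mathcal{G}^{\sigma}(\T^n))'$ is fine, since $\mathcal{G}^{\sigma}(\T^n)$ is an LB-space and hence barrelled, but the step ``extract a bound $|\langle\tau^{(k)},\psi\rangle|\le B'|\widetilde{\lambda}_k|^{M}\|\psi\|_{\mathcal{G}^{\sigma,C}}$ uniform in $k$ with a single $M$'' \emph{is} the theorem, and as set up it is circular: a priori each $\tau_j$ only lies in some $\mathcal{G}\mathcal{H}_{M_jm,M_j\mu}'$ with $M_j$ possibly growing in $j$, so ``the boundedness of the action of $\{\tau_j\}$ on the bounded subsets of $\mathcal{G}\mathcal{H}_{Mm,M\mu}$'' for one fixed $M$ is not available until the uniformity has been proved. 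What is missing in both routes is a concrete argument producing that single $M$ --- for instance a gliding-hump/contradiction construction: if no pair $(M,B)$ works uniformly in $j$ and $k$, use Corollary \ref{cor:suffhypo} (or Theorem \ref{thm:reprF}) to assemble $f=\sum_k f_k\otimes\phi_k\in\Fc$ with rapidly decaying coefficients on which $\sup_j|\langle\tau_j,f\rangle|=\infty$, contradicting the pointwise boundedness given by the Cauchy hypothesis; once that estimate is in hand, Theorem \ref{thm:seq} and the dominated-convergence interchange you describe do complete the argument. Until this uniformity step (or, alternatively, the barrelledness of $\Fc$) is actually established, the proposal records the correct strategy but does not prove the theorem.
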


	The subsequent Theorem \ref{thm:seq} provides a sufficient condition on the coefficients of an expansion with
	respect to the basis $\{\phi_j\}_{j \in \N^\ast}$ to converge to an element of $\Fc'$. Together with Lemma \ref{lem:thetaseries}
	shows the characterisation of $\Fc'$ in terms of eigenfunctions expansions associated with a classical,
	self-adjoint, positive scattering operator.
	
	\begin{thm}\label{thm:seq}
		Let $P \in \Psi_{\sccl}^{m,\mu}(X)$ be an elliptic, self-adjoint, positive, 
		classical scattering operator with order components $m,\mu>0$, and denote by $\{\phi_j\}_{j \in \N^\ast}$ a basis 
		of orthonormal eigenfunctions of $P$ with corresponding eigenvalues $\{\lambda_j\}_{j \in \N^\ast}$.
		Let $\{\vartheta_j\}_{j \in \N^\ast} \subset \bigcap_{\sigma>1}(\mathcal{G}^{\sigma}(\T^n))'$ 
		be a sequence such that there exist $M \in \Z$,  $B>0$, satisfying 
		\[
			|\langle  \vartheta_j ,  \psi \rangle| \leq B \|\psi\|_{\mathcal{G}^{\sigma,C}(\T^n)}
			|\widetilde{\lambda}_{j}|^{M}, \quad j \in\N^\ast, 
		\]
		for all $\sigma>1$, $C>0$, $\psi \in \mathcal{G}^{\sigma,C}(\T^n)$. Then
		\begin{equation}\label{fourier-inv}
			\vartheta = \sum_{j \in \N^\ast} \vartheta_j \otimes\phi_j\in \Fc'(\T ^n\times X).
		\end{equation}
		Moreover, 
		\[
			\langle  \vartheta_j , \psi \rangle  = \langle  \vartheta ,  \psi\otimes\overline{\phi_j} \rangle, \quad \psi \in  \mathcal{G}^{\sigma}(\T^n),j \in \N^\ast.
		\]
	\end{thm}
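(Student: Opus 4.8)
Our plan is to read the series in \eqref{fourier-inv} as the linear functional
$$
\Fc(\T^n\times X)\ni u\longmapsto \langle\vartheta,u\rangle:=\sum_{j\in\N^\ast}\langle\vartheta_j,u_j\rangle,
$$
where $u_j(t)=(u(t),\phi_j)_{L^2(X)}$ are the Fourier coefficients of $u$ introduced above, and to prove that this functional is well defined and continuous on $\Fc$ by exhibiting a \emph{single} pair $(r_0,\rho_0)$ for which an estimate $|\langle\vartheta,u\rangle|\le B'\,\|u\|_{\sigma,C,r_0,\rho_0}$ holds for every $\sigma>1$, $C>0$ and every $u\in\mathcal{G}^{\sigma}\mathcal{H}^{C}_{r_0,\rho_0}$. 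By the characterisation of $\mathcal{G}\mathcal{H}'_{r_0,\rho_0}$ recalled above and by \eqref{F'=UH'}, this at once gives $\vartheta\in\mathcal{G}\mathcal{H}'_{r_0,\rho_0}\subset\Fc'$. The identity $\langle\vartheta_j,\psi\rangle=\langle\vartheta,\psi\otimes\overline{\phi_j}\rangle$ will then follow by substituting $u=\psi\otimes\overline{\phi_j}$ and using orthonormality, exactly as in \cite{ABC25}.

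The choice of $(r_0,\rho_0)$ is the heart of the argument. Recall that $\widetilde\lambda_j\ge c>0$ for all $j$, and that, by Remark \ref{rem:polyboundlambda}, $\lambda_j\ge K' j^{\varrho'}$ for $j$ large, with $K',\varrho'>0$; hence we may fix $N\in\N$, depending only on $M$, so large that
$$
\sum_{j\in\N^\ast}\widetilde\lambda_j^{\,M-N}<\infty,
$$
and set $r_0=Nm$, $\rho_0=N\mu$. Now let $u\in\mathcal{G}^{\sigma}\mathcal{H}^{C}_{r_0,\rho_0}$. For every $\gamma\in\Z_+^n$ and $t\in\T^n$ we have $\partial_t^\gamma u_j(t)=(\partial_t^\gamma u(t),\phi_j)_{L^2(X)}$ with $\partial_t^\gamma u(t)\in H^{Nm,N\mu}(X)$, and applying Theorem \ref{thm:exp_Hmmu} to $\partial_t^\gamma u(t)$ (with $r=N$) and retaining only the $j$-th term in the series yields a constant $c>0$, independent of $j,\gamma,t$, such that
$$
|\partial_t^\gamma u_j(t)|\le c\,\widetilde\lambda_j^{\,-N}\,\|\partial_t^\gamma u(t)\|_{H^{Nm,N\mu}(X)}.
$$
Taking $\sup_{t\in\T^n}$ and then $\sup_{\gamma}C^{-|\gamma|}(\gamma!)^{-\sigma}$ gives $\|u_j\|_{\mathcal{G}^{\sigma,C}(\T^n)}\le c\,\widetilde\lambda_j^{\,-N}\,\|u\|_{\sigma,C,r_0,\rho_0}$, so in particular $u_j\in\mathcal{G}^{\sigma,C}(\T^n)$.

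It remains to combine this with the hypothesis on $\vartheta_j$, which we may apply with precisely the parameters $\sigma,C$ at hand: $|\langle\vartheta_j,u_j\rangle|\le B\,\widetilde\lambda_j^{\,M}\,\|u_j\|_{\mathcal{G}^{\sigma,C}(\T^n)}\le Bc\,\widetilde\lambda_j^{\,M-N}\,\|u\|_{\sigma,C,r_0,\rho_0}$. Summing over $j$,
$$
\sum_{j\in\N^\ast}|\langle\vartheta_j,u_j\rangle|\le B'\,\|u\|_{\sigma,C,r_0,\rho_0},\qquad B':=Bc\sum_{j\in\N^\ast}\widetilde\lambda_j^{\,M-N}<\infty,
$$
with $B'$ independent of $\sigma$ and $C$. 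This proves simultaneously the absolute convergence of the defining series for each $u$ — so that $\langle\vartheta,\cdot\rangle$ is a well-defined linear functional on $\mathcal{G}\mathcal{H}_{r_0,\rho_0}$, a fortiori on $\Fc$ — and the required continuity estimate, whence $\vartheta\in\mathcal{G}\mathcal{H}'_{r_0,\rho_0}\subset\Fc'$. The partial sums $\sum_{j\le J}\vartheta_j\otimes\phi_j$ converge to $\vartheta$ in $\Fc'$, either by the same tail estimate or by applying Theorem \ref{thm:cauchyseq} to the sequence $\{\langle\sum_{j\le J}\vartheta_j\otimes\phi_j,\cdot\rangle\}_J$. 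For the last identity, one checks that $\psi\otimes\overline{\phi_j}\in\mathcal{G}^{\sigma}\mathcal{H}_{r_0,\rho_0}$ for every $\psi\in\mathcal{G}^{\sigma}(\T^n)$ (since $\overline{\phi_j}\in\dot{\mathscr{C}}^\infty(X)\subset H^{r_0,\rho_0}(X)$ and $\|\partial_t^\gamma(\psi\,\overline{\phi_j})\|_{H^{r_0,\rho_0}}=\|\phi_j\|_{H^{r_0,\rho_0}}\,|\partial_t^\gamma\psi|$), computes its $k$-th Fourier coefficient from the orthonormality of $\{\phi_k\}$ so that only the term $k=j$ survives, and substitutes into the series, exactly as for the corresponding statement in \cite{ABC25}.

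The main obstacle is precisely the tension exploited above: a test function $u\in\mathcal{G}\mathcal{H}_{r,\rho}$ forces its Fourier coefficients $u_j$ to decay like $\widetilde\lambda_j^{-N}$ only when $(r,\rho)$ sits at the ``level'' $N$, whereas the functionals $\langle\vartheta_j,\cdot\rangle$ are controlled only by the growing weight $\widetilde\lambda_j^{M}$. The two are reconciled solely because we are free to push $(r_0,\rho_0)$ to an arbitrarily high level $N$, making $\widetilde\lambda_j^{M-N}$ summable, and because the bound on the $\vartheta_j$ is uniform over the Gevrey parameters $\sigma,C$, which is what allows us to feed in the parameters dictated by the given $u$. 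Once this is in place, everything else — linearity, the reduction to one pair $(r_0,\rho_0)$, convergence of the partial sums — is routine.
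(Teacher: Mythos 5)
Your argument is correct and is essentially the intended one: the paper omits the proof of Theorem \ref{thm:seq}, deferring to the analogous result in \cite{ABC25}, and your strategy --- fix $N$ with $\sum_{j}\widetilde{\lambda}_j^{\,M-N}<\infty$ (possible by Remark \ref{rem:polyboundlambda}), use the eigenfunction characterisation of $H^{Nm,N\mu}(X)$ from Theorem \ref{thm:exp_Hmmu} to get $\|u_j\|_{\mathcal{G}^{\sigma,C}(\T^n)}\le c\,\widetilde{\lambda}_j^{\,-N}\|u\|_{\sigma,C,Nm,N\mu}$, and exploit that the hypothesis on the $\vartheta_j$ is uniform in $\sigma,C$ so it can be applied with the parameters dictated by $u$ --- is exactly the expected route to $\vartheta\in\mathcal{G}\mathcal{H}_{Nm,N\mu}'\subset\Fc'$ via \eqref{F'=UH'}, with the convergence of partial sums and the final identity handled as you indicate. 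The only bookkeeping point: to make the concluding orthogonality computation produce $\delta_{jk}$ (consistently with \eqref{partial_in_F'} and Lemma \ref{lem:thetaseries}), the coefficient of $u$ paired with $\vartheta_j$ should be $t\mapsto\langle u(t),\phi_j\rangle=(u(t),\overline{\phi_j})_{L^2(X)}$ rather than $(u(t),\phi_j)_{L^2(X)}$, since $(\overline{\phi_j},\phi_k)_{L^2(X)}$ need not equal $\delta_{jk}$ for complex eigenfunctions; this changes none of your estimates.
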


\section{Global hypoellipticity for time-periodic operators on $\T \times X$}\label{sec:gh}
\setcounter{equation}{0}

We first state results about the global hypoellipticity of operators of the type
\begin{equation}\label{eq:L_constant_coeff}
	L = D_t + \omega P, \ t \in \T,
\end{equation}
defined on (suitable) distributions on $\T\times X$. Here $\omega = \alpha + i\beta \in \C$ and 
$P\in\Psi_\mathrm{sc}^{m,\mu}(X)$ is a classical, positive, self-adjoint, elliptic scattering operator on $X$ with order components $m, \mu>0$.
We will omit the proofs, since they are consequences of the computations performed in \cite{ABC25}, Sections 4 and 5. 

The strategy consists in studying the solutions $u\in\Fc' (\T \times X)$ of the equation $Lu =f$ employing a Fourier decomposition of both $u\in\Fc'(\T \times X)$ and $f\in\Fc(\T \times X)$ in terms of eingenfunctions generated by the operator $P$, as illustrated in the previous Section \ref{sec:spaces}. 
Therefore, writing, for $t\in\T$,
\begin{equation*}
	u(t) = \sum_{j \in \N^\ast} u_j(t) \phi_j =  \sum_{j \in \N^\ast} u_j(t) \otimes \phi_j
	\ \textrm{ and } \
	f(t) = \sum_{j \in \N^\ast} f_j(t) \phi_j = \sum_{j \in \N^\ast} f_j(t) \otimes \phi_j,
\end{equation*}
we obtain that the equation $Lu=f$ is equivalent to the (infinite) system of ODEs
\begin{equation}\label{diffe-equations}
	D_t u_j(t) +  \omega\lambda_j  u_j(t) = f_j(t), \ t \in \T, \ j \in \N^\ast,
\end{equation}
whose solutions are given by
\begin{equation}\label{fator_integrante}
	u_j(t) = u_{j0}\exp\left( -i \lambda_j\omega t \right) +  i\int_{0}^{t}\exp\left(i\lambda_j \omega (s-t) \right) f_j(s)ds,
\end{equation}
for some $u_{j0} \in \C$, $j \in \N^\ast$. 

We may assume that the coefficients  $f_j$ belong to a Gevrey class $\mathscr{G}^{\sigma}=\mathscr{G}^{\sigma}(\T)$ for all $j \in \N^\ast$. Then, 
by the properties of equation \eqref{diffe-equations}, also $u_j$ is in $\mathscr{G}^{\sigma}$ for all $j \in \N^\ast$. 

We recall that the set 
\begin{equation}\label{eq:defZ}
 \mathcal Z = \{j \in \N^\ast; \ \omega \lambda_{j} \in \Z\}
\end{equation}
is finite if $\beta \neq 0$.
Therefore, if $j \notin \mathcal Z$, $u_{j0}$ is uniquely defined and \eqref{fator_integrante} can be written in either of the two equivalent forms
\begin{align}
	\label{Solu-1-Constant}
	u_j(t) &= \frac{i}{1 - e^{-  2 \pi i\lambda_j\omega}} \int_{0}^{2\pi}\exp\left(-i\lambda_j \omega s\right) f_j(t-s)ds
\intertext{or}
	\label{Solu-2-Constant}
	u_j(t) &= \frac{i}{e^{ 2 \pi i \lambda_j \omega} - 1} \int_{0}^{2\pi}\exp\left(i\lambda_j \omega s \right) f_j(t+s)ds. 
\end{align}
\begin{defn}\label{def:gh}
	We say that the operator $L$ defined in \eqref{eq:L_constant_coeff} is globally hypoelliptic on $\T\times X$ if
	\begin{equation}\label{eq:defgh}
		u \in \Fc'(\T\times X) \ \textrm{ and } \ Lu \in \Fc(\T\times X)
		\Rightarrow
		u \in \Fc(\T\times X).
	\end{equation}
\end{defn}
The results are closely related to a so-called \emph{Diophantine condition}, expressed in the next Definition \ref{def:condA}.
\begin{defn}[Condition \condA]\label{def:condA}
We say that  a real number $\alpha$ satisfies Condition \condA\ if there are positive constants $\epsilon$ and  $C$  such that 
\[
	|\tau -\alpha \lambda_{j}| \geq C j^{-\epsilon}, 
\]
for all $(j,\tau) \in \N^\ast \times \Z$.
\end{defn}
In the literature, Condition \condA\ is also known as the property for the real number $\alpha$ of \emph{not being Liouville} 
with respect to the sequence $\{\lambda_j\}$. 
	The characterisation of the global hypoellipticity on $\T\times X$ of the operator $L$ in \eqref{eq:L_constant_coeff} 
	is expressed in the next Theorem \ref{thm:MainTheormHypo}.
	\begin{thm}\label{thm:MainTheormHypo}
		The operator $L$ defined in \eqref{eq:L_constant_coeff} is globally hypoelliptic on $\T\times X$
		if and only if either $\beta \neq 0$ or $\beta=0$ and $\alpha$ satisfies Condition \condA.
	\end{thm}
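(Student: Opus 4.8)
The plan is to work entirely with the Fourier decomposition along the eigenfunction basis $\{\phi_j\}$ of $P$, reducing the global hypoellipticity of $L=D_t+\omega P$ to quantitative estimates on the solution formulas \eqref{Solu-1-Constant}--\eqref{Solu-2-Constant}. By Theorems \ref{thm:reprF}, \ref{thm:seq} and Lemma \ref{lem:thetaseries}, together with Corollaries \ref{cor:estpow} and \ref{cor:seq}, membership of $f$ in $\Fc$ is equivalent to the rapid decay $\|f_j\|_{\mathcal G^{\sigma,C}(\T)}\le C_M\,\widetilde\lambda_j^{-M}$ for all $M$, while membership of $u$ in $\Fc'$ is equivalent to a polynomial bound $|\langle u_j,\psi\rangle|\le B\,\widetilde\lambda_j^{N}\|\psi\|_{\mathcal G^{\sigma,C}}$ for some single $N$; so the whole statement becomes: if $f\in\Fc$ and $u\in\Fc'$ solve $D_tu_j+\omega\lambda_ju_j=f_j$, then $u\in\Fc$. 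Since $P$ is positive and elliptic of positive order, Remark \ref{rem:polyboundlambda} gives $K'j^{\varrho'}\le\lambda_j\le Kj^{\varrho}$, so "$\widetilde\lambda_j$-power" decay and "$j$-power" decay are interchangeable throughout.

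\emph{Sufficiency.} Assume first $\beta\neq0$. Then $\mathcal Z$ is finite, so for $j\notin\mathcal Z$ the coefficient $u_j$ is uniquely determined by \eqref{Solu-1-Constant}, and the Fourier-integral estimate of \cite[Sections 4--5]{ABC25} applies: one estimates $\partial_t^\gamma u_j$ by differentiating under the integral sign, bounding $|1-e^{-2\pi i\lambda_j\omega}|^{-1}$ — which, because $|\mathrm{Im}(\lambda_j\omega)|=|\beta|\lambda_j\to\infty$, is bounded by a constant uniformly in $j$ — and using $\|f_j\|_{\mathcal G^{\sigma,C}}\le C_M\widetilde\lambda_j^{-M}$; this yields $\|u_j\|_{\mathcal G^{\sigma,C'}}\le C_M'\widetilde\lambda_j^{-M}$ for every $M$, hence $u\in\Fc$ by Corollary \ref{cor:suffhypo}. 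For the finitely many $j\in\mathcal Z$ one argues directly: $u\in\Fc'$ forces each such $u_j\in(\mathcal G^\sigma(\T))'$ to be, in fact, a Gevrey \emph{function} because $D_tu_j+\omega\lambda_ju_j=f_j\in\mathcal G^\sigma(\T)$ is an ODE on the torus with constant coefficient, and the corresponding homogeneous solution $e^{-i\lambda_j\omega t}$ is smooth periodic; finitely many Gevrey terms do not affect membership in $\Fc$. If instead $\beta=0$ and $\alpha$ satisfies Condition \condA, then $\mathcal Z$ may be infinite, but Condition \condA\ gives $|1-e^{-2\pi i\alpha\lambda_j}|=|2\sin(\pi\alpha\lambda_j)|\gtrsim\mathrm{dist}(\alpha\lambda_j,\Z)\gtrsim j^{-\epsilon}$ whenever $\alpha\lambda_j\notin\Z$; inserting this into \eqref{Solu-1-Constant} costs only a factor $j^{\epsilon}$, which is absorbed by the rapid decay of $\|f_j\|_{\mathcal G^{\sigma,C}}$, again producing rapidly decaying $u_j$ and $u\in\Fc$ via Corollary \ref{cor:suffhypo}. (When $\alpha\lambda_j\in\Z$ exactly, compatibility $\int_0^{2\pi}e^{-i\alpha\lambda_js}f_j(t-s)\,ds=$ const is automatic for $u\in\Fc'$, and $u_j$ differs from a particular Gevrey solution by a multiple of $e^{-i\alpha\lambda_jt}$, whose contribution is controlled since $u\in\Fc'$ bounds $|u_{j0}|$ polynomially.)

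\emph{Necessity.} Conversely, suppose $\beta=0$ and $\alpha$ fails Condition \condA. Then there is a subsequence $j_k$ and integers $\tau_k$ with $0<|\tau_k-\alpha\lambda_{j_k}|<j_k^{-k}$. Mimicking the classical Hounie-type construction, set $u_{j_k}(t)=e^{i\tau_kt}$ (smooth, of norm $\|u_{j_k}\|_{\mathcal G^{\sigma,C}}$ growing only polynomially in $\tau_k$, hence in $j_k$) and $u_j=0$ otherwise; then $f_{j_k}:=D_tu_{j_k}+\alpha\lambda_{j_k}u_{j_k}=(\tau_k-\alpha\lambda_{j_k})e^{i\tau_kt}$ has $\|f_{j_k}\|_{\mathcal G^{\sigma,C}}\le j_k^{-k}\cdot(\text{poly})$, so $f:=\sum f_j\phi_j\in\Fc$ by Corollary \ref{cor:suffhypo}. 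But $u:=\sum u_j\phi_j$ lies in $\Fc'$ (polynomial bound, Theorem \ref{thm:seq}) while failing to lie in $\Fc$, since $\|u_{j_k}\|_{\mathcal G^{\sigma,C}}\ge1$ does not decay in $j_k$, contradicting \eqref{eq:s}. Hence $L$ is not globally hypoelliptic, which completes the equivalence.

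\emph{Main obstacle.} The essential technical point is the uniform-in-$j$ control of the Gevrey norms of the solution operators given by \eqref{Solu-1-Constant}--\eqref{Solu-2-Constant} — i.e.\ showing that the small-divisor factor $|1-e^{-2\pi i\lambda_j\omega}|^{-1}$ grows at most polynomially in $j$ under the stated hypotheses, and that differentiating the convolution integral $\gamma$ times preserves the $C^{|\gamma|+1}(\gamma!)^\sigma$ structure with constants independent of $j$ after multiplication by arbitrary powers $\widetilde\lambda_j^{M}$. All of this is carried out in \cite[Sections 4--5]{ABC25}, to which we refer for the detailed estimates; the passage from $\R^d$ to the asymptotically Euclidean $X$ changes nothing, since everything is phrased purely in terms of the spectral data $\{\lambda_j\}$ and the norm equivalences of Section \ref{sec:spaces}.
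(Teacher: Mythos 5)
Your sufficiency argument follows the route the paper intends (reduction to the ODEs, the explicit formulas \eqref{Solu-1-Constant}--\eqref{Solu-2-Constant}, the small-divisor bound from Condition \condA\ via Lemma \ref{lem:1-exp}, and rapid decay of the $u_j$), and apart from minor points it works: for $\beta>0$ you must either use \eqref{Solu-2-Constant} or keep the factor $e^{\lambda_j\beta s}$ inside the integral and play it against the prefactor (you cannot simply bound the integrand by $\sup|\partial^\gamma f_j|$), and the correct way to conclude $u\in\Fc$ is via condition \eqref{eq:sclsa} of Corollary \ref{cor:estpow} (which allows $\sigma_M,C_M$ to depend on $M$), not Corollary \ref{cor:suffhypo}, whose hypothesis requires a single fixed $(\sigma,C)$ that the forward implication of Corollary \ref{cor:seq} does not supply.

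The genuine gap is in the necessity direction. Your key quantitative claim — that $\|e^{i\tau_k t}\|_{\mathcal{G}^{\sigma,C}(\T)}$ grows ``only polynomially in $\tau_k$'' — is false: $\sup_{\gamma}C^{-\gamma}(\gamma!)^{-\sigma}|\tau_k|^{\gamma}\asymp\exp\bigl(\sigma(|\tau_k|/C)^{1/\sigma}\bigr)$, which is super-polynomial in $|\tau_k|\asymp\lambda_{j_k}\asymp j_k^{\varrho}$. Consequently, to have $f=\sum f_{j_k}\phi_{j_k}\in\Fc$ you must verify \eqref{eq:sclsa} uniformly in $\gamma$, and the binding case $\gamma\approx|\tau_k|^{1/\sigma_M}$ forces $|\tau_k-\alpha\lambda_{j_k}|\lesssim \lambda_{j_k}^{-M}\exp\bigl(-c\,\lambda_{j_k}^{1/\sigma_M}\bigr)$, i.e.\ a \emph{sub-exponential} smallness of the divisors, whereas the failure of the polynomial Condition \condA\ only yields $|\tau_k-\alpha\lambda_{j_k}|<j_k^{-k}$, with no control whatsoever on how fast $j_k$ grows with $k$. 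So the single-mode construction as you wrote it does not produce $f\in\Fc$, and the ``necessity'' half is not established. (Two further loose ends in the same step: the extraction of a subsequence with $j_k\to\infty$ and $0<|\tau_k-\alpha\lambda_{j_k}|$ is not automatic, since \condA\ can also fail through resonances $\alpha\lambda_j\in\Z$, which must be treated separately — infinitely many resonances are handled by $u=\sum e^{-i\alpha\lambda_j t}\phi_j$ with $Lu=0$, finitely many do not obstruct hypoellipticity at all; and there is a sign slip, $Lu_{j_k}=(\tau_k+\alpha\lambda_{j_k})e^{i\tau_kt}$.) Note that the paper itself gives no proof here but refers to \cite{ABC25}, where the ambient space is of Schwartz/smooth type in $t$, so the constants in the $\gamma$-estimates may depend on $\gamma$ and the polynomial smallness suffices; in the present Gevrey-in-$t$ space $\Fc$ the uniform $C^{|\gamma|+1}(\gamma!)^{\sigma}$ structure changes the balance, and the transfer of the necessity argument is exactly the point that needs a new idea (or a reformulated Diophantine condition), not a routine repetition of the $\R^d$ computation.
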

	
	\begin{rem}\label{rem:solv}
		The solvability results for the operator $L$ in \eqref{eq:L_constant_coeff} on $\T\times\R^d$ proved in \cite{ABC25}
		extend to the $\T\times X$ setting as well.
	\end{rem}
	
%

Let us now discuss the global hypoellipticity on $\T\times X$ of operators of the type 
\begin{equation}\label{L_timedep_coeff}
	\mathcal L = D_t + c(t) P, \ t \in \T,
\end{equation}
where $c(t) = a(t) + ib(t)$ is a complex-valued function belonging to the Gevrey class $\mathcal{G}^{s}(\T)$, $s >1$, 
and $P \in \Psi_{\sccl}^{m,\mu}(X)$ is a positive, self-adjoint, elliptic, classical scattering operator with order components $m, \mu>0$. 
We say that the operator $\caL$ is globally hypoellitptic on $\T\times X$ if \eqref{eq:defgh} holds true with $\caL$ in place of $L$.

Arguing by Fourier decomposition, as discussed above, assuming that
\begin{align}
	\label{eq:uexp}
	u(t) = \sum_{j \in \N^\ast} u_j(t) \phi_j =  \sum_{j \in \N^\ast} u_j(t) \otimes \phi_j
	\ \intertext{ and } \
	\label{eq:fexp}	
	f(t) = \sum_{j \in \N^\ast} f_j(t) \phi_j = \sum_{j \in \N^\ast} f_j(t) \otimes \phi_j,
\end{align}
the equation $\mathcal Lu=f$ is equivalent to the infinite system of ODEs
\begin{equation}\label{diffe-equations-t}
	D_t u_j(t) +  c(t)\lambda_j  u_j(t) = f_j(t), \ t \in \T, \ j \in \N^\ast.
\end{equation}
Notice that, if we denote by $c_0$ the average on $\T$ of the function $c(t)$, that is, 
$$
c_0 = (2 \pi)^{-1} \int_{0}^{2 \pi} c(t) dt \doteq a_0 + ib_0,
$$
we observe that \eqref{diffe-equations-t} ensures the following result.
\begin{prop} \label{solutions-t}
	Let $u$ and $f = \caL u$ be as in \eqref{eq:uexp} and \eqref{eq:fexp}, respectively.
	Then, for each $j\in\N^\ast$ such that $\lambda_jc_0 \notin \Z$,  \eqref{diffe-equations-t} has a unique solution, 
	which can be written in the following, equivalent two ways:
	\begin{align}\label{Solu-1-t}
		u_j(t) = \frac{i}{1 - e^{-  2 \pi i\lambda_j c_0}} \int_{0}^{2\pi}\exp\left(-i\lambda_j\int_{t-\zeta}^{t}c(r) \, dr\right) f_j(t-\zeta)d\zeta, 
	\end{align}
	or
	\begin{align}\label{Solu-2-t}
		u_j(t) = \frac{i}{e^{ 2 \pi i \lambda_j c_0} - 1} \int_{0}^{2\pi}\exp\left(i\lambda_j\int_{t}^{t+\zeta}c(r) \, dr\right) f_j(t+\zeta)d\zeta. 
	\end{align}
\end{prop}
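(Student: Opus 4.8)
The plan is to solve the linear scalar ODE \eqref{diffe-equations-t} by the integrating factor method, exactly as in the constant-coefficient case, and then use the periodicity of $c(t)$ together with the hypothesis $\lambda_j c_0\notin\Z$ to pin down the unique periodic solution. First I would introduce the antiderivative of $c$, writing
\begin{equation*}
C(t) = \int_0^t c(r)\,dr, \qquad C(t+2\pi) = C(t) + 2\pi c_0,
\end{equation*}
so that $e^{i\lambda_j C(t)}$ is an integrating factor for \eqref{diffe-equations-t}: multiplying $D_t u_j + c(t)\lambda_j u_j = f_j$ by $e^{i\lambda_j C(t)}$ and recalling $D_t = -i\partial_t$ turns the left-hand side into $-i\,\partial_t\bigl(e^{i\lambda_j C(t)} u_j(t)\bigr)$. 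Integrating on $[t_0,t]$ then gives
\begin{equation*}
e^{i\lambda_j C(t)}u_j(t) = e^{i\lambda_j C(t_0)}u_j(t_0) + i\int_{t_0}^{t} e^{i\lambda_j C(s)} f_j(s)\,ds,
\end{equation*}
which is the analogue of \eqref{fator_integrante}.

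Next I would impose $2\pi$-periodicity. Evaluating the above identity with $t_0 = t-2\pi$ and using $u_j(t-2\pi)=u_j(t)$, $f_j$ $2\pi$-periodic, and $C(t)-C(t-2\pi)=2\pi c_0$, one obtains
\begin{equation*}
\bigl(1 - e^{-2\pi i\lambda_j c_0}\bigr)\,e^{i\lambda_j C(t)} u_j(t) = i\int_{t-2\pi}^{t} e^{i\lambda_j C(s)} f_j(s)\,ds.
\end{equation*}
Since $\lambda_j c_0\notin\Z$, the factor $1 - e^{-2\pi i\lambda_j c_0}$ is nonzero, so $u_j(t)$ is uniquely determined; this also shows uniqueness of the periodic solution. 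Dividing through, substituting $s = t-\zeta$ so that $C(s) - C(t) = -\int_{t-\zeta}^{t} c(r)\,dr$, and changing the integration interval to $[0,2\pi]$ by periodicity of the integrand, yields precisely \eqref{Solu-1-t}. Formula \eqref{Solu-2-t} follows by the symmetric computation, integrating instead on $[t,t+2\pi]$ (equivalently, dividing by $e^{2\pi i\lambda_j c_0}-1$ and substituting $s = t+\zeta$), and the two expressions agree because their difference is $i/(\,\ldots\,)\cdot\oint e^{i\lambda_j C(s)}f_j(s)\,ds$ computed over a full period, reconciled via the relation $(1-e^{-2\pi i\lambda_j c_0})^{-1} + (e^{2\pi i\lambda_j c_0}-1)^{-1}\cdot e^{-2\pi i\lambda_j c_0}$-type bookkeeping.

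The computation is essentially routine; the only genuine point requiring care is the periodicity bookkeeping in the exponential factor, namely verifying that $e^{i\lambda_j C(t)}$ is well defined as a (non-periodic) smooth function while the combination appearing in \eqref{Solu-1-t}, $\exp(-i\lambda_j\int_{t-\zeta}^t c(r)\,dr) f_j(t-\zeta)$, is genuinely $2\pi$-periodic in $t$ for fixed $\zeta$, so that $u_j$ as defined is periodic. This is immediate from $\int_{t-\zeta+2\pi}^{t+2\pi} c(r)\,dr = \int_{t-\zeta}^{t} c(r)\,dr$ and $2\pi$-periodicity of $f_j$, but it is worth stating explicitly. One should also note that $u_j\in\mathcal{G}^\sigma(\T)$ whenever $f_j\in\mathcal{G}^\sigma(\T)$ and $c\in\mathcal{G}^s(\T)$ with $\sigma\ge s$: this follows from the explicit formula \eqref{Solu-1-t} together with standard closure properties of Gevrey classes under composition with the exponential, products, and integration, which is all that is needed for the subsequent hypoellipticity analysis. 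I expect no substantial obstacle beyond keeping track of signs coming from the convention $D_t = -i\partial_t$.
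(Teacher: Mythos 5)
Your proof is correct and follows the same route the paper (and its reference \cite{ABC25}) has in mind: the integrating-factor representation analogous to \eqref{fator_integrante}, followed by imposing $2\pi$-periodicity and using $\lambda_j c_0\notin\Z$ to invert the factor $1-e^{-2\pi i\lambda_j c_0}$, with the substitutions $s=t\mp\zeta$ giving \eqref{Solu-1-t} and \eqref{Solu-2-t}. The only cosmetic slip is the phrase about shifting the integration interval ``by periodicity'' — after the change of variables the interval is already $[0,2\pi]$ — which does not affect the argument.
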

In the next two lemmas we recall, respectively, a standard formula and a result which is often useful in this kind of analysis.
\begin{lemma}\label{lem:expA}
	Let $p, q$ be positive numbers and $\tau \in \Z_+$. For every  $\mu>0$ there exist $C=C(\mu, p/q)>0$ such that, for all $A \in [0,+\infty)$,
	\begin{equation*}
		A^{\tau p} \exp\left(-\mu A^{q}\right) \leq   C^\tau (\tau!)^{p/q}.
	\end{equation*}
\end{lemma}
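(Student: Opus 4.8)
\textbf{Plan for the proof of Lemma \ref{lem:expA}.}
The plan is to reduce the statement to a one-variable optimization, by treating $A$ as a continuous parameter in $[0,+\infty)$ and maximizing the left-hand side. First I would observe that the inequality is trivial for $A=0$ (the left-hand side vanishes unless $\tau=0$, in which case both sides equal $1$), so we may assume $A>0$ and, setting $t=A^q>0$, rewrite the left-hand side as $t^{\tau p/q}e^{-\mu t}$. Thus it suffices to bound $g(t)=t^{\alpha\tau}e^{-\mu t}$ with $\alpha=p/q>0$, uniformly in $t>0$, by $C^\tau(\tau!)^\alpha$ for a constant $C$ depending only on $\mu$ and $\alpha$.

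The key step is the elementary calculus fact that $g(t)=t^{\alpha\tau}e^{-\mu t}$ attains its maximum on $(0,+\infty)$ at $t_\ast=\alpha\tau/\mu$, with maximal value
\[
	g(t_\ast)=\left(\frac{\alpha\tau}{\mu}\right)^{\alpha\tau}e^{-\alpha\tau}
	=\left(\frac{\alpha}{\mu e}\right)^{\alpha\tau}\tau^{\alpha\tau}.
\]
(For $\tau=0$ the maximum is $1$, consistent with the formula read as $1$.) It then remains to compare $\tau^{\alpha\tau}$ with $(\tau!)^\alpha$: by the standard lower bound $\tau!\geq \tau^\tau e^{-\tau}$ (equivalently $\tau^\tau\leq e^\tau\,\tau!$), one gets $\tau^{\alpha\tau}\leq e^{\alpha\tau}(\tau!)^\alpha$. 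Combining,
\[
	g(t_\ast)\leq\left(\frac{\alpha}{\mu e}\right)^{\alpha\tau}e^{\alpha\tau}(\tau!)^\alpha
	=\left(\frac{\alpha}{\mu}\right)^{\alpha\tau}(\tau!)^{\alpha}
	=\left(\left(\frac{\alpha}{\mu}\right)^{\alpha}\right)^{\tau}(\tau!)^{\alpha},
\]
so the claim holds with $C=C(\mu,p/q)=(\alpha/\mu)^{\alpha}=(p/q)^{p/q}\mu^{-p/q}$ (and any larger constant, in particular one $\geq 1$, works as well if desired).

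There is essentially no serious obstacle here; the only point requiring a little care is the bookkeeping of the degenerate case $\tau=0$ and the reconciliation of the exponents, together with recalling the correct elementary inequality $\tau!\geq(\tau/e)^\tau$ (which follows, e.g., from $e^\tau=\sum_k \tau^k/k!\geq \tau^\tau/\tau!$). One could alternatively avoid the explicit maximizer by writing, for any $\mu>0$,
\[
	A^{\tau p}e^{-\mu A^q}=\left(A^{\tau q}e^{-\mu A^q}\right)^{p/q},
\]
and bounding $A^{\tau q}e^{-\mu A^q}=t^\tau e^{-\mu t}$ via $t^\tau/\tau!\leq e^{\mu t}\mu^{-\tau}\tau!\cdot(\text{nothing})$—more cleanly, from $e^{\mu t}\geq (\mu t)^\tau/\tau!$ one reads off $t^\tau e^{-\mu t}\leq \mu^{-\tau}\tau!$, and raising to the power $p/q$ gives the result with $C=\mu^{-p/q}$ after absorbing $(\tau!)^{p/q}$; this second route sidesteps Stirling entirely and is the one I would actually write down.
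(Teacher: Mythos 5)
Your main argument (substitute $t=A^q$, maximize $t^{\alpha\tau}e^{-\mu t}$ at $t_\ast=\alpha\tau/\mu$ with $\alpha=p/q$, then use $\tau!\ge(\tau/e)^\tau$) is correct and is exactly what the paper means by its one-line proof (``elementary estimates and the behaviour of the function $f(A)=A^{\tau p}\exp(-\mu A^q)$''), so on that route there is nothing to add. However, the ``second route'' that you say you would actually write down contains an algebraic slip: the identity $A^{\tau p}e^{-\mu A^q}=\left(A^{\tau q}e^{-\mu A^q}\right)^{p/q}$ is false unless $p=q$, since raising to the power $p/q$ turns $e^{-\mu A^q}$ into $e^{-\mu (p/q)A^q}$. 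The fix is immediate: write instead $A^{\tau p}e^{-\mu A^q}=\bigl(A^{\tau q}e^{-(\mu q/p)A^q}\bigr)^{p/q}$, apply $e^{\nu t}\ge(\nu t)^\tau/\tau!$ with $t=A^q$ and $\nu=\mu q/p$ to get $t^{\tau}e^{-\nu t}\le \nu^{-\tau}\tau!$, and raise to the power $p/q$; this gives the claim with $C=\bigl(p/(\mu q)\bigr)^{p/q}$, which still depends only on $\mu$ and $p/q$, so the Stirling-free variant goes through once the exponent bookkeeping is corrected.
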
		
\begin{proof}
	The claim follows by elementary estimates and the behaviour of the function $f(A)=A^{\tau p} \exp\left(-\mu A^{q}\right)$ for $A \in [0,+\infty)$.
\end{proof}
\begin{lemma}\label{lem:1-exp}
	Let  $\{\beta_j \}_{j \in \N^\ast}$ be a sequence of real numbers. Then, for each $j \in \N^\ast$ there exist $l(j) \in \Z$ such that
	\[
		|1 - e^{2\pi i \beta_j}| \geqslant 4 \ | \beta_j + l(j)|.
	\]
\end{lemma} 

\begin{proof}
	See Proposition 5.7 in \cite{AGK18}.
\end{proof}
We can now state results that provide sufficient and necessary conditions for the global hypoellipticity of $\caL$,
involving the change of sign properties of $b$.

\begin{thm}\label{suff1}
Suppose that either $b<0$, or $b>0$, that is, $b$ does not change sign and $b \neq 0$ on $[0,2\pi]$. Then, $\mathcal L$ is globally hypoelliptic. 
\end{thm}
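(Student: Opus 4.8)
The plan is to reduce the global hypoellipticity of $\caL$ to a decay estimate, uniform in the derivative order, for the Fourier coefficients of the unique periodic solution of each ODE \eqref{diffe-equations-t}. So assume $u\in\Fc'(\T\times X)$ with $f=\caL u\in\Fc(\T\times X)$, and write $u=\sum_j u_j\otimes\phi_j$ and $f=\sum_j f_j\phi_j$ as in Lemma \ref{lem:thetaseries} and Theorem \ref{thm:reprF}, so that the $u_j$ solve \eqref{diffe-equations-t}. The finitely many terms with $\lambda_j=0$ satisfy $D_t u_j=f_j$, hence $u_j$ is an antiderivative of a Gevrey function and $u_j\otimes\phi_j\in\Fc$; these may be set aside. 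For $\lambda_j\neq0$: since $b$ has constant sign and does not vanish on $[0,2\pi]$, $b_0\neq0$, hence $\lambda_j c_0\notin\Z$, the homogeneous ODE has no nonzero periodic solution, and $u_j$ is the smooth function furnished by Proposition \ref{solutions-t}. After the harmless substitution $t\mapsto-t$ (which leaves $\Fc$, $\Fc'$ and the operator class invariant, changing $b$ into $-b(-\cdot)$) we may assume $b>0$ on $[0,2\pi]$ and work with \eqref{Solu-2-t}, written as $u_j(t)=\int_0^{2\pi}K_j(t,\zeta)f_j(t+\zeta)\,d\zeta$ with $K_j(t,\zeta)=i\,(e^{2\pi i\lambda_j c_0}-1)^{-1}\exp\!\bigl(i\lambda_j\int_t^{t+\zeta}c\bigr)$.

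The crucial point is the bound, uniform in $j$ with $\lambda_j\geq1$,
$$
\sup_{t\in\T}\int_0^{2\pi}\bigl|\partial_t^{k}K_j(t,\zeta)\bigr|\,d\zeta\ \le\ \frac{C_0\,C_1^{k}\,(k!)^{s}}{\lambda_j},\qquad k\in\Z_+,
$$
with $C_0,C_1$ depending only on $c$. Three facts combine: (i) $b>0$ makes $|e^{2\pi i\lambda_j c_0}-1|\ge 1-e^{-2\pi\lambda_j b_0}$ bounded below and $\bigl|\exp(i\lambda_j\int_t^{t+\zeta}c)\bigr|=\exp(-\lambda_j\int_t^{t+\zeta}b)\le e^{-b_*\lambda_j\zeta}$, where $b_*=\min_{[0,2\pi]}b>0$; (ii) by Fa\`a di Bruno, $\partial_t^{k}\exp(i\lambda_j\int_t^{t+\zeta}c)$ equals that exponential times a Bell polynomial in the quantities $\partial_t^{l}\!\int_t^{t+\zeta}c=c^{(l-1)}(t+\zeta)-c^{(l-1)}(t)$, each of which is bounded by $\|c^{(l)}\|_\infty|\zeta|\le A^{l+1}(l!)^{s}|\zeta|$ — the extra factor $\zeta$ being decisive — so a partition of $k$ into $|m|$ parts contributes $\lesssim(\lambda_j|\zeta|)^{|m|}(k!)^{s}$ up to combinatorial constants absorbed in $C_1^k$; (iii) $\int_0^{2\pi}e^{-b_*\lambda_j\zeta}\zeta^{p}\,d\zeta\le p!\,(b_*\lambda_j)^{-p-1}$, so with $p=|m|\geq1$ the powers $\lambda_j^{|m|}$ cancel and only the single gain $\lambda_j^{-1}$ remains, for every $k$. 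The Gevrey-$s$ bookkeeping of (ii)–(iii) through the Bell polynomials is the technical heart, and is done by the standard Gevrey-class estimates, exactly as in \cite{AGK18,ABC25}; the finitely many remaining $j$ are treated directly. Note that without the factor $\zeta$ one would only obtain $\lambda_j^{k-1}$, which is too weak.

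Granting the kernel bound, Leibniz' rule gives, uniformly in $t$,
$$
\sup_{t}\bigl|\partial_t^{\gamma}u_j(t)\bigr|\ \le\ \frac{C_0}{\lambda_j}\sum_{k=0}^{\gamma}\binom{\gamma}{k}C_1^{k}(k!)^{s}\,\sup_{t}\bigl|\partial_t^{\gamma-k}f_j(t)\bigr|.
$$
Fix $M\in\N$. Since $f\in\Fc$, Corollary \ref{cor:estpow} yields $\sigma_M>1$, $D_M>0$ with $\sup_t|\partial_t^{l}f_j(t)|\le D_M^{\,l+1}(l!)^{\sigma_M}\widetilde\lambda_j^{-M}$ for all $j,l$. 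Inserting this with $l=\gamma-k$, using $\binom{\gamma}{k}(k!)^{s}((\gamma-k)!)^{\sigma_M}\le(\gamma!)^{\sigma'}$ with $\sigma'=\max\{s,\sigma_M\}$, summing the geometric series in $k$, and absorbing $C_0/\lambda_j\le C_0$ together with $\widetilde\lambda_j\asymp\lambda_j$, we obtain
$$
\sup_{t}\bigl|\partial_t^{\gamma}u_j(t)\bigr|\ \le\ C_M^{\,\gamma+1}(\gamma!)^{\sigma'}\,\widetilde\lambda_j^{-M},\qquad\gamma\in\Z_+,\ j\in\N^{\ast},
$$
for a suitable $C_M$. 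As $M$ was arbitrary and $\sigma'$ is allowed to depend on $M$, this is precisely condition \eqref{eq:sclsa} for the sequence $\{u_j\}$; hence, by Corollary \ref{cor:estpow} and Theorem \ref{thm:reprF}, $u=\sum_j u_j\phi_j\in\Fc(\T\times X)$, i.e.\ $\caL$ is globally hypoelliptic. (It matters to appeal to Theorem \ref{thm:reprF}/Corollary \ref{cor:estpow}, whose Gevrey index may vary with $M$, rather than to the fixed-index Corollary \ref{cor:suffhypo}.) I expect the uniform kernel estimate to be the main obstacle — in particular, retaining the $\lambda_j^{-1}$ gain at every order $k$ by exploiting the vanishing of $c^{(l-1)}(t+\zeta)-c^{(l-1)}(t)$ at $\zeta=0$.
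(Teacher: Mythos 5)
Your proposal is correct and takes essentially the same route as the paper's proof: Fourier reduction to the ODEs \eqref{diffe-equations-t}, the explicit formulas \eqref{Solu-1-t}--\eqref{Solu-2-t}, Fa\`a di Bruno combined with the Mean Value Theorem to extract a factor $\zeta$ for each block of the partition, the exponential decay in $\lambda_j\zeta$ to retain a uniform $\lambda_j^{-1}$ gain at every derivative order, and Corollary \ref{cor:estpow} (with $M$-dependent Gevrey index) to conclude $u\in\Fc$. The only difference is organizational: the paper absorbs $\bigl(\lambda_j\bigl|\int_{t-\zeta}^t b\bigr|\bigr)^{k}$ pointwise via Lemma \ref{lem:expA} after splitting $|\caH_j|=|\caH_j|^{1/2}|\caH_j|^{1/2}$ and integrates $|\caH_j|^{1/2}$ once at the end, whereas you integrate $(\lambda_j\zeta)^{p}e^{-b_*\lambda_j\zeta}$ directly by the Gamma-function bound --- the same cancellation carried out in a different order.
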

\begin{thm}\label{nec1}
Suppose that $b$ is not identically zero on any subinterval in $[0,2\pi]$. If $b$ changes sign, $\caL$ is not globally hypoelliptic.
\end{thm}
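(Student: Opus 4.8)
The plan is to construct, under the hypothesis that $b$ changes sign, a distribution $u \in \Fc'(\T\times X)$ with $\caL u \in \Fc(\T\times X)$ (in fact $\caL u = 0$) but $u \notin \Fc(\T\times X)$, thereby violating global hypoellipticity. Working with the Fourier decomposition $u = \sum_j u_j \otimes \phi_j$ as in \eqref{eq:uexp}, this reduces to finding a sequence $\{u_j\}_{j\in\N^\ast}$ of solutions of the homogeneous equations $D_t u_j + c(t)\lambda_j u_j = 0$ that grows too fast in $j$ to be admissible in $\Fc$, yet is still controlled enough to define an element of $\Fc'$ via Theorem \ref{thm:seq}.

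First I would recall that the homogeneous equation $D_t u_j + c(t)\lambda_j u_j = 0$ admits the periodic solution $u_j(t) = \exp\!\left(-i\lambda_j \int_0^t c(r)\,dr\right)$ precisely when $\lambda_j c_0 \in \Z$; in general, up to the exponential factor $e^{-i\lambda_j c_0 t}$ one has the quasi-periodic primitive $C_j(t) = \int_0^t (c(r)-c_0)\,dr$, which is genuinely $2\pi$-periodic. The key point is that since $b = \operatorname{Im} c$ changes sign and is not identically zero on any subinterval, the real-valued function $B(t) = \int_0^t (b(r)-b_0)\,dr$ is a non-constant periodic function, hence attains a strict maximum value $B_{\max} > \min B = B_{\min}$ over a full period, and on $[0,2\pi]$ it oscillates. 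Consequently $|u_j(t)| = \exp(\lambda_j B(t))$ (modulo the bounded factor coming from $a$ and the $c_0$-part) has supremum over $t$ growing like $e^{\lambda_j B_{\max}}$, which is exponentially large in $\lambda_j$, hence super-polynomially large in $j$ by Remark \ref{rem:polyboundlambda}. This forces $u \notin \Fc$, since membership in $\Fc$ requires, by Corollary \ref{cor:estpow}, decay of $\sup_t|u_j(t)|$ faster than any negative power of $\widetilde\lambda_j$.

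Next I would check that $u = \sum_j u_j\otimes\phi_j$ nonetheless lies in $\Fc'$. For this I would select a subsequence of indices $j$ (those realising the worst growth) and rescale: set $v_j(t) = u_j(t)/\sup_t|u_j(t)|$, so that $\|v_j\|_{L^\infty}=1$, and more generally, differentiating the explicit exponential formula and using that $c \in \mathcal{G}^s(\T)$ together with Lemma \ref{lem:expA} (with the roles of $p,q$ chosen to absorb the polynomial-in-$\lambda_j$ factors produced by each $t$-derivative against the exponential), obtain Gevrey bounds $\|v_j\|_{\mathcal{G}^{\sigma,C}(\T)} \leq B$ uniformly in $j$ for suitable $\sigma > 1$, $C>0$. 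Then the functionals $\vartheta_j := v_j\,dt$ (integration against the smooth periodic function $v_j$), or rather $\vartheta_j$ defined so that $\langle \vartheta_j,\psi\rangle = \int_{\T^n} v_j(t)\psi(t)\,dt$, satisfy $|\langle\vartheta_j,\psi\rangle| \leq B\|\psi\|_{\mathcal{G}^{\sigma,C}}$ for all $\sigma,C$, which is exactly the hypothesis of Theorem \ref{thm:seq} with $M=0$; hence $\vartheta = \sum_j \vartheta_j\otimes\phi_j \in \Fc'$. Undoing the rescaling (i.e. multiplying back the coefficients $\sup_t|u_j|$ on the chosen subsequence and setting the other coefficients to zero) one still lands in $\Fc'$ because Theorem \ref{thm:seq} allows polynomial weights $|\widetilde\lambda_j|^M$ — but here the issue is that the true weights are exponential, not polynomial, so a direct application fails.

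This last point is the main obstacle, and it is where the sign change of $b$ is essential rather than merely convenient. The resolution I would pursue is the standard Hounie-type trick (cf. \cite{Hou79, AGK18}): because $B(t)$ attains its maximum, one does \emph{not} use the full homogeneous solution but rather the solution of the \emph{inhomogeneous} equation with a carefully chosen right-hand side $f_j$ supported near the maximum of $B$, exploiting formulas \eqref{Solu-1-t}--\eqref{Solu-2-t}; alternatively, one picks $u_j$ on a sparse subsequence and accepts that $u \notin \Fc$ while $f = \caL u$, built from the corrections, does lie in $\Fc$ because the right-hand sides can be taken with coefficients decaying faster than any polynomial. Concretely, I would fix points $t_1$ where $b(t_1) > 0$ and $t_2$ where $b(t_2) < 0$, so that the oriented integrals $\int_{t-\zeta}^t b$ appearing in the exponents of \eqref{Solu-1-t}--\eqref{Solu-2-t} can be made positive and large for suitable $\zeta$, use \eqref{Solu-1-t} near one point and \eqref{Solu-2-t} near the other to keep the integrals from blowing up where they shouldn't, and thereby manufacture $u_j$ with $\sup_t |u_j(t)|$ of exponential size in $\lambda_j$ while $\sup_t|f_j(t)|$ decays rapidly; summing against $\phi_j$ gives $u\in\Fc'\setminus\Fc$ with $\caL u = f \in \Fc$. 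Verifying the Gevrey-in-$t$ estimates on $f_j$ and the $\Fc'$-membership of $u$ simultaneously — balancing the exponential growth in the $u_j$ against the required rapid decay for $f_j$ — is the delicate computational core, and it proceeds exactly as in \cite[Section 5]{ABC25}, to which the detailed estimates can be referred.
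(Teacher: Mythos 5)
Your proposal stalls exactly at the obstruction you yourself identify. In the second paragraph you correctly note that coefficients of an element of $\Fc'$ must obey polynomial bounds in $\widetilde\lambda_j$ (Lemma \ref{lem:thetaseries}, estimate \eqref{eq:thetaseriesest}), so that exponentially growing $u_j$ are inadmissible; yet your final plan explicitly aims to ``manufacture $u_j$ with $\sup_t|u_j(t)|$ of exponential size in $\lambda_j$'' and then claims $u\in\Fc'\setminus\Fc$. That cannot work: such a $u$ fails to lie in $\Fc'$ at all, so it is not a counterexample to \eqref{eq:defgh}, and deferring ``the delicate computational core'' to \cite[Section 5]{ABC25} does not close this gap. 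The conflation is between ``$u\notin\Fc$'' and ``$u_j$ must blow up'': by Corollary \ref{cor:estpow}, to exit $\Fc$ it suffices that $\sup_t|u_j(t)|$ fail to decay faster than every power of $\lambda_j$, and the paper exploits precisely this weaker requirement. Also, your appeal to \eqref{Solu-1-t}--\eqref{Solu-2-t} presupposes $\lambda_j c_0\notin\Z$ and plays no role in the actual construction.

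The paper's proof instead chooses, via Lemma \ref{lem:bchngsign} (which is where the sign change of $b$ enters, being \emph{equivalent} to the existence of such base points), a point $t^*$ and a collar $I^*=[\alpha^*,\gamma^*]\cup[\delta^*,\beta^*]$ with $\mathcal{B}_{t^*}(t)\le 0$ on a full period and $\mathcal{B}_{t^*}<-c^*$ on $I^*$, together with Gevrey cutoffs $g^*,\psi^*$ ($\psi^*\equiv1$ on $[\alpha^*,\beta^*]\supset\supp g^*$). Setting $u_j(t)=g^*(t)\exp\!\left[\lambda_j\psi^*(t)\bigl(\mathcal{B}_{t^*}(t)-i A_{t^*}(t)\bigr)\right]$ gives $|u_j|\le 1$ everywhere and $|u_j(t^*)|=1$, so $u=\sum_j u_j\otimes\phi_j$ lies in $\Fc'$ (polynomial bounds hold trivially) but not in $\Fc$ (no decay in $j$). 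The whole exponential gain is transferred to the right-hand side: since the exponential solves the homogeneous ODE where $\psi^*\equiv1$, one gets $f_j=-i\,(g^*)'(t)\exp[\lambda_j\psi^*(\mathcal{B}_{t^*}-iA_{t^*})]$, supported in $I^*$, hence $|\partial_t^\gamma f_j|\lesssim C^{\gamma+1}(\gamma!)^s\lambda_j^{\gamma}e^{-c^*\lambda_j}$, and Lemma \ref{lem:expA} with Remark \ref{rem:polyboundlambda} converts this into the rapid decay required by Corollary \ref{cor:suffhypo}, so $f=\caL u\in\Fc$. In short: the correct counterexample uses bounded, non-decaying coefficients obtained by cutting off a decaying exponential, not exponentially large ones; without this change your argument does not produce an admissible $u\in\Fc'$.
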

\begin{proof}[Proof of Theorem \ref{suff1}]
Let us assume $b<0$ and consider the solutions of \eqref{diffe-equations-t} as given by \eqref{Solu-1-t}. The argument for $b>0$ is completely similar,
employing instead \eqref{Solu-2-t}, and is then omitted. Since 
\[
	\Theta_j = \left|\dfrac{i}{1 - e^{-  2 \pi i\lambda_j c_0}}\right|\to 1,
\]
 we may write, for any $\gamma\in\Z_+$, $t\in[0,2\pi]$,
\[
	|\partial_t^{\gamma} u_j(t)| \leq C \sum_{\ell = 0}^{\gamma}\binom{\gamma}{\ell} 
	\int_{0}^{2\pi}  \left|\partial_t^{\ell}\exp \left(-i\lambda_{j} \int_{t-\zeta}^{t}c(r)dr\right)\right| |\partial_t^{\gamma-\ell}f_{j}(t-\zeta)|d\zeta.
\]
	
Let us set, for convenience,
\[
	\caH_j(t,\zeta) = \exp \left(-i\lambda_{j} \int_{t-\zeta}^{t}c(r)dr\right)
	\Rightarrow
	|\caH_j(t,\zeta)|=\exp \left(\lambda_{j} \int_{t-\zeta}^{t}b(r)dr\right),
\]
and
\[
	-\theta = \max_{r\in[0,2\pi]} b(r) < 0, \, -\vartheta = \min_{r\in[0,2\pi]} b(r) < 0\Rightarrow -\vartheta\le b(r) \le -\theta, r\in[0,2\pi].
\]
Then, for any $\mu>0$, to be fixed later,
\begin{equation}\label{eq:intexp}
\begin{aligned}
		\int_{0}^{2\pi}\exp\left(\mu \lambda_{j} \int_{t-\zeta}^{t}b(r)dr\right)d\zeta & \leqslant \int_{0}^{2\pi}\exp\left(-\mu  \lambda_{j} \theta\zeta \right)d\zeta
        \\&= \dfrac{1}{\mu  \lambda_{j}\theta}\left(1 - \exp(-2\pi\mu  \lambda_{j}\theta)\right) \\ 
        & \leq 
	C_1 \lambda_j^{-1}\le C_2 j^{-\varrho'},	
\end{aligned}
\end{equation}
since $\lambda_{j} \geq C'j^{\varrho'}$. We also have, for any $k\in\N$ and $\zeta\in(0,2\pi]$,
\[
	\left|\int_{t-\zeta}^tb(r)\,dr\right|^{-k}=|b(r_{t,\zeta})\zeta|^{-k}\le\theta^{-k}\zeta^{-k}.
\]

\smallskip

By the Fa\`a di Bruno formula, for $\ell\ge1$,
\[
	\partial_t^\ell  \mathcal{H}_j(t,\zeta) = \sum_{\Delta(k), \, \ell}
	\frac{(-i\lambda_j)^k}{k!}
	\frac{\ell!}{\ell_1! \cdots \ell_k! }
	\left(\prod_{\nu=1}^k
	\partial_t^{\ell_\nu-1}(c(t)-c(t-\zeta)) \right)
	\mathcal{H}_j(t,\zeta),
\]
where we have employed the notation
	$
	\sum\limits_{\Delta(k), \, \ell} = \sum\limits_{k=1}^\ell\sum\limits_{\stackrel{\ell_1+\ldots+\ell_k=\ell}{\ell_\nu \geq 1, \nu=1,\dots,k}}.
	$	
Since $c\in\caG^s(\T)$, by the Mean Value Theorem, for any $k\in\N^\ast$ and $\zeta\in[0,2\pi]$,
\[
	\left| \prod_{\nu=1}^k
	\partial_t^{\ell_\nu-1}(c(t)-c(t-\zeta))  \right| = 
	\prod_{\nu=1}^k |(\partial_t^{\ell_\nu}c)(t_{\nu t \zeta})\cdot\zeta| 
	\leq \zeta^k C_3^{\ell}\left[\prod_{\nu=1}^k \ell_\nu!\right]^s.
\]

It follows, by Lemma \ref{lem:expA} with $A=\displaystyle-\lambda_j\int_{t-\zeta}^tb(r)\,dr$, $\tau=k$, $p=q=1$, $\mu=1/2$,
for any $j,\ell\in\N^\ast$, $\zeta\in(0,2\pi]$,
\begin{align*}
	|\partial_t^\ell  \mathcal{H}_j(t,\zeta)| &\leq 
	\sum_{\Delta(k), \, \ell}
	\frac{\lambda_j^k}{k!}
	\frac{\ell!}{\ell_1! \cdots \ell_k! }
	\zeta^k C_3^{\ell}\left[\prod_{\nu=1}^k \ell_\nu!\right]^s
	|\caH_j(t,\zeta)|
	\\
	&=C_3^{\ell}\sum_{\Delta(k), \, \ell}
	\frac{1}{k!}
	\frac{\ell!}{\ell_1! \cdots \ell_k! }
	\zeta^k \left|\int_{t-\zeta}^tb(r)\,dr\right|^{-k}
	\left[\prod_{\nu=1}^k \ell_\nu!\right]^s\cdot
	\\
	&\phantom{=C_3^{\ell}\sum_{\Delta(k), \, \ell}}
	\cdot
	\left|\left(-\lambda_j\int_{t-\zeta}^tb(r)\,dr\right)^{k}|\caH_j(t,\zeta)|^\frac{1}{2}\right| \, |\caH_j(t,\zeta)|^\frac{1}{2}
	\\
	&\le C_3^{\ell}\sum_{\Delta(k), \, \ell}
	\frac{\ell!}{k!}
	\theta^{-k}
	\left[\prod_{\nu=1}^k \ell_\nu!\right]^{s-1} C_4^k k! \, |\caH_j(t,\zeta)|^\frac{1}{2}
	\\
	&\le C_5^{\ell} \,\ell!\sum_{\Delta(k), \, \ell}	\left[\prod_{\nu=1}^k \ell_\nu!\right]^{s-1} |\caH_j(t,\zeta)|^\frac{1}{2}.
\end{align*}
Now, recalling \eqref{eq:intexp}, that $f\in\Fc$ implies that $f_j$, $j\in\N^\star$, satisfies Condition \eqref{eq:sclsa} in Corollary \ref{cor:estpow},
and that the estimate above holds a.e. $\zeta\in[0,2\pi]$, setting $\widetilde{\sigma}_M=\max\{s,\sigma_M\}$, we find, for any $j,M,\gamma\in\N^\ast$,
\begin{align*}
	|\partial_t^{\gamma} u_j(t)| &\leq C \left\{\sum_{\ell = 1}^{\gamma}\frac{\gamma!}{\ell!\,(\gamma-\ell)!}
	C_5^{\ell} \,\ell!\sum_{\Delta(k), \, \ell}	\left[\prod_{\nu=1}^k \ell_\nu!\right]^{s-1} 
	C_M^{\gamma-\ell+1}[(\gamma-\ell)!]^{\sigma_M}\lambda_j^{-M}\cdot\right.
	\\
	&\cdot\int_{0}^{2\pi} |\caH_j(t,\zeta)|^\frac{1}{2}\,d\zeta
	\\
	&+C_M^{\gamma+1}(\gamma!)^{\sigma_M}\lambda_j^{-M}\int_{0}^{2\pi} |\caH_j(t,\zeta)|\,d\zeta
	\left.\rule{0mm}{9mm}\right\}
	\\
	&\le \widetilde{C}_M^{\gamma+1}\lambda_j^{-M-1}(\gamma!)^{\widetilde{\sigma}_M}\left\{
	1+\sum_{\ell = 1}^{\gamma}\left[\frac{(\gamma-\ell)!}{\gamma!}\right]^{\widetilde{\sigma}_M-1}
	\sum_{\Delta(k), \, \ell}\left[\prod_{\nu=1}^k \ell_\nu!\right]^{\widetilde{\sigma}_M-1}
	\right\}.
\end{align*}
Observing that
\begin{align*}
	\sum_{\ell = 1}^{\gamma}\left[\frac{(\gamma-\ell)!}{\gamma!}\right]^{\widetilde{\sigma}_M-1}&\sum_{\Delta(k), \, \ell}	
	\left[\prod_{\nu=1}^k \ell_\nu!\right]^{\widetilde{\sigma}_M-1}
	=
	\sum_{\ell = 1}^{\gamma}\left[\frac{\ell!(\gamma-\ell)!}{\gamma!}\right]^{\widetilde{\sigma}_M-1}
	\sum_{\Delta(k), \, \ell}\left[\frac{\prod_{\nu=1}^k \ell_\nu!}{\ell!}\right]^{\widetilde{\sigma}_M-1}
	\\
	\le\sum_{\ell = 1}^{\gamma}&\sum_{\Delta(k), \, \ell}1 = 
	\sum_{\ell = 1}^{\gamma}\sum_{k=1}^\ell\begin{pmatrix}\ell-1\\k-1\end{pmatrix}=\sum_{\ell = 1}^{\gamma}2^{\ell-1}=\frac{2^\gamma-1}{2-1}<2^\gamma,
\end{align*}
we conclude that, for any $j,M,\gamma\in\N^\ast$, for different $C_M>0$, $\sigma_M>1$,
\begin{equation}\label{eq:ujFest}
	\lambda_{j}^{M+1} \sup_{t \in \T}|\partial_t^{\gamma} u_j(t)|\leq C_{M}^{\gamma +1} (\gamma!)^{\sigma_M}.
\end{equation}
In view of \eqref{eq:intexp}, we see that the estimate \eqref{eq:ujFest} holds true also for $\gamma=0$.
By Corollary \ref{cor:estpow}, it follows $u\in\Fc$. The proof is complete.
\end{proof}
We investigate now the effect of a change of sign condition on the coefficient $b(t)$, namely, by admitting the existence of 
$t^+,t^{-}\in[0,2\pi]$ such that
\[
b(t^+)>0 \ \textrm{ and } \ b(t^-)<0.
\]
This provides the necessary condition for the global hypoellipticity stated in Theorem \ref{nec1}.

In order to tackle this problem, we define, for each $\eta \in [0,2\pi]$, the function $\mathcal{B}_{\eta}: [0,2\pi] \to \R$ given by
\[
	\mathcal{B}_{\eta}(t) = \int_{\eta}^{t}b(s)\,ds.
\]	
\begin{lemma}\label{lem:bchngsign}
	Let $b$ be a smooth, real-valued, $2\pi$-periodic function on $\R$, such that $b \not \equiv 0$ on any interval. 
	Then, the following properties are equivalent:
	\begin{enumerate}
		\item[a)] $b$ changes sign;
		\item[b)] there exists $t_0\in \R$ and $t^*, t_* \in (t_0, t_0+2\pi)$ such that
		\[
			\forall t \in (t_0, t_0 + 2\pi] \; \mathcal{B}_{t^*}(t)  \leqslant   0 
			\text{ and } 
			\forall t \in (t_0, t_0 + 2\pi[ \; \mathcal{B}_{t_*}(t) \geqslant   0;
		\]
		\item[c)] there exists $t_0 \in \R$, partitions
		\begin{align*}
			& t_0 < \alpha^* < \gamma^* < t^* <\delta^* <\beta^* < t_0 +2\pi,  \\[1mm]
			& t_0 < \alpha_* < \gamma_* < t_* <\delta_* <\beta_* < t_0 +2\pi,
		\end{align*}
		and  positive constants $c^*, c_*$, such that the following estimates hold:
		\begin{align}
			& \max_{t\in [\alpha^*, \gamma^*] \bigcup [\delta^*,\beta^*] } \mathcal{B}_{t^*} (t)  < -c^*, and  \label{ch-sign-max1b}\\[1mm]
			& \min_{t\in [\alpha_*, \gamma_*] \bigcup [\delta_*,\beta_*] } \mathcal{B}_{t_*} (t)  >  c_*.   \label{ch-sign-min1b}
		\end{align}
	\end{enumerate}
\end{lemma}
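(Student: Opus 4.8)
The plan is to prove the cyclic chain $(a)\Rightarrow(b)\Rightarrow(c)\Rightarrow(a)$, working throughout with a fixed primitive $G$ of $b$ on $\R$, so that $\mathcal{B}_\eta(t)=G(t)-G(\eta)$ and $G(t+2\pi)=G(t)+2\pi b_0$ with $b_0=(2\pi)^{-1}\int_0^{2\pi}b(s)\,ds$. Two elementary facts will be used repeatedly: since $b\not\equiv 0$ on any interval, $G$ is nowhere locally constant, so every level set $\{\,t:G(t)=c\,\}$ has empty interior; and since the inequalities in $(b)$ are non-strict and $G$ is continuous, the requirement ``$\mathcal{B}_{t^*}\le 0$ on $(t_0,t_0+2\pi]$'' is equivalent to the same condition on the open interval. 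In particular $(b)$ is unchanged under $b\mapsto -b$ (with $t^*$ and $t_*$ interchanged), as is $(a)$, so in proving $(a)\Rightarrow(b)$ we may assume $b_0\ge 0$.

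For $(a)\Rightarrow(b)$ I would distinguish $b_0=0$ and $b_0>0$. If $b_0=0$, then $G$ is $2\pi$-periodic and non-constant; since $\{G=\min G\}\cup\{G=\max G\}$ is nowhere dense we may choose $t_0$ with $\min G<G(t_0)<\max G$, and then on the full period $[t_0,t_0+2\pi]$ both the maximum and the minimum of $G$ are attained at interior points $t^*,t_*$, which is exactly $(b)$. If $b_0>0$, pick a local maximum $\mu$ of $G$ (one exists because, $b$ changing sign, we can take $p<q$ with $b(p)>0>b(q)$, and $G$ attains its maximum over $[p,q]$ at an interior point) and set $t_0=\mu+\delta$ with $\delta>0$ small. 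Since the critical set of $G$ is $2\pi$-periodic, $\mu+2\pi$ is also a local maximum and $G(\mu+2\pi)=G(\mu)+2\pi b_0>G(\mu)$; as $b\not\equiv 0$ just to the right of $\mu$ and of $\mu+2\pi$, both $G(t_0)$ and $G(t_0+2\pi)$ lie strictly below $G(\mu+2\pi)$ while $\mu+2\pi$ is interior, so the maximum of $G$ over $[t_0,t_0+2\pi]$ is interior. Likewise $G$ drops strictly below $G(t_0)$ just to the right of $t_0$, whereas $G(t_0+2\pi)=G(t_0)+2\pi b_0>G(t_0)$, so the minimum of $G$ over the window is interior as well. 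This is $(b)$; the case $b_0<0$ follows by the symmetry noted above.

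For $(b)\Rightarrow(c)$ I would take $t_0,t^*,t_*$ from $(b)$ and first observe $t^*\neq t_*$ (otherwise $G$ is constant near their common value). Since $G\le G(t^*)$ on $(t_0,t_0+2\pi]$ but $G$ is not locally constant at $t^*$, there are $\gamma^*\in(t_0,t^*)$ and $\delta^*\in(t^*,t_0+2\pi)$, arbitrarily near $t^*$, with $G(\gamma^*),G(\delta^*)<G(t^*)$; set $c^*=\frac{1}{2}\min\{G(t^*)-G(\gamma^*),\,G(t^*)-G(\delta^*)\}>0$. The open set $\{G<G(t^*)-c^*\}$ then contains neighbourhoods of $\gamma^*$ and $\delta^*$, hence intervals $[\alpha^*,\gamma^*]$ and $[\delta^*,\beta^*]$ with $t_0<\alpha^*$ and $\beta^*<t_0+2\pi$, and $\max_{[\alpha^*,\gamma^*]\cup[\delta^*,\beta^*]}\mathcal{B}_{t^*}<-c^*$ by compactness. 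The same construction at $t_*$ produces $\alpha_*,\gamma_*,\delta_*,\beta_*$ and $c_*>0$; shrinking the neighbourhoods keeps both ordered chains inside $(t_0,t_0+2\pi)$, which is $(c)$. Finally $(c)\Rightarrow(a)$ is immediate: $\mathcal{B}_{t^*}(\gamma^*)<-c^*<0=\mathcal{B}_{t^*}(t^*)$ forces $\int_{\gamma^*}^{t^*}b>0$, so $b>0$ somewhere, and $\mathcal{B}_{t_*}(\gamma_*)>c_*>0=\mathcal{B}_{t_*}(t_*)$ forces $\int_{\gamma_*}^{t_*}b<0$, so $b<0$ somewhere; thus $b$ changes sign.

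The main obstacle will be $(a)\Rightarrow(b)$ when $b_0\neq 0$: there $G$ is not periodic, and one must place the length-$2\pi$ window $[t_0,t_0+2\pi]$ so that \emph{both} extrema of $G$ over it are attained in the interior. Anchoring $t_0$ just past a local maximum of $G$ and exploiting $G(\mu+2\pi)=G(\mu)+2\pi b_0>G(\mu)$ is precisely what pulls the window's maximum inward; the preliminary reduction to $b_0\ge 0$, via the easy-to-overlook invariance of $(b)$ under $b\mapsto -b$, is the other point that needs a little care. Everything else is a routine continuity and compactness argument.
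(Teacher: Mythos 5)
Your cyclic scheme $(a)\Rightarrow(b)\Rightarrow(c)\Rightarrow(a)$, phrased through a primitive $G$ of $b$ with $G(t+2\pi)=G(t)+2\pi b_0$, is a reasonable self-contained route; the paper itself gives no argument and simply refers to Lemma 5.10 of \cite{AGK18}, so a direct proof is welcome. The implications $(b)\Rightarrow(c)$ and $(c)\Rightarrow(a)$, the case $b_0=0$, the reduction to $b_0\ge 0$, and the interior-maximum part of the case $b_0>0$ are all correct.

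There is, however, a genuine gap exactly at the point you yourself single out as delicate: in $(a)\Rightarrow(b)$ with $b_0>0$, the assertion that ``$G$ drops strictly below $G(t_0)$ just to the right of $t_0$'' does not follow from your stated choice $t_0=\mu+\delta$ with $\delta$ small and $G(\mu+\delta)<G(\mu)$. Proximity to the local maximum $\mu$ does not force $G$ to be descending at $t_0$; the point $\mu+\delta$ may itself be a local minimum of $G$, and for some admissible $\delta$ the desired conclusion actually fails. Concretely, let $b$ be smooth, $2\pi$-periodic, negative precisely on a short interval $(\mu,\mu+\delta_1)$ (mod $2\pi$) with a very shallow dip and $b_0>0$; then $\mu$ is the unique local maximum per period, $\delta_1$ can be taken as small as you like, $G(\mu+\delta_1)<G(\mu)$, and yet $G(t)>G(\mu+\delta_1)$ for every $t\in(\mu+\delta_1,\mu+\delta_1+2\pi]$ (on the stretch up to $\mu+2\pi$ the function increases, and on the next dip it only descends to $G(\mu+\delta_1)+2\pi b_0$). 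For that choice $t_0=\mu+\delta_1$ the minimum over the window is attained only at the left endpoint, so no interior $t_*$ exists, and condition $(b)$ fails for this $t_0$. The repair is easy and local: since $G<G(\mu)$ at points arbitrarily close to the right of $\mu$, there are arbitrarily small $\delta$ with $b(\mu+\delta)<0$; if at such a point $G=G(\mu)$, move slightly further right to get both $b(\mu+\delta)<0$ and $G(\mu+\delta)<G(\mu)$. With this strengthened choice of $t_0$, the function really does decrease immediately to the right of $t_0$, and your argument for both extrema being interior goes through verbatim. The same sharpening also cleans up the slightly loose claim that $G(t_0+2\pi)<G(\mu+2\pi)$ ``as $b\not\equiv 0$ just to the right of $\mu$'': that inequality needs the strict bound $G(\mu+\delta)<G(\mu)$, i.e.\ $\delta$ chosen off the level set $\{G=G(\mu)\}$, which the corrected choice guarantees.
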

\begin{proof}
	See Lemma 5.10 in \cite{AGK18}.
\end{proof}
\begin{proof}[Proof of Theorem \ref{nec1}]
	The proof is a variant of the one of Theorem 3.7 in \cite{AC} (see also Theorem 5.9 in \cite{AGK18}).
	With the same notation of Lemma \ref{lem:bchngsign}, consider the intervals
	\begin{equation*}
		I^* \equiv [\alpha^*, \gamma^*] \cup [\delta^*,\beta^*] \ \ \textrm{ and } \ \
		I_* \equiv [\alpha_*, \gamma_*] \cup [\delta_*,\beta_*],
	\end{equation*}
	and choose $g^*, \psi^* \in \mathcal{G}^{s}(\T)$ such that
	\begin{align*}
		&  \operatorname{supp}(\psi^*) \subset [0,2\pi] \ \mbox{ and } \ \psi^*|_{[\alpha^*, \beta^*]}\equiv 1, \\[2mm]
		&  \operatorname{supp}(g^*) \subset [\alpha^*, \beta^*]  \ \mbox{ and } \ g^*|_{[\gamma^*,\delta^*]}\equiv 1.
	\end{align*}

	We assume that $\lambda_{j}>0$ and define $\{u_j\} \subset \mathcal{G}^{s}(\T)$ by
	\begin{equation*}
		u_j(t) = g^*(t) \exp\left[\lambda_j \psi^*(t) (\mathcal{B}_{t^*}(t) - i A_{t^*}(t))\right],
	\end{equation*}
	where $\mathcal{A}_{\eta}(t) = \int_{\eta}^{t}a(s)ds.$ Then, if $t \in \mbox{supp}(g^*)$ we get
	\begin{equation*}
		u_j(t) =
		g^*(t) \exp \left[ \lambda_j(\mathcal{B}_{t^*}(t) - i  A_{t^*}(t))\right],
	\end{equation*}
	and  $e^{\lambda_j \mathcal{B}_{t^*}(t)}\leqslant 1$, since  $\mathcal{B}_{t^*}(t) \leqslant 0$ on $I^*$. Since $|u_{j}(t^*)| = 1$, for any $j$, we have
	$u_{j}\to u \in  \mathscr{F}'\setminus \mathscr{F}.$

	Next, consider the sequence 
	\begin{equation*}
		f_j(t) = 	-i {g^{*}}'(t) \exp\left[\lambda_j\psi^*(t)(\mathcal{B}_{t^*}(t) - i  A_{t^*}(t))\right].
	\end{equation*}
	Note that $\supp(f_{j}) \subset I^*$, for any $j \in \N^\ast$. Hence
	\begin{align*}
		\left| \partial_t^{\gamma} f_{j}(t) \right| & \leq 
		\sum_{\ell=0}^{\gamma}\binom{\gamma}{\ell}\left|\partial_t^{\gamma-\ell+1}\big(g^*(t)\big)\right| \ \left| \partial_t^{\ell}\left(\exp \left[ \lambda_j(\mathcal{B}_{t^*}(t) - i  A_{t^*}(t))\right]\right) \right| \\ 
		& \leq C_1^{\gamma+1}\sum_{\ell=0}^{\gamma}\binom{\gamma}{\ell} ((\gamma - \ell +1)!)^{s} |\lambda_j|^{\ell}		\exp(\lambda_j \mathcal{B}_{t^*}(t)) \\
		& \leq C_2^{\gamma+1} (\gamma!)^{s} \, |\lambda_j|^{\gamma}	\exp(\lambda_j \mathcal{B}_{t^*}(t)) \\
		& \leq   C_2^{\gamma+1} (\gamma!)^{s} \, |\lambda_j|^{\gamma} \exp(-c^* \lambda_j) \\
        & \leq   C_3^{\gamma+1} (\gamma!)^{s} \, j^{\gamma \rho} \exp(-c^* j^\rho),
	\end{align*}	
and employing Lemma \ref{lem:expA} we obtain
$$
 C_3^{\gamma+1} (\gamma!)^{s} \, j^{\gamma \rho} \exp(-c^* j^\rho) \leq  C_M^{\gamma+1} (\gamma!)^{s+1} |\lambda_j|^{-M}
$$
	for all $M \in \mathbb N$. Therefore,
	$$
f = \sum_{j \in \N^\ast}	f_{j}(t)\varphi_j \in  \mathscr{F}
	$$
implying that $\mathcal L$ is not globally hypoelliptic, since $\mathcal Lu=f$.
\end{proof}
\begin{rem}
	We remark that Theorem \ref{nec1} can be extended to the following case: there exist an interval $[t_0,t_1] \subset [0,2\pi]$ and $\delta >0$ such that
	\begin{align*}
		b(t) & > 0, \ \forall t \in(t_0 -\delta, t_0), \\
		b(t) & = 0, \  \forall t \in [t_0,t_1], \\
		b(t) & < 0, \ \forall t \in(t_1, t_1 + \delta).
	\end{align*}
	
	Indeed, in this case we may consider cutoff functions $g_0$ and $g_1$ such that
	\begin{align*}
		&  \operatorname{supp}(g_0) \subset [t_0-\epsilon,t_0+\epsilon] \ \mbox{ and } \ g_0|_{[t_0-\epsilon/2,t_0+\epsilon/2]}\equiv 1, \\[2mm]
		&  \operatorname{supp}(g_1) \subset [t_1-\epsilon,t_1+\epsilon]  \ \mbox{ and } \ g_1|_{[t_1-\epsilon/2,t_1+\epsilon/2]}\equiv 1,
	\end{align*}
	for $\epsilon>0$ sufficiently small. Also, we set
	\begin{align*}
		B_0(t) & = \int_{t_0}^{t} b(s)ds, \ t \in \operatorname{supp}(g_0), \\
		B_1(t) & = \int_{t_1}^{t} b(s)ds, \ t \in \operatorname{supp}(g_1).
	\end{align*}
	Therefore, the sequence
	\begin{equation*}
		u_j(t) =
		g_0(t) \exp \left[ \lambda_j(B_{0}(t) - i  A(t))\right] +
		g_1(t) \exp \left[ \lambda_j(B_{1}(t) - i  A(t))\right],
	\end{equation*}
	where $A(t) = \int_{0}^{t}a(s)ds$, satisfies
	$u_{j}\to u \in  \mathscr{F}'\setminus \mathscr{F}$ and $\mathcal Lu \in \mathscr{F}$.
\end{rem}
\begin{rem}
	The case $b\equiv0$ is more delicate, and requires the study of further properties of the space $\Fc$, which is currently in progress.
	Like in other functional settings, for the analysis of this case we rely on the hypoellipticity properties of a constant coefficient operator
	of the form \eqref{eq:L_constant_coeff}, naturally associated with $\caL$. 
\end{rem}

\appendix

\section{Scattering calculus on asymptotically Euclidean manifolds}\label{app:scattering}

In this Appendix, we collect the basic facts and concepts about asymptotically Euclidean (or \textit{scattering}) manifolds 
and the scattering calculus. For further details, we refer the reader, e.g., to \cite{ME} and \cite{CDS19}.

\begin{defn}
Let $X$ be a smooth compact manifold with boundary $\partial X$. We say that the function $\rho_X \in \mathscr{C}^\infty(X)$ is a boundary defining function for the manifold $X$ if it satisfies:
\begin{enumerate}
    \item $\rho_X > 0$ on $X \setminus \partial X$;
    \item $\rho_X = 0$ precisely on $\partial X$;
    \item $d\rho_X \neq 0$ on $\partial X$.
\end{enumerate}
\end{defn}

\begin{defn}
An asymptotically Euclidean (or scattering) manifold is a 
smooth compact manifold $X$ with boundary, equipped with a Riemannian metric $g$ which, near the boundary, takes the form
\[
g = \frac{d\rho_X^2}{\rho_X^4} + \frac{g_\partial}{\rho_X^2},
\]
where $\rho_X$ is a boundary defining function and $g_\partial$ is a smooth symmetric 2-tensor that restricts to a Riemannian metric on $\partial X$.
\end{defn}

Such manifolds are also referred to as \emph{asymptotically Euclidean manifolds} or \emph{asymptotically flat manifolds}, due to the the fact that their Riemann curvature tensor is vanishing at the boundary.

One of the main points of interest in these manifolds lies in the fact that non-compact manifolds can be studied by considering the interior \( X \setminus \partial X \). They form a \emph{universal} class, as clarified in the following Proposition \ref{prop:universal}.

\begin{prop}
\label{prop:universal}
Any smooth compact manifold with boundary admits a scattering metric and can therefore be regarded as a scattering manifold.
\end{prop}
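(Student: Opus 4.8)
The plan is to build the Riemannian metric $g$ on $X$ by hand, forcing it to coincide with a model expression of the required form in a neighbourhood of $\partial X$ and patching it with an arbitrary metric in the interior, all via a partition of unity. As a preliminary step I would fix a \emph{global} boundary defining function $\rho_X$: covering $\partial X$ by finitely many boundary charts with local defining functions $\rho_i$, taking a partition of unity $\{\chi_i\}$ subordinate to them near $\partial X$, and setting $\rho_X=\sum_i\chi_i\rho_i+\psi$ with $\psi\in\mathscr{C}^\infty(X)$ non-negative, supported away from $\partial X$ and positive wherever $\sum_i\chi_i<1$; since $\rho_i(p)=0$ and the $d\rho_i|_p$ are positive multiples of a common conormal for $p\in\partial X$, one checks $\rho_X>0$ on $X\setminus\partial X$, $\rho_X^{-1}(0)=\partial X$, and $d\rho_X\ne0$ on $\partial X$. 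Using an auxiliary metric, the suitably normalised gradient flow of $\rho_X$ then yields, for some $\varepsilon>0$, a diffeomorphism $\Phi\colon\partial X\times[0,\varepsilon)\to U$ onto a neighbourhood of $\partial X$ with $\rho_X\circ\Phi(y,s)=s$.

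Next I would produce the model metric near the boundary. Pick any Riemannian metric $h$ on the closed manifold $\partial X$ (which exists by a partition of unity), and, writing $\pi\colon\partial X\times[0,\varepsilon)\to\partial X$ for the projection, transport the symmetric $2$-tensor $\dfrac{ds^2}{s^4}+\dfrac{\pi^\ast h}{s^2}$ to $U$ via $\Phi$, obtaining a tensor $g_{\mathrm{sc}}$ on $U$. For $v=a\,\partial_s+w$ with $w$ tangent to a boundary slice, this tensor evaluates to $a^2/s^4+h(w,w)/s^2$, which is positive whenever $v\ne0$ and $s>0$; hence $g_{\mathrm{sc}}$ is a genuine Riemannian metric on $U\setminus\partial X$, and by construction it has the exact form $\dfrac{d\rho_X^2}{\rho_X^4}+\dfrac{g_\partial}{\rho_X^2}$ with $g_\partial=\Phi_\ast\pi^\ast h$, a smooth symmetric $2$-tensor restricting to $h$ on $\partial X$.

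It then remains to glue. Choose $\chi\in\mathscr{C}^\infty(X)$ with $\chi\equiv1$ on $\rho_X^{-1}([0,\varepsilon/3])$ and $\operatorname{supp}\chi\subset\rho_X^{-1}([0,\varepsilon/2])\subset U$, fix any Riemannian metric $g_1$ on $X$, and set
\[
	g=\chi\,g_{\mathrm{sc}}+(1-\chi)\,g_1,
\]
where $\chi\,g_{\mathrm{sc}}$ is understood to vanish off $U$; this is a smooth symmetric $2$-tensor on all of $X$. At every point at least one of $\chi$, $1-\chi$ is positive, and where both are positive — which forces the point to lie in $U\setminus\partial X$, so that $g_{\mathrm{sc}}$ is defined and positive definite there — $g$ is a convex combination of two positive definite forms, hence positive definite; where only one is positive $g$ equals $g_{\mathrm{sc}}$ or $g_1$. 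Thus $g$ is a Riemannian metric on $X$, and on the neighbourhood $\rho_X^{-1}([0,\varepsilon/3))$ of $\partial X$ it equals $g_{\mathrm{sc}}=\dfrac{d\rho_X^2}{\rho_X^4}+\dfrac{g_\partial}{\rho_X^2}$, so $(X,g)$ is a scattering manifold.

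I expect no serious obstacle: the argument is a routine partition-of-unity construction. The only two points deserving care are the existence of a global boundary defining function together with a collar in which it becomes the normal coordinate (handled above by the normalised gradient flow), and the fact that the patched tensor remains positive definite across the transition region $\{\varepsilon/3<\rho_X<\varepsilon/2\}$, which follows from the convex-combination observation since both $g_{\mathrm{sc}}$ and $g_1$ are positive definite there.
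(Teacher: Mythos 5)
The paper itself offers no proof of this proposition: it appears in the appendix as a catalogue item, quoted from the scattering-calculus literature (Melrose \cite{ME}, \cite{CDS19}), so there is nothing to compare line by line. Your construction is the standard argument and it is correct. You build a global boundary defining function by patching local ones (the key points — $d\rho_X\neq 0$ on $\partial X$ because the local differentials are positive multiples of a common conormal and the $\rho_i d\chi_i$ and $d\psi$ terms vanish there, and strict positivity in the interior — are exactly the ones to check, and you check them); you straighten $\rho_X$ in a collar $\Phi\colon\partial X\times[0,\varepsilon)\to U$ with $\rho_X\circ\Phi(y,s)=s$ via the normalised gradient flow; you install the model tensor $\frac{ds^2}{s^4}+\frac{\pi^\ast h}{s^2}$ there, which under $\Phi$ is literally $\frac{d\rho_X^2}{\rho_X^4}+\frac{g_\partial}{\rho_X^2}$ with $g_\partial=\Phi_\ast\pi^\ast h$ a smooth symmetric $2$-tensor restricting to the metric $h$ on $\partial X$; and you glue with an arbitrary metric $g_1$ using a cutoff that is identically $1$ on a full neighbourhood of $\partial X$, so that no trace of $g_1$ survives where the scattering form is required, while positive definiteness in the transition region is the convex-combination observation. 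The only reading convention worth making explicit is that the resulting $g$ is a Riemannian metric on $X\setminus\partial X$ (it necessarily blows up at $\partial X$), which is how ``Riemannian metric'' must be understood in the paper's definition; with that understood, your two flagged delicate points (global defining function adapted to a collar, positivity across the overlap) are indeed the only ones, and both are handled adequately.
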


For the Euclidean space \( \mathbb{R}^d \), the most common compactifications are given by the \emph{radial compactification} and the \emph{stereographic projection}. These constructions proceed as follows.

\paragraph{Radial compactification.}

Consider the closed unit ball \( \mathbb{B}^d = \{x \in \mathbb{R}^d : |x| \leq 1\} \), with boundary \( \partial \mathbb{B}^d = \mathbb{S}^{d-1} \). Define a diffeomorphism \( \iota: \mathbb{R}^d \to \mathbb{B}^d{}^\circ \) by
\[
\iota(x) = \frac{x}{|x|} \left(1 - \frac{1}{|x|}\right), \quad |x| > 3,
\]
with inverse given by
\[
\iota^{-1}(y) = \frac{y}{|y|} (1 - |y|)^{-1}, \quad |y| \geq \frac{2}{3}.
\]
This \emph{radial compactification} introduces a differential structure at infinity. In polar coordinates, for large \( r \), the map becomes
\[
\iota(r, \varphi) = \left(1 - \frac{1}{r}, \varphi\right).
\]
A boundary defining function is given by
\[
\rho_{\mathbb{B}^d}(y) = \frac{1}{[\iota^{-1}(y)]}, \quad \text{where } [x] = |x| \text{ for } |x| > 3.
\]

\begin{prop}
In a collar neighborhood of \( \partial \mathbb{B}^d \), the Euclidean metric on \( \mathbb{R}^d \) pulls back to
\[
g = \frac{d\rho_{\mathbb{B}^d}^2}{\rho_{\mathbb{B}^d}^4} + \frac{g_{\mathbb{S}^{d-1}}}{\rho_{\mathbb{B}^d}^2},
\]
where \( g_{\mathbb{S}^{d-1}} \) denotes the standard round metric on the sphere.
\end{prop}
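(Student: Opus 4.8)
The plan is to reduce the claim to a one-line change of variables in polar coordinates, valid in the region where the radial compactification $\iota$ is given by the explicit formulas above. Since the assertion is local near $\partial\mathbb{B}^d$, i.e. for $|y|$ close to $1$, we may work entirely in the region $\{|x|>3\}$, where $\iota(r,\varphi)=(1-1/r,\varphi)$ in polar coordinates $x\leftrightarrow(r,\varphi)$, with $r=|x|$ and $\varphi\in\mathbb{S}^{d-1}$, and where $\rho_{\mathbb{B}^d}\circ\iota = 1/[\iota^{-1}\circ\iota] = 1/[x] = 1/r$. Recall also that on $\mathbb{R}^d\setminus\{0\}$ the Euclidean metric reads $g_{\mathrm{eucl}} = dr^2 + r^2 g_{\mathbb{S}^{d-1}}$, where $g_{\mathbb{S}^{d-1}}$ is the standard round metric on the unit sphere.

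First I would introduce $\rho = \rho_{\mathbb{B}^d}$ as the radial coordinate in the collar, so that $\rho = r^{-1}$ and the angular coordinate $\varphi$ is unchanged under $\iota$; the pair $(\rho,\varphi)$ is a genuine chart near $\partial\mathbb{B}^d$ precisely because $d\rho_{\mathbb{B}^d}\neq 0$ there. Then $r = \rho^{-1}$ gives $dr = -\rho^{-2}\,d\rho$, hence $dr^2 = \rho^{-4}\,d\rho^2$ and $r^2 = \rho^{-2}$, and substituting into $g_{\mathrm{eucl}} = dr^2 + r^2 g_{\mathbb{S}^{d-1}}$ yields
$$
(\iota^{-1})^* g_{\mathrm{eucl}} = \frac{d\rho_{\mathbb{B}^d}^2}{\rho_{\mathbb{B}^d}^4} + \frac{g_{\mathbb{S}^{d-1}}}{\rho_{\mathbb{B}^d}^2},
$$
which is precisely the claimed expression, with $g_\partial = g_{\mathbb{S}^{d-1}}$.

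There is essentially no serious obstacle here; the computation is routine. The only point that deserves a word of care is the angular factor: one must check that under $\iota$ the second coordinate is literally $\varphi\in\mathbb{S}^{d-1}$, so that $g_\partial$ is genuinely the round metric and not merely some symmetric $2$-tensor that restricts to it on $\partial\mathbb{B}^d$; this is immediate from the polar form $\iota(r,\varphi)=(1-1/r,\varphi)$. If one prefers to avoid polar coordinates altogether, an equivalent route is to compute $(\iota^{-1})^* g_{\mathrm{eucl}}$ directly in Cartesian-type coordinates on $\mathbb{B}^d$ and recognize the round metric by restriction to the level sets $|y|=\text{const}$; this is slightly more tedious but leads to the same conclusion.
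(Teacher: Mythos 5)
Your computation is correct and is the standard argument: in the region $|x|>3$ one has $\rho_{\mathbb{B}^d}\circ\iota=1/r$, and substituting $r=\rho^{-1}$ into $g_{\mathrm{eucl}}=dr^2+r^2g_{\mathbb{S}^{d-1}}$ immediately gives $\rho^{-4}d\rho^2+\rho^{-2}g_{\mathbb{S}^{d-1}}$, with the angular factor untouched since $\iota$ acts only radially. The paper itself states this proposition without proof (it is recalled as background from the scattering-calculus literature), and your argument is exactly the expected one, including the correct observation that $(\rho_{\mathbb{B}^d},\varphi)$ is a genuine collar chart because $d\rho_{\mathbb{B}^d}\neq 0$ at the boundary.
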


\paragraph{Stereographic Projection.}

Alternatively, the stereographic projection compactifies \( \mathbb{R}^d \) onto the upper hemisphere:
\[
\mathrm{SP}: \mathbb{R}^d \to \mathbb{S}^d_+ \subset \mathbb{R}^{d+1}, \quad z \mapsto \left( \frac{1}{\langle z \rangle}, \frac{z}{\langle z \rangle} \right), \quad \langle z \rangle = \sqrt{1 + |z|^2}.
\]
This transformation equips \( \mathbb{S}^d_+ \) with a scattering metric, where \( t_0 = \langle z \rangle^{-1} \) serves as a boundary defining function.

\begin{defn}
We define the space $\dot{\mathscr{C}}^\infty(\mathbb{S}_+^d)$ as the set of smooth functions vanishing to infinite order at the boundary:
\[
\dot{\mathscr{C}}^\infty(\mathbb{S}_+^d) = \bigcap_{k} \rho_{\mathbb{S}_+^d}^k \mathscr{C}^\infty(\mathbb{S}_+^d),
\]
where $\rho_{\mathbb{S}_+^d}$ is any boundary defining function.
\end{defn}

\begin{prop}
The stereographic pullback induces an isomorphism:
\[
\mathrm{SP}^*: \dot{\mathscr{C}}^\infty(\mathbb{S}_+^d) \rightarrow \mathscr{S}(\mathbb{R}^d).
\]
\end{prop}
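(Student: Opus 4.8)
The plan is to exploit that $\mathrm{SP}\colon\R^d\to\mathbb{S}_+^d\setminus\partial\mathbb{S}_+^d$ is a diffeomorphism onto the interior, so that the pullback $\mathrm{SP}^*$ is automatically a linear bijection between $\mathscr{C}^\infty$ of the interior and $\mathscr{C}^\infty(\R^d)$; the whole content of the statement is that this bijection carries infinite order vanishing at $\partial\mathbb{S}_+^d$ onto Schwartz decay at infinity, in both directions, together with the corresponding seminorm estimates. The computational inputs I would isolate first are: choosing $\rho_{\mathbb{S}_+^d}=t_0$ so that $\mathrm{SP}^*\rho_{\mathbb{S}_+^d}=\langle\cdot\rangle^{-1}$; the description, in a collar neighbourhood with coordinates $(\rho,\omega)\in[0,\varepsilon)\times\mathbb{S}^{d-1}$, of $\mathrm{SP}^{-1}$ by $|z|=\sqrt{1-\rho^2}\,\rho^{-1}$ and $z=|z|\,\omega$ (so $|z|\sim\rho^{-1}$); and the derivative estimates $\partial_{z_i}\rho=-z_i\rho^3=O(\rho^2)$, $\partial_{z_i}\omega_j=O(\rho)$, $\partial_\rho|z|=-\rho^{-2}(1-\rho^2)^{-1/2}=O(|z|^2)$, and $\partial_\omega=|z|\cdot(\text{smooth angular vector field})$.

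\textbf{Pullbacks of smooth functions, and $\mathrm{SP}^*\dot{\mathscr{C}}^\infty(\mathbb{S}_+^d)\subseteq\mathscr{S}(\R^d)$.} First, using the chain rule and the estimates above, for $\psi\in\mathscr{C}^\infty(\mathbb{S}_+^d)$ (smooth up to the boundary) one gets $|\partial_z^\beta(\mathrm{SP}^*\psi)(z)|\le C_\beta\langle z\rangle^{-|\beta|}$, since each $z$-derivative falls either on $\rho\circ\mathrm{SP}$, contributing a factor $\sim\langle z\rangle^{-2}$ and a $\partial_\rho$, or on the angular variables, contributing a factor $\sim\langle z\rangle^{-1}$ and an angular derivative, while all $(\rho,\omega)$-derivatives of $\psi$ stay bounded as $\rho\to0^+$. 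Now if $\phi\in\dot{\mathscr{C}}^\infty(\mathbb{S}_+^d)$, then for every $N\in\N$ we may write $\phi=\rho_{\mathbb{S}_+^d}^{N}\,\psi_N$ with $\psi_N\in\mathscr{C}^\infty(\mathbb{S}_+^d)$, so $\mathrm{SP}^*\phi=\langle z\rangle^{-N}\,\mathrm{SP}^*\psi_N$; combining the previous bound with $|\partial_z^\alpha\langle z\rangle^{-N}|\le C_{N,\alpha}\langle z\rangle^{-N-|\alpha|}$ and the Leibniz rule gives $|\partial_z^\beta(\mathrm{SP}^*\phi)(z)|\le C_{N,\beta}\langle z\rangle^{-N}$ for every $N$ and $\beta$. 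Since $N$ is arbitrary, $\mathrm{SP}^*\phi\in\mathscr{S}(\R^d)$, and the constants are controlled by finitely many $\dot{\mathscr{C}}^\infty$-seminorms of $\phi$, which gives continuity of $\mathrm{SP}^*$.

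\textbf{Surjectivity.} Conversely, given $u\in\mathscr{S}(\R^d)$ put $\phi=u\circ\mathrm{SP}^{-1}$ on the interior; it is smooth there, and it remains to see that it extends to an element of $\dot{\mathscr{C}}^\infty(\mathbb{S}_+^d)$. Near $\partial\mathbb{S}_+^d$, writing $\phi(\rho,\omega)=u(z(\rho,\omega))$, each application of $\partial_\rho^{k}\partial_\omega^{l}$ produces, after also differentiating the coefficients $\partial_\rho|z|$, $\partial_\omega z$, etc., a finite sum of terms $\langle z\rangle^{a}\,(\partial_z^m u)(z)$ with $a=a_{k,l,m}$ bounded in terms of $k,l,m$; since $u$ is Schwartz, $(\partial_z^m u)(z)=O(\langle z\rangle^{-M})=O(\rho^{M})$ for every $M$, so every $(\rho,\omega)$-derivative of $\phi$ tends to $0$ as $\rho\to0^+$. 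By the standard fact that a function smooth on an open half-space all of whose partial derivatives extend continuously to the closed half-space is itself smooth up to the boundary, $\phi$ extends to $\mathscr{C}^\infty(\mathbb{S}_+^d)$; the vanishing of all those boundary limits gives $\phi\in\dot{\mathscr{C}}^\infty(\mathbb{S}_+^d)$, and the same estimates bound its $\dot{\mathscr{C}}^\infty$-seminorms by finitely many Schwartz seminorms of $u$, hence $(\mathrm{SP}^*)^{-1}$ is continuous.

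Combining these steps, $\mathrm{SP}^*$ is a continuous linear bijection $\dot{\mathscr{C}}^\infty(\mathbb{S}_+^d)\to\mathscr{S}(\R^d)$ with continuous inverse, hence a topological isomorphism of Fréchet spaces. \textbf{The main obstacle} is the bookkeeping in the two directions: one must check that the powers of $\langle z\rangle$ (equivalently $\rho^{-1}$) generated by repeatedly differentiating $\mathrm{SP}$ and $\mathrm{SP}^{-1}$ are always finite, hence beaten by the available \emph{infinite order} vanishing and decay --- this is exactly where the definitions of $\dot{\mathscr{C}}^\infty(\mathbb{S}_+^d)$ and of $\mathscr{S}(\R^d)$ are used --- together with the routine, but not completely formal, passage from ``all partial derivatives have continuous boundary limits'' to genuine smoothness up to the boundary. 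Alternatively, one can deduce the statement by transplanting the classical identification of $\mathscr{S}(\R^d)$ with the functions vanishing to infinite order at the boundary of the radial compactification $\mathbb{B}^d$, via the diffeomorphism relating $\mathbb{B}^d$ and $\mathbb{S}_+^d$.
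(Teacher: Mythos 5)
Your argument is correct, and it is essentially the standard one. Note, though, that the paper does not prove this proposition at all: it appears in the appendix as a recalled fact of the scattering calculus, with the reader referred to \cite{ME} and \cite{CDS19}. So there is no ``paper proof'' to deviate from; what you supply is the classical computation underlying those references. Your two directions are sound: the identification $\mathrm{SP}^*\rho_{\mathbb{S}_+^d}=\langle\cdot\rangle^{-1}$, the derivative bounds $\partial_{z_i}\rho=O(\rho^2)$, $\partial_{z_i}\omega_j=O(\rho)$ (which in fact show that pullbacks of $\mathscr{C}^\infty(\mathbb{S}_+^d)$ are symbols of order $0$, gaining one power of $\langle z\rangle^{-1}$ per derivative), and the factorisation $\phi=\rho^N\psi_N$ coming straight from the definition $\dot{\mathscr{C}}^\infty=\bigcap_k\rho^k\mathscr{C}^\infty$ give the inclusion into $\mathscr{S}(\R^d)$ with seminorm control; conversely, the finiteness of the powers $\langle z\rangle^{a_{k,l,m}}$ produced by differentiating $\mathrm{SP}^{-1}$, beaten by the rapid decay of $u$ and its derivatives, combined with the standard lemma that continuous boundary limits of all partial derivatives yield smoothness up to the boundary, gives surjectivity and continuity of the inverse. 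The only points you leave at sketch level --- the inductive bookkeeping of the symbol estimates and the boundary-extension lemma --- are routine and correctly identified by you as the crux, so I see no genuine gap.
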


We can now illustrate the scattering calculus of pseudo-differential operators on scattering manifolds. Following \cite{ME},
we give the definition in $\mathbb{S}_+^d$ (which is the compactification of $\R^d$) and then for a general scattering manifold $X$.

\begin{defn}
The class $\Psi_{\scc}^{m, \mu}(\mathbb{S}_+^d)$ consists of operators $A$ on $\dot{\mathscr{C}}^\infty(\mathbb{S}_+^d)$ such that the conjugated operator
\[
A'(\phi) = \mathrm{SP}^*(A(\mathrm{SP}_{*} \phi))
\]
belongs to $\Op(S^{m,\mu}(\mathbb{R}^d))$. The subscript ``scc'' stands for \emph{scattering conormal}.
\end{defn}

Applying stereographic compactification to both factors yields
\[
\mathrm{SP}_2: \mathbb{R}^d \times \mathbb{R}^d \to \mathbb{S}_+^d \times \mathbb{S}_+^d.
\]
Denote by $\rho_N$ and $\rho_\sigma$ the respective boundary defining functions in the two factors.

\begin{defn}
Consider the space $\mathcal{V}_{\sccl}(X) = \rho \mathcal{V}_b(X)$ of smooth vector fields which are the product of the boundary defining function $\rho$ and a smooth vector field on $X$ which is tangent to the boundary. We shall denote by $\mathrm{Diff}^*_{\sccl}(X)$ the enveloping algebra of $\mathcal{V}_{\sccl}(X)$, meaning the ring of operators on $\mathscr{C}^{\infty}(X)$ generated by $\mathcal{V}_{\sccl}(X)$ and $\mathscr{C}^{\infty}(X)$.
\end{defn}

\begin{rem}
In a collar neighbourhood of the boundary we have that $\mathcal{V}_{\sccl}(X)$ is spanned by $\rho^2 D_{\rho}$ and $\rho \mathcal{V}(\partial X)$, where $\mathcal{V}(\partial X)$ denotes the space of all smooth vector fields on $\partial X$.
\end{rem}

\begin{defn}
Let $\mathcal{V}_\mathrm{b}(\mathbb{S}_+^d \times \mathbb{S}_+^d)$ be the Lie algebra of vector fields tangent to all boundary hypersurfaces. The $L^\infty$-based conormal space of multi-order $(m, \mu)$ is defined as
\[
\mathcal{A}^{m,\mu} = \left\{ u \in \rho_N^{-\mu} \rho_\sigma^{-m} L^\infty : D u \in \rho_N^{-\mu} \rho_\sigma^{-m} L^\infty, \forall D \in \mathrm{Diff}^*_\mathrm{b} \right\}.
\]
\end{defn}

\begin{prop}
The pullback of conormal functions satisfies
\[
a \in S^{m,\mu}(\mathbb{R}^d \times \mathbb{R}^d) \iff a \in \mathrm{SP}_2^* \mathcal{A}^{m,\mu}.
\]
\end{prop}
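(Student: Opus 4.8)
My plan is to transport the whole equivalence $a\in S^{m,\mu}(\R^d\times\R^d)\iff a\in\mathrm{SP}_2^*\mathcal{A}^{m,\mu}$ to $\R^d\times\R^d$ through the diffeomorphism $\mathrm{SP}_2$ onto the interior $(\mathbb{S}_+^d\times\mathbb{S}_+^d)^\circ$, and to reduce it to an equivalence between two families of pointwise estimates, one of which is precisely the definition of $S^{m,\mu}$. First I would record three elementary facts. (i) Since $\langle z\rangle^{-1}$ is comparable, uniformly on the compact manifold, to any boundary defining function, the pulled-back weighted space $\mathrm{SP}_2^*(\rho_N^{-\mu}\rho_\sigma^{-m}L^\infty)$ coincides with $\langle x\rangle^{\mu}\langle\xi\rangle^{m}L^\infty(\R^d\times\R^d)$. (ii) Using the relation $\mathcal{V}_{\sccl}=\rho\,\mathcal{V}_b$ and the standard fact that $\partial_{z_1},\dots,\partial_{z_d}$ form a global frame of the scattering tangent bundle of $\R^d$, one sees that the pullback of $\mathcal{V}_b(\mathbb{S}_+^d)$ is the $\mathscr{C}^\infty(\mathbb{S}_+^d)$-module generated by $\langle z\rangle\partial_{z_1},\dots,\langle z\rangle\partial_{z_d}$; hence the pullback of $\mathrm{Diff}^*_b(\mathbb{S}_+^d\times\mathbb{S}_+^d)$ is the algebra generated over $\mathscr{C}^\infty(\mathbb{S}_+^d\times\mathbb{S}_+^d)$ by $V_j=\langle x\rangle\partial_{x_j}$ and $W_k=\langle\xi\rangle\partial_{\xi_k}$. (iii) Commuting the weights $\langle x\rangle,\langle\xi\rangle$ through these generators, every such operator $D$ of total order $N$ can be put in the normal form
\[
 D=\sum_{|\alpha|+|\beta|\le N}c_{\alpha\beta}(x,\xi)\,\langle x\rangle^{|\alpha|}\langle\xi\rangle^{|\beta|}\,\partial_x^{\alpha}\partial_\xi^{\beta},
\]
with the $c_{\alpha\beta}$ smooth and bounded on $\R^d\times\R^d$ (being pullbacks of smooth functions on the compact double space), and conversely, inverting this relation,
\[
 \partial_x^{\alpha}\partial_\xi^{\beta}=\langle x\rangle^{-|\alpha|}\langle\xi\rangle^{-|\beta|}\sum_{|\alpha'|\le|\alpha|,\ |\beta'|\le|\beta|} b_{\alpha'\beta'}(x,\xi)\,V^{\alpha'}W^{\beta'},
\]
again with $b_{\alpha'\beta'}$ smooth and bounded.

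With (i)--(iii) in hand, both implications become short. If $a\in S^{m,\mu}$, I would apply a normal-form operator $D$ and combine the defining estimates $|\partial_x^{\alpha}\partial_\xi^{\beta}a|\le C_{\alpha\beta}\langle x\rangle^{\mu-|\alpha|}\langle\xi\rangle^{m-|\beta|}$ with the boundedness of the $c_{\alpha\beta}$ to obtain $|Da|\le C\langle x\rangle^{\mu}\langle\xi\rangle^{m}$; as $D$ ranges over the pulled-back $\mathrm{Diff}^*_b$, fact (i) gives $a\in\mathrm{SP}_2^*\mathcal{A}^{m,\mu}$. Conversely, if $a\in\mathrm{SP}_2^*\mathcal{A}^{m,\mu}$, then $|V^{\alpha'}W^{\beta'}a|\le C\langle x\rangle^{\mu}\langle\xi\rangle^{m}$ for all $\alpha',\beta'$, since these operators lie in the pulled-back $\mathrm{Diff}^*_b$, and feeding this into the inverse normal form yields $|\partial_x^{\alpha}\partial_\xi^{\beta}a|\le C\langle x\rangle^{\mu-|\alpha|}\langle\xi\rangle^{m-|\beta|}$; the smoothness of $a$ on $\R^d\times\R^d$ comes for free, since in the interior $\mathcal{V}_b$ contains all smooth vector fields, so conormality forces every distributional derivative of $a$ to be locally bounded, hence $a\in\mathscr{C}^\infty$. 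One may alternatively first note $\mathcal{A}^{m,\mu}=\rho_N^{-\mu}\rho_\sigma^{-m}\mathcal{A}^{0,0}$ and $S^{m,\mu}=\langle x\rangle^{\mu}\langle\xi\rangle^{m}S^{0,0}$ and reduce to the case $m=\mu=0$.

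The step I expect to be the main obstacle is (ii)--(iii) near the corner $\partial\mathbb{S}_+^d\times\partial\mathbb{S}_+^d$: one must verify that a vector field tangent to \emph{both} boundary hypersurfaces pulls back to a combination of the $V_j$ and $W_k$ with coefficients smooth \emph{up to the corner}, so that the weights $\langle x\rangle$ and $\langle\xi\rangle$ appear in the normal form simultaneously, each with its correct power, rather than only in the separate ``ends''. I would handle this by passing to product boundary coordinates $(\rho_N,y,\rho_\sigma,\eta)$ and invoking the one-factor description of $\mathcal{V}_b$ on $\mathbb{S}_+^d$ (as in \cite{ME}) in each group of variables separately, the product structure of $\mathbb{S}_+^d\times\mathbb{S}_+^d$ decoupling the two halves; everything else is routine bookkeeping with the Leibniz rule and the boundedness of symbols of order $0$.
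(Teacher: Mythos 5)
The paper itself offers no proof of this proposition: it appears in the appendix as a quoted fact of Melrose's scattering calculus, with \cite{ME} and \cite{CDS19} given as references, so there is no internal argument to compare yours against. Your proposal is the standard proof of this identification and is essentially correct: (i) any boundary defining function pulls back to something comparable to $\langle x\rangle^{-1}$ (resp.\ $\langle \xi\rangle^{-1}$), so the weighted $L^\infty$ spaces match; (ii) since $\partial_{z_1},\dots,\partial_{z_d}$ frame the scattering tangent bundle and $\mathcal{V}_{\sccl}=\rho\,\mathcal{V}_b$, the pullback of $\mathcal{V}_b(\mathbb{S}_+^d)$ is generated over $\mathscr{C}^\infty(\mathbb{S}_+^d)$ by $\langle z\rangle\partial_{z_j}$, and the product structure handles the corner exactly as you say (in coordinates $(\rho_N,y,\rho_\sigma,\eta)$ the b-fields are spanned by $\rho_N\partial_{\rho_N},\partial_y,\rho_\sigma\partial_{\rho_\sigma},\partial_\eta$, i.e.\ by the lifts from the two factors); (iii) the commutation to normal form converts the conormal estimates into the $S^{m,\mu}$ estimates and back, and your remark that interior smoothness follows from iterated regularity plus Sobolev embedding closes the converse. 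Two points you should state explicitly to make the argument airtight: first, in the forward direction you need that every $D\in\mathrm{Diff}^*_b(\mathbb{S}_+^d\times\mathbb{S}_+^d)$ can be written as a finite sum $\sum f_{\alpha'\beta'}\,V^{\alpha'}W^{\beta'}$ with $f_{\alpha'\beta'}$ smooth on the compact double space; this uses that a b-vector field applied to such an $f$ is again such an $f$ and that commutators of b-fields are b-fields, so all coefficients can be pushed to the left. Second, in the normal-form bookkeeping it suffices that the coefficients $c_{\alpha\beta}$, $b_{\alpha'\beta'}$ are bounded, which is immediate because they are built from $x_j/\langle x\rangle$, $\xi_k/\langle\xi\rangle$, $\langle x\rangle^{-1}$, $\langle\xi\rangle^{-1}$; the stronger claim that they are pullbacks of smooth functions on the compactification (i.e.\ classical order-zero symbols) is true but not needed for the equivalence.
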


\begin{defn}
The space $\Psi_{\sccl}^{m, \mu}(\mathbb{S}_+^d) \subset \Psi_{\scc}^{m, \mu}(\mathbb{S}_+^d)$ consists of operators whose Schwartz kernels lie in $\mathrm{SP}^*(\rho_N^{-\mu} \rho_\sigma^{-m} \mathscr{C}^\infty)$. These are called \emph{classical scattering pseudodifferential operators}.
\end{defn}

\begin{rem}
One has the identification
\[
\Psi_{\sccl}^{m,\mu}(\mathbb{S}_+^d) = (\mathrm{SP}^*)^{-1} \circ \Op(S_{\mathrm{cl}}^{m,\mu}) \circ \mathrm{SP}^*,
\]
where $S^{m,\mu}_{\mathrm{cl}}\subset S^{m,\mu}$ denotes the subclass of classical SG symbols (see, e.g., \cite{ABC25,BC11,CD21,MSS06,NR}).
\end{rem}

\begin{defn}
Let $\mathcal{V}_{\mathrm{b}}(X)$ be the Lie algebra of all smooth vector fields on $X$ which are tangent to the boundary. Then, we define the Lie algebra 
$$
\mathcal{V}_{\sccl}(X)=\rho \mathcal{V}_{\mathrm{b}}(X),
$$
whose structure bundle is denoted by
$$
{}^{\sccl} T X=1 / \rho \cdot{ }^{\mathrm{b}} \mathrm{T} X.
$$
${}^{\sccl} T ^{\ast}X$ will denote the dual of ${}^{\sccl} T X$.
\end{defn}

\begin{defn}
We define $^{\sccl} \bar{T}^{*} X$ as the compact manifold with corners of codimension two obtained by radial compactification of the fibres of ${ }^{\sccl} T^{*} X$. Denoting its boundary $\partial({ }^{\sccl} \bar{T}^{*} X)$ by $C_{\sccl}X$, we can see that it consists of two smooth manifolds with boundary, namely
$$
C_{\sccl}X={ }^{\sccl} S^{*} X \cup {}^{\sccl} \bar{T}_{\partial X}^{*} X,
$$
which meet at their boundaries, with a natural identification
$$
{ }^{\sccl} S^{*} X \cap {}^{\sccl} \bar{T}_{\partial X}^{*} X={ }^{\sccl} S_{\partial X}^{*} X,
$$
where ${ }^{\operatorname{sc}} S_{p}^{*} X$ is the sphere
$$
{ }^{\mathrm{s} c} S_{p}^{*} X=\left({ }^{\sccl} T_{p}^{*} X \backslash 0\right) / \mathbb{R}^{+},
$$
which represents the sphere at infinity of the radial compactification of the fibre. 
\end{defn}

In order to extend Definition \ref{def:psix} to a general scattering manifold, we proceed in the canonical way, that is, by cut-off and partition of unity. We will employ the notation ``sk'' meaning both ``sc'' and ``scc''. 

\begin{defn}\label{def:psix}
Let $X$ be a scattering manifold. Fix a covering of $X$ by coordinate systems $F: O \longleftrightarrow$ $O^{\prime} \subset \mathbb{S}_{+}^{d}$ where $O \subset X$ and $O^{\prime}$ are relatively open sets and $F$ is a diffeomorphism of manifolds with boundary. Moreover, let $m, \mu \in \mathbb{R}$. Then, the spaces $\Psi_{\mathrm{sk}}^{m, \mu}(X)$ consists of those linear operators $A: \dot{\mathscr{C}}^{\infty}(X) \longrightarrow \dot{\mathscr{C}}^{\infty}(X)$ such that:
\begin{enumerate}
\item if $\varphi, \psi \in \mathscr{C}^{\infty}(X)$ have disjoint supports, then $\varphi A \psi \in \Psi_{\mathrm{sk}}^{-\infty, -\infty}(X)$;
\item if $F: O \longrightarrow O^{\prime} \subset \mathbb{S}_{+}^{d}$ is a coordinates patch with $\varphi \in \mathscr{C}^{\infty}(X), \psi \in \mathscr{C}^{\infty}\left(\mathbb{S}_{+}^{d}\right)$ such that $\operatorname{supp}(\varphi) \subset O, \operatorname{supp}(\psi) \subset O^{\prime}$, then
\begin{equation}
A_{F, \psi, \varphi}=(F^{-1})^{*} \circ \psi \cdot A \circ F^{*} \circ \varphi \cdot \in \Psi_{\mathrm{sk}}^{m, \mu}(\mathbb{S}_{+}^{d}).
\end{equation}
\end{enumerate}
\end{defn}

The spaces $\Psi_{\mathrm{sk}}^{m, \mu}(X)$ have a number of properties: below, we state the most relevant.
\begin{prop}
The spaces $\Psi_{\mathrm{sk}}^{m, \mu}(X)$ form a bi-filtered algebra, that is
$$
\Psi_{\mathrm{sk}}^{m_{1}, \mu_{1}}(X) \circ \Psi_{\mathrm{sk}}^{m_{2}, \mu_{2}}(X) \subset \Psi_{\mathrm{sk}}^{m_{1}+m_{2}, \mu_{1}+\mu_{2}}(X), \quad m_1,m_2,\mu_1,\mu_2 \in \R.
$$
Furthermore, they contain the scattering differential operators:
$$
\operatorname{Diff}_{\sccl}^{m}(X) \subset \Psi_{\sccl}^{m, 0}(X), \quad m \in \mathbb{N}_{0}.
$$
There is also a basic conjugation-invariance under multiplication by powers of the boundary defining functions $\rho \in \mathscr{C}^{\infty}(X)$:
$$
\rho^{-M} \cdot \Psi_{\mathrm{sk}}^{m, \mu}(X)=\Psi_{\mathrm{sk}}^{m, \mu}(X) \cdot \rho^{-M}=\Psi_{\mathrm{sk}}^{m, \mu+M}(X).
$$
\end{prop}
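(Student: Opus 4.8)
The plan is to reduce all three assertions to the local model on $\mathbb{S}_+^d$ and then glue. Recall that, by the identification $\Psi_{\mathrm{sk}}^{m,\mu}(\mathbb{S}_+^d)=(\mathrm{SP}^*)^{-1}\circ\Op(S^{m,\mu})\circ\mathrm{SP}^*$ stated above, on $\mathbb{S}_+^d$ everything follows from well known facts about the SG-calculus on $\mathbb{R}^d$: the composition rule $\Op(S^{m_1,\mu_1})\circ\Op(S^{m_2,\mu_2})\subset\Op(S^{m_1+m_2,\mu_1+\mu_2})$, together with the usual asymptotic formula for the composed symbol; the invariance of the SG-classes under the admissible changes of coordinates on $\mathbb{S}_+^d$; and the fact that $\Psi_{\mathrm{sk}}^{-\infty,-\infty}$ is a two-sided ideal in the algebra of linear operators on $\dot{\mathscr{C}}^\infty$. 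The transition from $\mathbb{S}_+^d$ to a general scattering manifold $X$ is then performed through the fixed coordinate cover $F\colon O\to O'\subset\mathbb{S}_+^d$ of Definition \ref{def:psix} and suitable auxiliary cutoffs subordinate to it.

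For the algebra property, given $A\in\Psi_{\mathrm{sk}}^{m_1,\mu_1}(X)$ and $B\in\Psi_{\mathrm{sk}}^{m_2,\mu_2}(X)$, I would verify conditions (1) and (2) of Definition \ref{def:psix} for $C:=AB$. For (1), if $\varphi,\psi\in\mathscr{C}^\infty(X)$ have disjoint supports, I pick $\chi\in\mathscr{C}^\infty(X)$ equal to $1$ near $\supp\varphi$ with $\supp\chi\cap\supp\psi=\varnothing$, and write $\varphi C\psi=\varphi A\,(\chi B\psi)+(\varphi A(1-\chi))\,B\psi$; by condition (1) for $B$ and for $A$ respectively, $\chi B\psi$ and $\varphi A(1-\chi)$ lie in $\Psi_{\mathrm{sk}}^{-\infty,-\infty}(X)$, hence so does $\varphi C\psi$ by the ideal property. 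For (2), I localise $C_{F,\psi,\varphi}$ and, inserting an auxiliary cutoff supported in $O$ as above, write it, modulo a term in $\Psi_{\mathrm{sk}}^{-\infty,-\infty}$ (treated by step (1) and the ideal property), as a product of two local SG operators transported to $\mathbb{S}_+^d$ via $F$; the local composition rule then places it in $\Psi_{\mathrm{sk}}^{m_1+m_2,\mu_1+\mu_2}(\mathbb{S}_+^d)$. This yields $C\in\Psi_{\mathrm{sk}}^{m_1+m_2,\mu_1+\mu_2}(X)$.

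The remaining two statements are then short. For $\operatorname{Diff}_{\sccl}^{m}(X)\subset\Psi_{\sccl}^{m,0}(X)$, I would first check that a single scattering vector field $V\in\mathcal{V}_{\sccl}(X)$ and a single $f\in\mathscr{C}^\infty(X)$ lie in $\Psi_{\sccl}^{1,0}(X)$ and $\Psi_{\sccl}^{0,0}(X)$, respectively: condition (1) is immediate (since $\varphi V\psi=0$ when the supports are disjoint), while in a chart $V$ is pushed to a scattering vector field on $\mathbb{S}_+^d$, and under $\mathrm{SP}$ the space $\mathcal{V}_{\sccl}(\mathbb{S}_+^d)$ becomes the $\mathscr{C}^\infty$-span of $\partial_{z_1},\dots,\partial_{z_d}$, whose symbols belong to $S^{1,0}(\mathbb{R}^d)$, whereas $f$ localises to a symbol in $S^{0,0}$. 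Since $\operatorname{Diff}_{\sccl}^{m}(X)$ is by definition spanned over $\mathscr{C}^\infty(X)$ by products of at most $m$ elements of $\mathcal{V}_{\sccl}(X)$, the algebra property already established gives the inclusion. Finally, a boundary defining function $\rho$ localises, under $\mathrm{SP}$, to a smooth positive function comparable to $\langle z\rangle^{-1}$ near the boundary, so multiplication by $\rho^{\mp M}$ defines an operator in $\Psi_{\mathrm{sk}}^{0,\pm M}(X)$; the algebra property then gives $\rho^{-M}\cdot\Psi_{\mathrm{sk}}^{m,\mu}(X)\subset\Psi_{\mathrm{sk}}^{m,\mu+M}(X)$ and $\Psi_{\mathrm{sk}}^{m,\mu}(X)\cdot\rho^{-M}\subset\Psi_{\mathrm{sk}}^{m,\mu+M}(X)$, while writing $A=\rho^{-M}(\rho^{M}A)$ with $\rho^{M}A\in\Psi_{\mathrm{sk}}^{0,-M}\circ\Psi_{\mathrm{sk}}^{m,\mu+M}\subset\Psi_{\mathrm{sk}}^{m,\mu}(X)$, and symmetrically on the right, yields the reverse inclusions, hence the claimed equalities.

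I expect the genuine obstacle to be the gluing step in the algebra property: controlling the contributions of coordinate patches that overlap only partially, which forces one to use both the two-sided ideal property of $\Psi_{\mathrm{sk}}^{-\infty,-\infty}(X)$ and the invariance of the local SG-class under admissible diffeomorphisms of $\mathbb{S}_+^d$; everything else is either a direct symbolic computation on $\mathbb{R}^d$ or a bookkeeping exercise with cutoffs. As noted above, since these are standard facts of the scattering calculus, in the paper one may also simply refer to \cite{ME} and \cite{CDS19}.
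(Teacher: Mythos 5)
Your argument is correct, and it matches the paper's treatment: the paper states this proposition in the Appendix as a standard fact of the scattering calculus, without proof, referring to \cite{ME} and \cite{CDS19}, and the route you sketch (identification of $\Psi_{\mathrm{sk}}^{m,\mu}(\mathbb{S}_+^d)$ with $\Op(S^{m,\mu})$ via $\mathrm{SP}$, coordinate invariance of the SG classes, the two-sided ideal property of $\Psi_{\mathrm{sk}}^{-\infty,-\infty}$, and gluing through the charts of Definition \ref{def:psix}) is exactly the standard argument behind those references. The only point worth making explicit, if one were to write it out in full, is that the ideal property and the treatment of the off-diagonal terms rest on the characterisation of the residual class by kernels in $\dot{\mathscr{C}}^\infty(X\times X)$ together with the mapping properties of scattering operators on the weighted Sobolev scale, which is again supplied by \cite{ME}.
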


\begin{defn}
A smooth map $\Psi: Y \to Z$ between compact manifolds with boundary is a \emph{scattering map (sc-map)} if for any $a \in \rho_Z^{-m} \mathscr{C}^\infty(Z)$, one has
\begin{enumerate}
    \item $\Psi^* a \in \rho_Y^{-m} \mathscr{C}^\infty(Y)$;
    \item positivity of $\rho_Z^m a$ at a point implies positivity of $\rho_Y^m \Psi^* a$ at preimages.
\end{enumerate}
\end{defn}

\begin{prop}
A local map $\Psi: U \subset Y \to V \subset Z$ is a local sc-map if and only if there exists $h \in \mathscr{C}^\infty(Y)$, $h > 0$, such that
\[
\Psi^* \rho_Z = \rho_Y h.
\]
\end{prop}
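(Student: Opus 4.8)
The plan is to establish the two implications of the equivalence separately, in each case simply unwinding the definition of an sc-map; neither direction is computationally heavy, the whole point being the choice of the right test functions $a$ against which to evaluate the definition.

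First I would treat \emph{sufficiency}. Assume the factorisation $\Psi^*\rho_Z=\rho_Y h$ with $h\in\mathscr{C}^\infty(Y)$, $h>0$, and verify the two defining properties directly. Given $a\in\rho_Z^{-m}\mathscr{C}^\infty(Z)$, write $a=\rho_Z^{-m}b$ with $b\in\mathscr{C}^\infty(Z)$; then $\Psi^*a=(\Psi^*\rho_Z)^{-m}\Psi^*b=\rho_Y^{-m}\,h^{-m}\,\Psi^*b$, and since $h>0$ the factor $h^{-m}=\exp(-m\log h)$ is smooth, so $\Psi^*a\in\rho_Y^{-m}\mathscr{C}^\infty(Y)$, which is the first property. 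For the positivity property I would observe that $\rho_Y^m\Psi^*a=h^{-m}\cdot\Psi^*b$ while $\rho_Z^m a=b$, so that positivity of $b$ at a point $\Psi(y_0)$ together with $h^{-m}(y_0)>0$ yields positivity of $\rho_Y^m\Psi^*a$ at $y_0$.

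Next I would treat \emph{necessity} by feeding the definition the single function $a=\rho_Z$, which belongs to $\rho_Z\mathscr{C}^\infty(Z)=\rho_Z^{-m}\mathscr{C}^\infty(Z)$ for $m=-1$. The first defining property then forces $\Psi^*\rho_Z\in\rho_Y\mathscr{C}^\infty(Y)$, i.e. $\Psi^*\rho_Z=\rho_Y h$ for some $h\in\mathscr{C}^\infty(Y)$, uniquely determined locally because $\rho_Y$ vanishes exactly to first order on $\partial Y$ and is positive elsewhere. Applying the positivity property with the same $a$ and $m$, I would note that $\rho_Z^m a=\rho_Z^{-1}\rho_Z$ is the constant function $1$, positive at every point of $Z$, so that $\rho_Y^m\Psi^*a=\rho_Y^{-1}\Psi^*\rho_Z=h$ must be positive at every $\Psi$-preimage; since in the local picture each point $y_0$ of the domain is a preimage of $\Psi(y_0)$, this gives $h>0$ throughout, which is the claimed factorisation.

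The one thing to get right — and the only genuine obstacle — is the \emph{strict} positivity $h>0$ rather than merely smoothness of $h$ with $h\ge 0$: this is precisely what the positivity clause in the definition of an sc-map delivers, once it is applied to the tautologically positive quotient $\rho_Z^{-1}\cdot\rho_Z\equiv 1$. I would also be careful about two conventions the argument uses: that the conormal order $m$ is permitted to be negative (equivalently, that $\rho_Z\mathscr{C}^\infty(Z)$ is an admissible test space), and that working in a local collar neighbourhood both legitimises dividing by $\rho_Y$ and guarantees the uniqueness of $h$.
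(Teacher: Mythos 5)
Your argument is correct, and note that the paper itself does not prove this proposition: it appears in the appendix as a recalled fact from the scattering-calculus literature (Melrose \cite{ME}, \cite{CDS19}), so there is no internal proof to compare against; your unwinding of the definition is exactly the standard argument. Both implications are sound: sufficiency is the computation $\Psi^*a=\rho_Y^{-m}h^{-m}\Psi^*b$ with $h^{-m}$ smooth because $h>0$, and necessity correctly extracts smoothness of $h$ from the conormal-order clause and strict positivity of $h$ from the positivity clause applied to the identically positive quotient $\rho_Z^{m}a\equiv 1$, using that every point of $U$ is a preimage of its own image. The only conventional dependence is the one you flagged yourself, namely testing with $a=\rho_Z$ at the negative order $m=-1$; if one prefers not to rely on negative orders being admissible, the same argument runs with $a=\rho_Z^{-1}$ and $m=1$: clause (1) gives $\Psi^*\rho_Z^{-1}=\rho_Y^{-1}g$ with $g\in\mathscr{C}^\infty(Y)$, clause (2) applied to $\rho_Z\cdot\rho_Z^{-1}\equiv 1$ gives $g>0$ at every preimage, and then $h=1/g$ yields $\Psi^*\rho_Z=\rho_Y h$ with $h>0$ smooth, so no genuine gap remains either way. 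Your remark that the factorisation forces $\Psi^{-1}(\partial Z)\cap U\subset\partial Y$, so that pullbacks of interior-defined conormal functions make sense and $h$ is uniquely determined as the smooth extension of $\Psi^*\rho_Z/\rho_Y$, is the right thing to keep explicit when working locally.
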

	
\bibliographystyle{plain} %

\begin{thebibliography}{2000}
	    
	\bibitem{ACS19b} A. Ascanelli, S. Coriasco, A. S{\"u}\ss,
	\newblock{\em Solution theory to semilinear hyperbolic stochastic partial differential equations with polynomially bounded coefficients,} 
	\newblock{ Nonlinear Anal. Theory Methods Appl.} \textbf{189} (2019), 111--574.

	\bibitem{ABC25}  F. de \'Avila Silva, M. Bonino, S. Coriasco, 
	{\em 	 Global hypoellipticity and solvability for a class of evolution operators in time-periodic weighted Sobolev spaces}, 
	Preprint, arXiv:2501.03414 (submitted, 2025).
	
	\bibitem{AC24}  F. de \'Avila Silva, M. Cappiello, 
	{\em Globally solvable time-periodic evolution equations in Gelfand-Shilov classes}, 
	Math. Ann. (2024).

	\bibitem{AC} {F. de \'Avila Silva, M. Cappiello,} 
	{\em Time-periodic Gelfand-Shilov spaces and global hypoellipticity on $\T\times\R^n$}, 
	J. Funct. Anal. n. 9 (2022), 109418.
	
	\bibitem{AGK18} F. de \'Avila Silva, T. V. Gramchev, A. Kirilov,
	{\em Global Hypoellipticity for  First-Order Operators on Closed Smooth Manifolds,}
	{{ J. Anal. Math.}}, {\bf 135} (2018), 527--573. 
	
	\bibitem{BC11} U. Battisti, S. Coriasco,
	{\em Wodzicki residue for operators on manifolds with cylindrical ends},
	Ann. Global Anal. Geom. \textbf{40}, n. 2 (2011), 223--249.
	
	\bibitem{BER} A. P. Bergamasco, 
	{\em Perturbations of globally hypoelliptic operators}, 
	J. Differential Equations, \textbf{41}, n. 2 (1994), 513--526.

	\bibitem{BDGK15} {A. P. Bergamasco, P. L. Dattori da Silva, R. B. Gonzalez, A. Kirilov,} 
	{\em Global solvability and global hypoellipticity for a class of complex vector fields on the 3-torus}, 
	J. Pseudo-Diff. Oper. Appl. \textbf{6}, n. 3 (2015), 341--360.
	
	\bibitem{BDG17} {A. P. Bergamasco, P. L. Dattori da Silva, R. B. Gonzalez,} 
	{\em Existence and regularity of periodic solutions to certain first-order partial differential equations.}, 
	J. Fourier Anal. Appl. \textbf{23}  n. 1 (2017), 65--90.
	
	\bibitem{BCG21} {A. P. Bergamasco, M. M. Cavalcanti, R. B. Gonzalez,} 
	{\em Existence and regularity of periodic solutions for a class of partial differential operators}, 
	J. Fourier Anal. Appl. \textbf{27} (2021), n. 52.
	
	\bibitem{CGPR} {M. Cappiello, T. Gramchev, S. Pilipovi\'c, L. Rodino,} 
	{\em Anisotropic Shubin operators and eigenfunction expansions in Gelfand-Shilov spaces}, 
	J. Anal. Math. \textbf{138}, n. 2 (2019), 857--870.
		
	\bibitem{cordes} H.~O. Cordes, 
	{The technique of pseudodifferential operators}, Cambridge University Press, vol. 202 (1995). 

\bibitem{CDS19} S. Coriasco, M. Doll, R. Schulz,
	{\em Lagrangian distributions on asymptotically Euclidean manifolds},
	Annali di Matematica Pura ed Applicata (1923-) (2019), 1731--1780.
	
	\bibitem{CD21} S. Coriasco, M. Doll,
	{\em Weyl law on asymptotically Euclidean manifolds},
	Ann. Henri Poincar\'e \textbf{22} (2021), 447--486.
	
	\bibitem{CorMan13} S. Coriasco, L. Maniccia,
	{\em On the spectral asymptotics of operators on manifolds with ends},
	Abstr. Appl. Anal. (2013), 909782.
	
	\bibitem{CPS1} S. Coriasco, S. Pilipovi\' c, D. Sele\v si, 
	{\em Solutions of Hyperbolic Stochastic PDEs on Bounded and Unbounded Domains},
	J. Fourier Anal. Appl. \textbf{27} (2021), 77--118.
	
	\bibitem{DaR} A. Dasgupta, M. Ruzhansky, 
	{\em Eigenfunction expansions of ultradifferentiable functions and ultradistributions}, 
	Trans. Amer. Math. Soc. \textbf{368}, n. 12 (2016), 81--101.

	\bibitem{DR} J.	Delgado, M. Ruzhansky, 
	{\em Fourier multipliers, symbols and nuclearity on compact manifolds}, 
	JAMA \textbf{135} (2018), 757--800.
	
	\bibitem{DGY} D. Dickinson, T. Gramchev, M. Yoshino, 
	{\em First order pseudodifferential operators on the torus: Normal forms, diophantine phenomena and global hypoellipticity}, 
	Ann. Univ. Ferrara, Nuova Ser., Sez. VII, \textbf{41}, 51--64  (1996).

	\bibitem{GPR} {T. Gramchev, S. Pilipovic, L. Rodino}, 
	{\em Eigenfunction expansions in $\mathbb R^{n}$,} 
	Proc. Amer. Math. Soc. \textbf{139}, n. 12 (2011), 4361--4368.
	
	\bibitem{GW72} S. J. Greenfield,  N. R. Wallach, 
	{\em Global hypoellipticity and Liouville numbers}, 
	Proc. Amer. Math. Soc., \textbf{31} (1972), 112--114.

	\bibitem{GW73} S. J. Greenfield,  N. R. Wallach, 
	{\em Remarks on global hypoellipticity}, 
	Trans. Amer. Math. Soc., \textbf{183}, n. 3  (1973), 153--164.
	
	\bibitem{H63} L. Hörmander, 
	\newblock {{Linear Partial Differential Operators}},
	\newblock Springer-Verlag Berlin Heidelberg (1963).
	
	\bibitem{Hou79} {J. Hounie,}
	{\em Globally hypoelliptic and globally solvable first-order evolution equations,}
	{{Trans. Amer. Math. Soc.}} {\bf252} (1979), 233-248.
	
	\bibitem{HouCardoso77} {J. Hounie, F. Cardoso,} 
	{\em Global Solvability of an Abstract Complex}, 
	Trans. Amer. Math. Soc. \textbf{65} (1977), 117--124.

	\bibitem{KMR2020} A. Kirilov, W. A. A. Moraes, M. Ruzhansky, 
	{\em Partial Fourier series on compact Lie groups},  
	Bulletin des Sciences Mathématiques, \textbf{160} (2020), 102853.

	\bibitem{KMR2022} A. Kirilov, W. A. A. Moraes, M. Ruzhansky, 
	{\em Global properties of vector fields on compact Lie groups in Komatsu classes. II. Normal forms},  
	Communications on Pure and Applied Analysis, \textbf{21} (2022), n. 11, 3919--3940. 

	\bibitem{KMR} {A. Kirilov, W. A. A. Moraes, M. Ruzhansky, }
	{\em Global hypoellipticity and global solvability for vector fields on compact lie	groups}, 
	J. Funct. Anal. \textbf{280}, no. 2 (2021), 108806.
	
	\bibitem{KMR1} {A. Kirilov, W. A. A. Moraes, M. Ruzhansky, }
	{\em Global Properties of Vector Fields on Compact Lie Groups in Komatsu Classes}, 
	Z. Anal. Anwend. \textbf{40}, no. 4 (2021), 425--451.
	
	\bibitem{Kothe} {G, K\"othe}, 
	{Topological vector spaces}, Springer Berlin Heidelberg (1983).
	
	\bibitem{ManPan} {L. Maniccia, P. Panarese} 
	{\em Eigenvalue asymptotics for a class of md-elliptic $\psi$do’s on manifolds with cylindrical exits}, 
	{Ann. Mat. Pura Appl.} \textbf{181}, n. 3 (2002), 283--308.

	\bibitem{MSS06} L. Maniccia, E. Schrohe, J. Seiler,
	{\em Complex powers of classical SG-pseudodifferential operators},
	Ann. Univ. Ferrara \textbf{52} (2006), 353--369.

	\bibitem{ME} R. Melrose, 
	{ Geometric Scattering Theory,} Stanford Lectures, Cambridge University Press, Cambridge (1995).

	\bibitem{NR} F. Nicola, L. Rodino, 
	\newblock {{Global pseudo-differential calculus on Euclidean spaces,}}
	\newblock Birkh\"auser, Basel (2010).

	\bibitem{PA72} C. Parenti,
	\newblock {\em Operatori pseudodifferenziali in $\mathbb{R}^n$ e applicazioni},
	\newblock { Ann. Mat. Pura Appl.} \textbf{93} (1972), 359--389.
		
	\bibitem{PK} A. Pedroso Kowacs, 
	{\em Schwartz regularity of differential operators on the cylinder}. 
	Preprint arXiv:2307.12819 (2023).
	
	\bibitem{Pet05} G. Petronilho, 
	{\em Global {$s$}-solvability, global {$s$}-hypoellipticity and  {D}iophantine phenomena}, 
	Indag. Math., \textbf{16}, n. 1 (2005), 67--90.
	
	\bibitem{RodinoGevreyBook} L. Rodino, 
	\newblock{Linear Partial Differential Operators in Gevrey Spaces}, 
	\newblock World Scientific (1993).
	
	\bibitem{schwartz} L. Schwartz,
	\newblock {  {Th{\'e}orie des Distributions}},
	\newblock Hermann, 2nd edition (2010).

	\bibitem{See65} {R.~T. Seeley,}
	\newblock {\em Integro-differential operators on vector bundles},
	\newblock Trans. Amer. Math. Soc. \textbf{117} (1965), 167--204.
		
	\bibitem{See69} {R.~T. Seeley,}
	\newblock {\em Eigenfunction expansions of analytic functions},
	\newblock Proc. Amer. Math. Soc. \textbf{21}, n. 3 (1969), 734--738.
		
	\bibitem{Shubin} {M. Shubin,} 
	{ Pseudodifferential operators and the spectral theory}, Springer Series in Soviet Mathematics,  Springer Verlag, Berlin (1987).
	
	\bibitem{treves} F. Treves, 
	\newblock {{Topological Vector Spaces, Distributions and Kernels}},
	\newblock  Academic Press, 1st Edition - January 1 (1967).
		
	\bibitem{Treves} {F. Treves,} 
	{\em Study of a model in the theory of complexes of pseudodifferential operators}, 
	Ann. Math. \textbf{104}, n. 2 (1976), 269--324.
	
\end{thebibliography}

\end{document}